\numberwithin{equation}{section}
\newcommand{\margnote}[1]{
\ifthenelse{\boolean{shownotes}}%
{\marginpar{\raggedright\tiny\texttt{#1}}}%
{}%
}
\newcommand{\hole}[1]{
\ifthenelse{\boolean{shownotes}}%
{\begin{center} \fbox{ \rule {.25cm}{0cm}
\rule[-.1cm]{0cm}{.4cm} \parbox{.85\textwidth}{\begin{center}
\texttt{#1}\end{center}} \rule {.25cm}{0cm}}\end{center}}
{}
}
\theoremstyle{plain}
\newtheorem{lemma}{Lemma}[section]
\newtheorem{theorem}[lemma]{Theorem}
\newtheorem{corollary}[lemma]{Corollary}
\theoremstyle{definition}
\newtheorem{remark}[lemma]{Remark}
\theoremstyle{remark}
\newcommand{\Id}{\mathrm{Id}}
\newcommand{\R}{\mathbb{R}}
\newcommand{\C}{\mathbb{C}}
\newcommand{\Z}{\mathbb{Z}}
\newcommand{\N}{\mathbb{N}}
\newcommand{\bbS}{\mathbb{S}}
\newcommand{\cO}{{\mathcal{O}}}
\newcommand{\cT}{{\mathcal{T}}}
\newcommand{\tcL}{\widetilde{{\mathcal{L}}}}
\newcommand{\cL}{{\mathcal{L}}}
\newcommand{\cS}{{\mathcal{S}}}
\newcommand{\cA}{{\mathcal{A}}}
\newcommand{\cV}{{\mathcal{V}}}
\newcommand{\cM}{{\mathcal{M}}}
\newcommand{\vep}{\varepsilon}
\renewcommand{\Re}{\mathrm{Re}\,} 
\renewcommand{\Im}{\mathrm{Im}\,}
\newcommand{\ep}{\epsilon}
\newcommand{\ess}{\sigma_\mathrm{\tiny{ess}}}
\newcommand{\ptsp}{\sigma_\mathrm{\tiny{pt}}}
\newcommand{\Ldper}{L^2_\mathrm{\tiny{per}}}
\newcommand{\Htper}{H^3_\mathrm{\tiny{per}}}
\newcommand{\Hmper}{H^m_\mathrm{\tiny{per}}}
\newcommand{\Hsper}{H^s_\mathrm{\tiny{per}}}
\newcommand{\Hsmdper}{H^{s-2}_\mathrm{\tiny{per}}}
\newcommand{\Hper}{H_\mathrm{\tiny{per}}}
\newcommand{\Hrper}{H^r_\mathrm{\tiny{per}}}
\newcommand{\<}{\langle}
\renewcommand{\>}{\rangle}
\begin{document}

\title[Instability of periodic waves for generalized KdV-Burgers equations]{Orbital instability of periodic waves for generalized Korteweg-de Vries-Burgers equations with a source}

\author[A. Naumkina]{Anna Naumkina}

\address{{\rm (A. Naumkina)} Departamento de Matem\'aticas y Mec\'anica\\Instituto de 
Investigaciones en Matem\'aticas Aplicadas y en Sistemas\\Universidad Nacional Aut\'onoma de 
M\'exico\\ Circuito Escolar s/n, Ciudad Universitaria, C.P. 04510\\Cd. de M\'{e}xico (Mexico)}

\email{naumkinaanna75@gmail.com}

\author[R. G. Plaza]{Ram\'on G. Plaza}

\address{{\rm (R. G. Plaza)} Departamento de Matem\'aticas y Mec\'anica\\Instituto de 
Investigaciones en Matem\'aticas Aplicadas y en Sistemas\\Universidad Nacional Aut\'onoma de 
M\'exico\\ Circuito Escolar s/n, Ciudad Universitaria, C.P. 04510\\Cd. de M\'{e}xico (Mexico)}

\email{plaza@aries.iimas.unam.mx}

\begin{abstract}
A family of generalized Korteweg-de Vries-Burgers equations in one space dimension with a nonlinear source is considered. The purpose of this contribution is twofold. On one hand, the local well-posedness of the Cauchy problem on periodic Sobolev spaces and the regularity of the data-solution map are established. On the other hand, it is proved that periodic traveling waves which are spectrally unstable are also orbitally (nonlinearly) unstable under the flow of the evolution equation in periodic Sobolev spaces with same period as the fundamental period of the wave. This orbital instability criterion hinges on the well-posedness of the Cauchy problem, on the smoothness of the data-solution map, as well as on an abstract result which provides sufficient conditions for the instability of equilibria under iterations of a nonlinear map in Banach spaces. Applications of the former criterion include the orbital instability of a family of small-amplitude periodic waves with finite fundamental period for the Korteweg-de Vries-Burgers-Fisher equation, which emerge from a local Hopf bifurcation around a critical value of the velocity. 
\end{abstract}

\keywords{Periodic waves, KdV-Burgers equation, orbital instability, local well-posedness, viscous-dispersive semigroup}

\subjclass[2020]{35Q53, 35B35, 37L50, 35C07, 35B10}

\maketitle

\setcounter{tocdepth}{1}



\section{Introduction}
\label{secintro}

Consider the well-known generalized Korteweg-de Vries-Burgers equation,
\begin{equation}
\label{gKdVB}
u_t + f(u)_x - u_{xx} + u_{xxx} = 0,
\end{equation}
where $x \in \R$ and $t > 0$ denote the space and time variables, respectively, $u = u(x,t) \in \R$ is a scalar unknown, and $f$ is a nonlinear function. Equations of this form arise in the description of long nonlinear wave propagation in shallow water models (cf. Benney \cite{Benn66}, Gardner \cite{Grd96} and Jones \cite{Jo72}), as well as in weakly dissipative nonlinear plasma physics (see, for example, Grad and Hu \cite{GrHu67} and Hu \cite{HuPN72}). Equations \eqref{gKdVB} constitute a family of models (parametrized by the nonlinear flux function $f$) that generalizes the standard (normalized) Korteweg de Vries-Burgers (KdVB) equation,
\begin{equation}
\label{KdVB}
u_t + \alpha uu_x -  u_{xx} +  u_{xxx} = 0,
\end{equation}
with constant $\alpha > 0$, which was first derived under the weak nonlinearity and long wavelength approximations by Su and Gardner \cite{SuGa69} in order to describe a wide class of Galilean invariant physical systems (see also Ott and Sudan \cite{OtSu70}). The KdVB equation \eqref{KdVB} belongs, of course, to the family \eqref{gKdVB} upon selecting the Burgers' flux function $f(u) = \tfrac{\alpha}{2} u^2$.

Describing traveling wave solutions to nonlinear evolution equations has been one of the fundamental problems in theoretical and experimental physics, and the KdVB equation is no exception. The literature on nonlinear waves for the KdVB equation is very comprehensive; for an abridged list of works the reader is referred to Jeffrey and Kakutani \cite{JeKa72}, Johnson \cite{Jo72}, Canosa and Gazdag \cite{CaGaz77}, Bona and Schonbek \cite{BoScho85}, Xiong \cite{Xng89}, Pego \emph{et al.} \cite{PeSW93}, Schonbek and Rajopadhye \cite{SchoRa95}, Gardner \cite{Grd96} and the many references cited therein. The aforementioned works pertain to the study of traveling fronts, viscous-dispersive shock profiles or waves of solitary type, but not to spatially periodic traveling waves. In fact, Feng and Knobel \cite{FeKn07} proved that there are no periodic traveling wave solutions to the KdVB equation: indeed, the authors transform the underlying ODE into a two-dimensional autonomous system, for which a Poincar\'e phase plane analysis precludes the existence of periodic orbits. There exist, however, periodic traveling waves for KdVB-type equations with a linear source (cf. Feng and Kawahara \cite{FeKa00}; Mancas and Adams \cite{MaAd19}), or with forcing terms (see, e.g., Hattam and Clarke \cite{HtCl15a}). The asymptotics of solutions to the KdVB equation with a linear source term has been recently studied by Naumkin and Villela-Aguilar \cite{NauVil22} and the normal form of the KdVB equation with generic source terms was calculated by Kashchenko \cite{Ksh16}. In a recent contribution, Folino \emph{et al.} \cite{FNP24} proved the existence and the spectral instability of a family of small-amplitude periodic waves for the KdVB equation with a source of logistic (or monostable) type.

Motivated by the considerations above, in this paper we study generalized Korteweg-de Vries-Burgers equations with a generic source term of the form,
\begin{equation}
\label{genKdVBF}
u_t + f(u)_x -u_{xx} + u_{xxx} = g(u),
\end{equation}
with sufficiently smooth nonlinear functions $f$ and $g$. Essentially, equations \eqref{genKdVBF} describe the dynamics of a scalar quantity $u$ in an infinite one dimensional domain which is subject to four simultaneous mechanisms. First, the density $u$ is (nonlinearly) transported with speed $f'(u)$; $u$ is also subject to dispersive effects, encoded in the term $u_{xxx}$;  the diffusion of $u$ is represented by the term $u_{xx}$ and, finally, the reaction or source term $g(u)$ may describe production/consumption (with per capita rate $g(u)/u$), chemical reactions or combustion, among other interactions. 

In this work, we are interested in establishing new results related to the dynamics of periodic traveling waves for models of the form  \eqref{genKdVBF}. In particular, we focus on their stability under small perturbations, which is a fundamental property for understanding the real-world dynamics of models of evolution type. The main goal of this paper is to show that, if a periodic wave is spectrally unstable (that is, if the formal linearized operator around the wave has spectra with positive real part when acting on an appropriate periodic space) then it is also nonlinearly (orbitally) unstable under the flow of the evolution equation \eqref{genKdVBF}. Hence, we establish an instability criterion that warrants the orbital instability of the manifold generated by any spectrally unstable periodic wave under the flow of  \eqref{genKdVBF} in periodic Sobolev spaces with \emph{same period} as the fundamental period of the wave. No particular assumptions are made on the nonlinear functions $f$ and $g$, except for their regularity. The main ingredients of our analysis are (i) the local well-posedness theory for \eqref{genKdVBF}; (ii) the regularity of the data-solution map; and, (iii) an (important) abstract result by Henry {\it{et al.}} \cite{HPW82}, which essentially determines the instability of a manifold of equilibria under iterations of a nonlinear map with unstable linearized spectrum. In order to apply the theorem by Henry \emph{et al.}, some essential elements are needed, such as a suitable well-posedness theory and the property that the data-solution map is of class $C^2$.

Hence, we start by establishing the local well-posedness for equations of the form \eqref{genKdVBF}. Albeit such result is quite standard and relies on directly applying Banach's fixed point theorem, for the convenience of the reader we present a detailed (yet concise) proof of local well-posedness of the Cauchy problem for equations of the form \eqref{genKdVBF} in periodic Sobolev spaces of distributions; we closely follow the analysis of Iorio and Iorio \cite{IoIo01} for nonlinear equations. The main reason to include this material is that several refined estimates in the course of proof are needed in order to prove the smoothness of the data-solution map, an important key element of the abstract result by Henry \emph{et al.} Moreover, up to our knowledge, the well-posedness of equations of the form \eqref{genKdVBF} in Sobolev spaces of periodic distributions has not been reported before in the literature (the only related work that we know of is the well-posedness analysis in periodic spaces for the KdVB equation, without source terms, by Molinet and Vento \cite{MoVe13}).

Once the well-posedness and the regularity of the data-solution map are at hand, we proceed to prove the orbital instability criterion by verifying the hypotheses of the abstract theorem by Henry \emph{et al.} \cite{HPW82}. For that purpose, we show that the linearized problem around the wave is globally well-posed (upon application of standard semigroup theory) and define a suitable nonlinear map with unstable spectrum. We finish our study by directly applying the instability criterion to a particular example. Indeed, in the aforementioned recent paper by Folino \emph{et al.} \cite{FNP24}, the authors applied bifurcation techniques in order to show that there exists a family of periodic waves for the Korteweg-de Vries-Burgers-Fisher equation (see equation \eqref{KdVBFn} below). The family emerges from a local (subcritical) Hopf bifurcation when the speed $c$ crosses a critical value. These waves have small-amplitude and finite period. Folino \emph{et al.} also proved, upon application of perturbation theory of linear operators, that these waves are spectrally unstable. Therefore, our orbital instability criterion extends the instability property of these waves to a nonlinear level.

In the remainder of the Introduction we make precise the concepts of spectral and orbital stability so that we can state the main results of the paper.


\subsection{Periodic waves and their stability}

A \emph{spatially periodic} traveling wave for equation \eqref{genKdVBF} is a solution which has the form
\begin{equation}
\label{tws}
u(x,t) = \varphi(x-ct),
\end{equation}
where the constant $c \in \R$ is the speed of the wave and the profile function, $\varphi = \varphi(\xi)$, $\xi \in \R$, is a sufficiently smooth periodic function of its argument with fundamental period $L > 0$. Upon substitution of \eqref{tws} into \eqref{genKdVBF} we obtain the following equation for the profile,
\begin{equation}
\label{profileq}
-c \varphi' + f'(\varphi)\varphi' + \varphi'''= \varphi'' + g(\varphi),
\end{equation}
where $' = d/d\xi$ denotes differentiation with respect to the Galilean variable of translation, $\xi = x-ct$. With a slight abuse of notation let us transform the space variable as $x \mapsto x - c t$ in order to recast \eqref{genKdVBF} into the equation
\begin{equation}
\label{genKdVBFg}
u_t = - u_{xxx} + u_{xx} + g(u) +cu_x - f(u)_x,
\end{equation}
for which now the periodic wave is a stationary solution, $u(x,t) = \varphi(x)$, in view of \eqref{profileq}. For solutions to \eqref{genKdVBFg} of the form $\varphi(x) + v(x,t)$, where $v$ denotes a nearby perturbation, the leading approximation is given by the linearization of this equation around $\varphi$, namely
\[
v_t = - v_{xxx} + v_{xx} + (c - f'(\varphi(x)))v_x + (g'(\varphi(x)) - f'(\varphi(x))_x) v.
\]
Specializing to perturbations of the form $v(x,t) = e^{\lambda t} u(x)$, where $\lambda \in \C$ and $u$ lies in an appropriate Banach space $X$, we arrive at the eigenvalue problem
\begin{equation}
\label{primeraev}
\lambda u = - u_{xxx} + u_{xx} + (c - f'(\varphi(x)))u_x + (g'(\varphi(x)) - f'(\varphi(x))_x) u,
\end{equation}
in which the complex growth rate appears as the eigenvalue. Intuitively, a necessary condition for the wave to be ``stable" is the absence of eigenvalues with $\Re \lambda > 0$, precluding exponentially growing models at the linear level. Motivated by the notion of spatially localized, finite energy perturbations in the Galilean coordinate frame in which the periodic wave is stationary, we consider $X = L^2(\R)$ and define the linearized operator around the wave as
\begin{equation}
\label{linop}
\left\{
\begin{aligned}
\tcL^c \, &: \, L^2(\R) \longrightarrow L^2(\R),\\
\tcL^c \, &: = \, -\partial_x^3 + \partial_x^2 + a_1(x) \partial_x + a_0(x) \Id, 
\end{aligned}
\right.
\end{equation}
with dense domain $D(\tcL^c) = H^2(\R)$, and where the coefficients,
\begin{equation}
\label{defas}
\begin{aligned}
a_1(x) &:= c - f'(\varphi(x)),\\
a_0(x) &:= g'(\varphi(x)) - f'(\varphi(x))_x,
\end{aligned}
\end{equation}
are bounded and periodic, satisfying $a_j(x + L) = a_j(x)$ for all $x \in \R$, $j = 0,1$. $\tcL^c$ is a densely defined, closed operator acting on $L^2(\R)$ with domain $D(\tcL^c) = H^2(\R)$. Hence, the eigenvalue problem \eqref{primeraev} is recast as $\tcL^c u = \lambda u$ for some $\lambda \in \C$ and $u \in D(\tcL^c) = H^2(\R)$. The definition of spectral stability is the absence of spectrum in the unstable complex half-plane with $\Re \lambda > 0$. More precisely, we say that a bounded periodic wave $\varphi$ is \emph{spectrally stable} as a solution to the evolution equation \eqref{genKdVBF} if the $L^2$-spectrum of the linearized operator around the wave defined in \eqref{linop} satisfies $\sigma(\cL)_{|L^2(\R)} \cap \{\lambda \in \C \, : \, \Re \lambda > 0\} = \varnothing$. Otherwise we say that it is \emph{spectrally unstable}.

It is well-known, however, that differential operators with periodic coefficients have no point spectrum in $L^2$ and its spectrum is purely essential (or continuous), $\sigma(\tcL^c)_{|L^2(\R)} = \ess(\tcL^c)_{|L^2(\R)}$ (see Lemma 3.3 in Jones \emph{et al.} \cite{JMMP14}, or Lemma 59, p. 1487, in Dunford and Schwartz \cite{DunSch2}). In such situations it is customary to describe the spectrum in terms of Floquet multipliers of the form $e^{i\theta} \in \bbS^1$, $\theta \in \R$ (mod $2\pi$) via a \emph{Bloch transform} (cf. Gardner \cite{Grd97} and Kapitula and Promislow \cite{KaPro13}). Indeed, the purely essential spectrum $\sigma(\tcL^c)_{|L^2(\R)}$ can be written as the union of partial point spectra,
\begin{equation}
\label{Floquetrep}
\sigma(\tcL^c)_{|L^2(\R)} =  \!\!\bigcup_{-\pi<\theta \leq \pi}\ptsp(\cL^c_\theta)_{|\Ldper([0,L])},
\end{equation}
where the one-parameter family of Bloch operators,
\begin{equation}
\label{Blochop}
\left\{
\begin{aligned}
\cL^c_\theta &:= - (\partial_x + i\theta/L)^3 + (\partial_x + i\theta/L)^2 + a_1(x) (\partial_x + i \theta/L) + a_0(x) \Id,\\
\cL^c_\theta &: \Ldper([0,L]) \to \Ldper([0,L]),
\end{aligned}
\right.
\end{equation}
with domain $D(\cL^c_\theta) = \Htper([0,L])$, is parametrized by the Floquet exponent (or \emph{Bloch parameter}) $\theta \in (-\pi,\pi]$, and act on the periodic Sobolev space with same period $L > 0$ as the period of the wave. Since the family has compactly embedded domains in $\Ldper = \Ldper([0,L])$ then their spectrum consists entirely of isolated eigenvalues, $\sigma(\cL^c_\theta)_{|\Ldper} = \ptsp(\cL^c_\theta)_{|\Ldper} $. Moreover, they depend continuously on the Bloch parameter $\theta$, which may be regarded as a local coordinate for the spectrum $\sigma(\tcL^c)_{|L^2(\R)}$ (see Proposition 3.7 in \cite{JMMP14}), meaning that $\lambda \in \sigma(\tcL^c)_{|L^2(\R)}$ if and only if $\lambda \in \ptsp(\cL^c_\theta)_{|\Ldper}$ for some $\theta \in (-\pi,\pi]$. The parametrization \eqref{Floquetrep} is called the \emph{Floquet characterization of the spectrum} (for details, see \'Alvarez and Plaza \cite{AlPl21}, Jones \emph{et al.} \cite{JMMP14}, Kapitula and Promislow \cite{KaPro13}, Gardner \cite{Grd97} and the many references therein). As a consequence of \eqref{Floquetrep} we conclude that the periodic wave $\varphi$ is $L^2$-spectrally unstable if and only if there exists $\theta_0 \in (-\pi, \pi]$ for which
\[
\ptsp(\cL_{\theta_0}^{{c}})_{|\Ldper([0,L])} \cap \{ \lambda \in \C \, : \, \Re \lambda > 0\} \neq \varnothing.
\]

\begin{remark}
\label{remBlochzero}
Notice that when the Bloch parameter is $\theta = 0$, the expression of the operator $\cL_0^c$ coincides with that of the linearized operator around the wave in \eqref{linop}, but now acting on a periodic space:
\begin{equation}
\label{linopBloch0}
\left\{
\begin{aligned}
\cL_0^c \, &: \, \Ldper([0,L]) \longrightarrow \Ldper([0,L]),\\
\cL_0^c \, &: = \, -\partial_x^3 + \partial_x^2 + a_1(x) \partial_x + a_0(x) \Id.
\end{aligned}
\right.
\end{equation}
\end{remark}

Suppose that the spectral (in)stability of a periodic wave is established. Then a natural question is whether this spectral information implies the nonlinear stability of the wave with respect to the flow of the equation \eqref{genKdVBF}. First, note that if the profile function $\varphi = \varphi(\cdot)$ is smooth enough then it belongs to the periodic space $\Htper([0,L])$. Hence, one can compare the motion $\varphi(x - ct)$, as a solution to \eqref{genKdVBF}, to a general class of motions $u = u(x,t)$ evolving  from initial conditions, $u(0) =\psi$, that are close in some sense to $\varphi$. The notion of orbital stability is, thus, the property that $u(\cdot,t)$ remains close to $\varphi(\cdot +\gamma)$, $\gamma=\gamma(t)$, for all times provided that $u(0)$ starts close to $\varphi(\cdot)$. We define the orbit generated by $\varphi$ as the set
\[
\cO_\varphi = \{ \varphi(\cdot + r) \, : \, r \in \R \} \subset \Htper([0,L]).
\]
We note that $\cO_\varphi$ represents a $C^1$-curve, $\Gamma=\Gamma(r)$, in $\Htper([0,L])$ determined by the parameter $r\in \mathbb R$, $\Gamma(r)=\zeta_r( \varphi)$, where $\zeta$ denotes the translation operator, $\zeta_\eta(u) = u(\cdot + \eta)$. Thus, the traveling  wave profile will be orbitally stable if its orbit $\Gamma$ is stable by the flow generated by the evolution equation. Consequently, we have the following definition associated to \eqref{genKdVBF} (cf. Angulo \cite{AngAMS09}): Let $X$, $Y$ be Banach spaces, with the continuous embedding $Y \hookrightarrow X$; let $\varphi \in X$ be a traveling wave solution to equation \eqref{genKdVBF}. We say $\varphi$ is \emph{orbitally stable} in $X$ by the flow of \eqref{genKdVBF} if for each $\vep > 0$ there exists $\delta = \delta(\vep) > 0$ such that if $\psi \in Y$ and 
\[
\inf_{r \in \R} \| \psi(\cdot) - \varphi(\cdot + r) \|_X < \delta,
\]
then the solution $u(x,t)$ of \eqref{genKdVBF} with initial condition $u(0) = \psi$ exists globally and satisfies
\[
\sup_{t>0} \inf_{r \in \R} \| u(\cdot, t) - \varphi(\cdot + r) \|_X < \vep.
\] 
Otherwise we say that $\varphi$ is \emph{orbitally unstable} in $X$. In the present context of periodic waves for equations of the form \eqref{genKdVBF}, we choose $X = Y = \Htper([0,L])$, where $L > 0$ is the fundamental period of the wave.

\subsection{Main results}

The first main result of this paper pertains to the local well-posedness of the Cauchy problem for the generalized KdVB equation with a source \eqref{genKdVBF} on periodic Sobolev spaces. It also establishes the regularity of the data-solution map.

\begin{theorem}
[local well-posedness]
\label{teolocale}
Assume $f \in C^2(\R)$, $g \in C^1(\R)$, $L > 0$ and $s > 5/2$. If $\phi \in \Hsper([0,L])$ then there exist some ${T} = {T}(\| \phi \|_s) > 0$ and a unique solution $u \in C([0,T];\Hsper([0,L])) \cap C^1((0,T];\Hper^{s-2}([0,L]))$ to the Cauchy problem for equation \eqref{genKdVBF} with initial datum $u(0)=\phi$. For each $T_0\in (0,T)$, the data-solution map,
\[
\phi\in \Hsper([0,L]) \mapsto u_\phi\in C([0,T_0];\Hsper([0,L])),
\]
is continuous. Moreover, if we further assume $f \in C^4(\R)$ and $g \in C^3(\R)$ then the data-solution map is of class $C^2$.
\end{theorem}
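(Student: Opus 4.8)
The plan is to prove local well-posedness via Banach's fixed point theorem applied to the Duhamel (mild) formulation of \eqref{genKdVBF}, following the Iorio-Iorio framework for semilinear evolution equations. The first step is to analyze the linear part. I would write the operator $A := -\partial_x^3 + \partial_x^2$ acting on $\Ldper([0,L])$ and use the Fourier representation on the periodic torus: on the $k$-th Fourier mode, $A$ acts as multiplication by the symbol $ik^3(2\pi/L)^3 \cdots$, whose real part is $-(2\pi k/L)^2 \le 0$. Hence the purely diffusive part dominates and $A$ generates a strongly continuous (indeed analytic) semigroup $\{e^{tA}\}_{t\ge 0}$ on every $\Hsper([0,L])$, with the smoothing estimate $\|e^{tA}\phi\|_{s+\sigma} \le C t^{-\sigma/2}\|\phi\|_s$ for $\sigma \ge 0$ and $t\in(0,T]$. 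This parabolic smoothing (coming from the $u_{xx}$ term) is what compensates the loss of derivatives in the nonlinear terms $f(u)_x$ and $cu_x$.

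Next I would recast the Cauchy problem as the fixed-point equation
\begin{equation*}
u(t) = e^{tA}\phi + \int_0^t e^{(t-\tau)A}\bigl(cu_x - f(u)_x + g(u)\bigr)(\tau)\,d\tau =: \Phi(u)(t),
\end{equation*}
and show $\Phi$ is a contraction on a closed ball of the space $C([0,T];\Hsper([0,L]))$ for $T$ small. The key estimates are: (i) since $s>5/2$ means $\Hsper$ is a Banach algebra embedding into $C^2$, the Nemytskii (composition) maps $u\mapsto f(u)$ and $u\mapsto g(u)$ are locally Lipschitz from $\Hsper$ into itself (using $f\in C^2$, $g\in C^1$); and (ii) the derivative loss from $f(u)_x$ and $cu_x$ costs one spatial derivative, which is absorbed by the smoothing bound $\|e^{(t-\tau)A}\partial_x\|_{\Hsper\to\Hsper}\le C(t-\tau)^{-1/2}$, whose singularity is integrable on $(0,t)$. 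Combining these yields a contraction constant proportional to a positive power of $T$, giving existence, uniqueness, and persistence $u\in C([0,T];\Hsper)\cap C^1((0,T];\Hper^{s-2})$; continuous dependence follows from the same Lipschitz estimates applied to two solutions.

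The substantive new part is the $C^2$ regularity of the data-solution map under the stronger assumptions $f\in C^4$, $g\in C^3$. The approach is to differentiate the fixed-point relation formally in the initial datum: the candidate first derivative $D_\phi u_\phi[h]$ should solve the linearized integral equation
\begin{equation*}
w(t) = e^{tA}h + \int_0^t e^{(t-\tau)A}\bigl(cw_x - (f'(u)w)_x + g'(u)w\bigr)(\tau)\,d\tau,
\end{equation*}
which is linear and nonautonomous in $w$ with coefficients built from $u$; the same smoothing estimates show this map is a well-defined bounded linear operator on $C([0,T_0];\Hsper)$ and a contraction for $T$ small, hence solvable. One then rigorously verifies that this $w$ is indeed the Fr\'echet derivative by estimating the Taylor remainder $\|u_{\phi+h}-u_\phi - w\|$ in the sup-in-time $\Hsper$-norm and showing it is $o(\|h\|_s)$; this requires $f,g\in C^2$ for a first-order Taylor expansion with controlled remainder. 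Iterating the argument for the second derivative, one differentiates the equation for $w$ once more, producing a linearized equation for $D^2_\phi u_\phi[h,k]$ whose inhomogeneity involves $f''(u),g''(u)$ acting bilinearly on the first derivatives; the $C^2$ conclusion then needs $f''$, $g''$ themselves to be $C^2$, i.e. $f\in C^4$, $g\in C^3$, so that the second-order Nemytskii maps are $C^1$ and the remainder estimate closes.

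The main obstacle I expect is bookkeeping the derivative loss throughout the differentiation procedure: each Fr\'echet derivative introduces new $\partial_x$-factors through the transport terms, and one must confirm at every stage that the parabolic smoothing $(t-\tau)^{-1/2}$ remains integrable and that the resulting constants stay uniform on $[0,T_0]$ with $T_0<T$. A secondary technical point is justifying that the formally differentiated integral equations have solutions in the correct space and that differentiation under the integral and through the semigroup is legitimate; this is handled by the contraction-mapping setup rather than by ad hoc manipulation. The estimates themselves are routine given the Banach-algebra property of $\Hsper$ for $s>5/2$ and the explicit semigroup smoothing bounds, so the difficulty is organizational—carefully tracking the regularity budget—rather than conceptual.
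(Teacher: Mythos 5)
Your existence--uniqueness--continuity argument is essentially the paper's: Duhamel formulation, the viscous smoothing of the semigroup generated by $-\partial_x^3+\partial_x^2$, and a contraction on a ball of $C([0,T];\Hsper)$. Two caveats on that part. First, the semigroup is \emph{not} analytic: the symbol is $i(2\pi k/L)^3-(2\pi k/L)^2$, so $|\Im\lambda_k|/|\Re\lambda_k|\to\infty$ and the spectrum is not sectorial; only the smoothing bound $\|\cV(t)\phi\|_{r+\delta}\lesssim t^{-\delta/2}\|\phi\|_r$ (which follows from the real part of the symbol alone) is true and needed, so this slip is harmless. Second, your estimate (i) --- that $u\mapsto f(u)$, $u\mapsto g(u)$ are locally Lipschitz from $\Hsper$ \emph{into itself} under $f\in C^2$, $g\in C^1$ --- does not hold for $s>5/2$: putting $g(u)$ in $H^s$ generically costs about $\lceil s\rceil$ derivatives of $g$. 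The paper sidesteps this by measuring the whole nonlinearity $F(u,u_x)=g(u)-f'(u)u_x$ in the \emph{weaker} norm $\Hper^{s-\delta}$ with $\delta\in(\tfrac32,2)$, where the low regularity of $f,g$ and the Banach-algebra property of $\Hper^{s-\delta}$ suffice, and then recovers all $\delta$ derivatives through the integrable singularity $(t-\tau)^{-\delta/2}$. Your scheme should be reorganized the same way; as written, the Nemytskii step fails under the stated hypotheses.

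For the $C^2$ regularity of the data-solution map you take a genuinely different route. You propose to differentiate the fixed-point equation directly, solve the linearized integral equation for the candidate derivative, and verify Fr\'echet differentiability by hand via Taylor-remainder estimates, iterating for the second derivative. The paper instead defines $\Gamma(\psi,w)(t)=w(t)-\cV(t)\psi-\int_0^t\cV(t-\tau)F(w,w_x)\,d\tau$, shows $\Gamma$ is twice Fr\'echet differentiable (from $F\in C^3$, i.e.\ $f\in C^4$, $g\in C^3$) and that $\partial_w\Gamma(\phi,u_\phi)$ is invertible (it is an $O(T+T^{1-\delta/2})$ perturbation of the identity), and then invokes the Implicit Function Theorem in Banach spaces, which delivers $C^2$ regularity of $\Upsilon$ at once. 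Both routes work; the IFT argument buys you freedom from the iterated remainder estimates and the derivative-loss bookkeeping that you correctly identify as the delicate point of the direct method, at the price of having to verify smoothness of $\Gamma$ jointly in $(\psi,w)$. Your linearized integral equation is, incidentally, exactly the operator $\partial_w\Gamma(\phi,u_\phi)$ of the paper, so the two approaches meet at the same object.
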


The second main theorem establishes a general criterion for orbital instability of a periodic wave under the flow of the nonlinear evolution equation \eqref{genKdVBF}, based on an unstable spectrum of the linearized operator when posed on periodic Sobolev spaces with same period as the fundamental period of the wave.  

\begin{theorem}[orbital instability criterion]
\label{mainthem}
Suppose that $f \in C^4(\R)$, $g \in C^3(\R)$. Let $u(x,t) = \varphi (x-ct)$ be a periodic traveling wave solution with speed $c \in \R$ to the generalized KdV-Burgers equation \eqref{genKdVBF}, where the profile function $\varphi = \varphi(\cdot)$ is of class $C^3$ and has fundamental period $L > 0$. Assume that the following \emph{spectral instability property} holds: the linearized operator around the wave, namely
\begin{equation}
\label{lindoperc}
\left\{
\begin{aligned}
\cL^c &: \Ldper([0,L]) \to \Ldper([0,L]),\\
D(\cL^c) &= \Hper^3([0,L]) \subset \Ldper([0,L]),\\
\cL^c u &= - u_{xxx} + u_{xx} + g'(\varphi(x)) u - (f'(\varphi(x))u)_x + cu_x, \quad u \in \Htper([0,L]),
\end{aligned}
\right.
\end{equation}
has an unstable eigenvalue, that is, there exists $\lambda \in \C$ with $\Re \lambda > 0$ and some eigenfunction $\Psi \in D(\cL^c) = \Htper([0,L])$ such that $\cL^c \Psi = \lambda \Psi$. Then the periodic traveling wave is orbitally unstable in $\Hper^3([0,L])$ under the flow of equation \eqref{genKdVBF}.
\end{theorem}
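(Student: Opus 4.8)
The plan is to apply the abstract instability theorem of Henry, Perez and Wreszinski \cite{HPW82} to the time-$T$ solution map of \eqref{genKdVBF}, exploiting that the orbit $\cO_\varphi$ is an invariant manifold of equilibria whose linearization inherits the unstable eigenvalue $\lambda$. First I would pass to the co-moving frame \eqref{genKdVBFg}, in which $\varphi$ is a stationary solution. This equation is again of the form \eqref{genKdVBF}, now with flux $f(u) - cu \in C^4(\R)$ and source $g \in C^3(\R)$, so Theorem \ref{teolocale} applies verbatim with $X = \Hper^3([0,L])$ (note $3 > 5/2$). Bootstrapping the profile equation \eqref{profileq} shows $\varphi \in C^\infty$, so, since $\varphi$ has period $L$, the orbit $\cO_\varphi = \{ \varphi(\cdot + r) : r \in \R/L\Z \}$ is a compact, smooth closed curve in $\Hper^3([0,L])$; by the autonomy and translation invariance of \eqref{genKdVBFg}, each translate $\varphi(\cdot + r)$ is again a stationary solution. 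Using the lower bound on the existence time furnished by Theorem \ref{teolocale} together with the compactness of $\cO_\varphi$, I can fix a single $T > 0$ for which the data-solution map $S : \phi \mapsto u_\phi(\cdot, T)$ is defined on a neighborhood of $\cO_\varphi$, fixes every point of $\cO_\varphi$, and, because $f \in C^4$ and $g \in C^3$, is of class $C^2$.

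Next I would identify the Fréchet derivative $DS(\varphi)$ with the linearized flow: differentiating the Duhamel formula underlying the fixed-point construction of Theorem \ref{teolocale} yields $DS(\varphi)w = v(\cdot, T)$, where $v$ solves the linearized problem $v_t = \cL^c v$, $v(0) = w$, with $\cL^c$ as in \eqref{lindoperc}. Splitting $\cL^c = A + B$ into the principal part $A = -\partial_x^3 + \partial_x^2$ and the first-order remainder $B = a_1 \partial_x + a_0$, the Fourier symbol of $A$ has real part $\sim -k^2$, so $e^{tA}$ is a bounded, \emph{smoothing} $C_0$ semigroup on $\Ldper([0,L])$; a Duhamel/perturbation argument (standard semigroup theory, as $e^{tA}$ gains derivatives and $B$ loses only one) then produces the $C_0$ semigroup $e^{t\cL^c}$, and $DS(\varphi) = e^{T\cL^c}$. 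The diffusive smoothing makes $e^{t\cL^c}$ map $\Ldper$ into $\Hper^m$ for every $t > 0$, so $e^{t\cL^c}$ is compact for $t > 0$; moreover $\cL^c$ has compact resolvent, since its domain $\Hper^3([0,L])$ embeds compactly into $\Ldper([0,L])$, whence $\sigma(\cL^c)$ is discrete with $\Re \lambda_j \to -\infty$.

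With this structure in hand, the spectral radius of $DS(\varphi)$ is forced to exceed $1$ by the unstable eigenvalue: from $\cL^c \Psi = \lambda \Psi$ I obtain $e^{T\cL^c}\Psi = e^{T\lambda}\Psi$ with $|e^{T\lambda}| = e^{T \Re\lambda} > 1$, and by elliptic regularity $\Psi \in C^\infty \subset \Hper^3([0,L])$, so the unstable eigenvalue persists for $DS(\varphi)$ acting on $X$. By the spectral mapping theorem for eventually compact semigroups, $\sigma(e^{T\cL^c}) \setminus \{0\} = e^{T\sigma(\cL^c)}$, so $DS(\varphi)$ has only finitely many eigenvalues outside the closed unit disk (a genuine spectral gap, with a finite-dimensional unstable projection), while its unit-circle spectrum consists of $e^{T\lambda_j}$ with $\Re \lambda_j = 0$ and contains the neutral translation mode: differentiating the stationary equation gives $\cL^c \varphi' = 0$, hence $DS(\varphi)\varphi' = \varphi'$ with $\varphi'$ tangent to $\cO_\varphi$.

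Finally I would verify the hypotheses of \cite{HPW82}: $S$ is $C^2$ near the $C^1$ fixed-point manifold $\cO_\varphi$, the tangent space $\Span\{\varphi'\}$ coincides with the eigenspace associated to the unit-circle spectrum of $DS(\varphi)$, and the unstable spectrum $\sigma(DS(\varphi)) \cap \{ |z| > 1 \}$ is nonempty with the required spectral projection. The theorem then gives instability of $\cO_\varphi$ under the iteration $S^n$: arbitrarily close to $\cO_\varphi$ there exist data whose iterates leave a fixed neighborhood of the orbit. Since $S^n(z) = u_z(\cdot, nT)$, the continuous trajectory escapes as well, which is precisely orbital instability in $\Hper^3([0,L])$ (global existence is not needed, as finite-time escape or blow-up equally contradict orbital stability). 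I expect the main obstacle to be twofold and intertwined: rigorously identifying $DS(\varphi)$ with $e^{T\cL^c}$ from the Duhamel formula, and pinning down the spectrum of $e^{T\cL^c}$ — in particular showing that the diffusive smoothing forces compactness, so that the spectral mapping theorem holds and the neutral part of $\sigma(DS(\varphi))$ on the unit circle is exactly accounted for by the translation tangent $\varphi'$ — so that the hypotheses of \cite{HPW82} are met cleanly.
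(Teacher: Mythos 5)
Your core chain is exactly the paper's: use Theorem \ref{teolocale} to get a $C^2$ time-$T$ solution map, identify its Fr\'echet derivative at $\varphi$ with the solution operator of the linearized problem $v_t=\cL^c v$, feed the eigenfunction $\Psi$ through to obtain the eigenvalue $e^{\lambda T}$ of modulus greater than one, and invoke Henry--Perez--Wreszinski. The paper implements the change of frame by composing the time-$T$ map of the \emph{original} equation with the translation $\zeta_{cT}$, i.e.\ $\cS(\phi)=\zeta_{cT}(u_\phi(T))$, rather than re-running the well-posedness theory for the co-moving equation with flux $f(u)-cu$ as you do; the two devices are interchangeable, and the identification $\cS'(\varphi)\psi=v_\psi(T)$ is carried out in Lemma \ref{propS} by differentiating the Duhamel formula, as you propose. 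The linear solvability you need is supplied by Theorem \ref{lemglobalwp}, which produces the semigroup $e^{t\cL^c}$ from a direct resolvent estimate and Hille--Yosida rather than from your splitting of $\cL^c$ into a smoothing principal part plus a relatively bounded remainder; either route works.

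Where you diverge --- and where you have manufactured your own ``main obstacle'' --- is the final verification. The abstract result (Theorem \ref{teohenry} and Corollary \ref{corhenry}) requires only that the $C^2$ map $\cS$ fix $\varphi$ and that \emph{some} element of $\sigma(\cS'(\varphi))$ have modulus exceeding one; there is no hypothesis about a spectral gap, a finite-dimensional unstable projection, or the unit-circle spectrum being exhausted by the translation mode. Consequently the compactness of $e^{t\cL^c}$, the spectral mapping theorem for eventually compact semigroups, and the claim that the neutral eigenspace of $\cS'(\varphi)$ coincides with $\Span\{\varphi'\}$ are all unnecessary. This is fortunate, because that last claim is not verifiable in general: nothing rules out further spectrum of $\cL^c$ on the imaginary axis beyond the kernel direction $\varphi'$, and no such statement is proved (or needed) in the paper. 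Once you drop that step, your argument closes exactly as the paper's does: $\mu=e^{\lambda T}\in\sigma(\cS'(\varphi))$ with $|\mu|>1$, and Corollary \ref{corhenry} applied to the $C^1$ curve of fixed points $\cO_\varphi$ yields orbital instability in $\Htper([0,L])$.
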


\subsection*{Plan of the paper}

This paper is structured as follows. Section \ref{seclwp} contains the well-posedness theory for equations \eqref{genKdVBF} in periodic Sobolev spaces. Particular attention is paid to the regularity of the data-solution map. Section \ref{seccriterion} is devoted to proving the orbital instability criterion for periodic wave solutions to \eqref{genKdVBF}. In Section \ref{secapplication} we apply the former instability criterion to the family of small amplitude periodic waves to the Korteweg-de Vries-Burgers-Fisher equation (see equation \eqref{KdVBFn} below).

\subsection*{On notation}
We denote the real and imaginary parts of a complex number $\lambda \in \C$ by $\Re\lambda$ and $\Im\lambda$, respectively, as well as complex conjugation by $\overline{\lambda}$.  Linear operators acting on infinite-dimensional spaces are indicated with calligraphic letters (e.g., $\cL$), except for the identity operator which is indicated by $\Id$. The domain of a linear operator, $\cL : X \to Y$, with $X$, $Y$ Banach spaces, is denoted as $D(\cL) \subseteq X$. For a closed linear operator with dense domain the usual definitions of resolvent and spectra apply (cf. Kato \cite{Kat80}). When computed with respect to the space $X$, the spectrum of $\cL$ is denoted as $\sigma(\cL)_{|X}$.

\section{Local well-posedness}
\label{seclwp}

In this section we establish the local well-posedness in $\Hsper([0,L])$
for any $s\geq 5/2$ of the generalized KdVB equation \eqref{genKdVBF}.  In addition, we show that the data-solution map is of class $C^2$ if we require further regularity on the nonlinear functions $f$ and $g$.

\subsection{Preliminaries}

The classical Lebesgue and Sobolev spaces of complex-valued functions on the real line will be denoted as $L^2(\R)$ and $H^m(\R)$, with $m \in \N$, endowed with the standard inner products and norms. For any period $L > 0$ let $\mathscr{P} = C^\infty_{\mathrm{\tiny{per}}}([0,L])$ be the space of smooth $L$-periodic test functions and $\mathscr{P}'$ the space of $L$-periodic distributions. For any $s \in \R$, we denote by $\Hsper([0,L]) \subset \mathscr{P}'$ the Sobolev space of $L$-periodic distributions such that
\[
\| u \|^2_s := L \sum_{k=-\infty}^{\infty} (1 + |k|^2)^s |\widehat{u}(k)|^2 < \infty,
\]
where the sequence of the coefficients
\[
\widehat{u}(k)=\frac{1}{L}\int_{0}^{L} e^{-\frac{2\pi i}{L}kx} u(x) \,  dx, \qquad k \in \Z,
\]
is the Fourier transform of $u$. According to custom we denote $H^0_\mathrm{\tiny{per}} = \Ldper$. If $s > k + \tfrac{1}{2}$, $k \in \N \cup \{0\}$, then there holds the continuous embedding, $\Hsper \hookrightarrow C^k_\mathrm{\tiny{per}}$, where $C^k_\mathrm{\tiny{per}}$ is the space of $L$-periodic functions with $k$ continuous derivatives. The translation operator in $\Hsper([0,L])$ will be denoted as $\zeta_\eta : \Hsper([0,L]) \to \Hsper([0,L])$, $\zeta_\eta(u) = u(\cdot + \eta)$ for any $\eta \in \R$. Translation is a smooth operator in $\Hsper([0,L])$. Moreover, if $s \geq 0$ then we have $\| \zeta_\eta(u) \|_s = \| u \|_s$ for all $u \in \Hsper$ and all $\eta \in \R$ (see, e.g., Iorio and Iorio \cite{IoIo01}).

When $s = m \in \N \cup \{ 0 \}$, Sobolev spaces of periodic distributions can be characterized as follows. If $s = 0$ then $H^0_\mathrm{\tiny{per}} = \Ldper$ coincides with the space of complex $L$-periodic functions in $L^2_\mathrm{\tiny{loc}}(\R)$ satisfying $u(x + L) = u(x)$, a.e. in $x$, with inner product and norm given by
\[
\< u, v \>_0 = \int_0^L u(x) \overline{v(x)} \, dx, \qquad \| u \|^2_0 = \< u, u \>_0.
\]
For any $m \in \N$, $\Hmper([0,L])$ coincides with the space of functions
\[
\Hmper([0,L]) = \{ u \in H^m([0,L]) \, : \, \partial_x^j u (0) = \partial_x^j u(L), \; j=0,1,\ldots, m-1\}.
\]
The product and norm are given by
\[
\< u,v \>_m = \sum_{j=0}^m \< \partial_x^j u, \partial_x^j v\>_0, \quad \|u\|_m^2 = \< u,u\>_m.
\]
In the sequel (and for the rest of the paper) we use the notation,%
\[
\Hsper := \Hsper([0,L]) \; \text{ for any } s \in\mathbb{R},
\]
whenever there is no ambiguity in the choice of the period $L$. We start by studying the semigroup associated to the linear part of equation \eqref{genKdVBF}.

\subsection{The viscous-dispersive semigroup}

Consider the Cauchy problem for the following linear evolution equation in $\Hsper$,
\begin{equation}
 \label{LKdVB}
 \left\{
\begin{aligned}
u_t &= u_{xx} - u_{xxx}, \\
u(0) &= \phi,
\end{aligned}
\right.
\end{equation}
where $\phi \in \Hsper$ is the initial condition. Since we only consider the viscosity and dispersive terms of the linear part of equation \eqref{genKdVBF}, we denote the operator $\cT = -\partial_x^3 + \partial_x^2$ as the viscous-dispersive generator. This problem is a particular case of the problems studied by Iorio and Iorio \cite{IoIo01}, Section 4.2, of the form
\[
\left\{
\begin{aligned}
u &\in C([0,\infty); \Hsper),\\
u_t + i q(\partial) u &= \mu u_{xx},\\
u(0) &= \phi \in \Hsper,
\end{aligned}
\right.
\]
with $s \in \R$ fixed, $\mu \geq 0$ and $q = q(k) \in \mathscr{S}'$, $k \in \Z$, real valued so that
\[
q(\partial) f = \big( q(\cdot) \widehat{f}(\cdot)\big)^\vee, \qquad \text{ for all } \, f \in \mathscr{P}'.
\]
In our case, $q(k) = -k^3$, $\mu = 1$ and $i q(\partial) u =- i (k^3 \widehat{u})^\vee = u_{xxx}$.
%
\begin{theorem}
\label{localexistlinear}
The Cauchy problem \eqref{LKdVB} is globally well-posed in $\Hsper$ for any $s \in \R$. That is, for every $\phi \in \Hsper$ there exists a unique mild solution $u \in C([0,T];\Hsper)$ for all $T > 0$. The solution is given by
\[
u(t) = \cV(t) \phi,
\]
where the family of operators
\begin{equation}
\label{vdsg}
\left\{
\begin{aligned}
\cV(t) &: \Hsper \to \Hsper,\\
\cV(t) \phi &:= \sum_{k = - \infty}^\infty e^{-Q(k) t} \phi, \qquad t > 0,
\end{aligned}
\right.
\end{equation}
with
\begin{equation}
\label{Qks}
Q(k) := \Big(  \frac{2\pi}{L}\Big)^{2}k^{2}-i \Big(  \frac{2\pi} {L}\Big)^{3}k^{3}, \qquad k \in \Z,
\end{equation}
is a (viscous-dispersive) $C_0$-semigroup with infinitesimal generator $\cT = -\partial_x^3 + \partial_x^2$, $D(\cT) = \Hsper$. The solution depends continuously on the initial data in the following sense
\[
\sup_{t \in [0,\infty)} \| \cV(t) \phi_1 - \cV(t) \phi_2 \|_s \leq \| \phi_1 - \phi_2 \|_s,
\]
for all $\phi_j \in \Hsper$, $j = 1,2$.
\end{theorem}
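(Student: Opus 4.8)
The plan is to realize the family $\cV(t)$ as the Fourier multiplier operator with symbol $e^{-Q(k)t}$, that is $\widehat{\cV(t)\phi}(k) = e^{-Q(k)t}\,\widehat{\phi}(k)$, and then read off every assertion of the theorem from elementary properties of this symbol. The single fact driving the whole argument is the sign of the real part of $Q$: since the dispersive contribution $-i(2\pi/L)^3 k^3$ is purely imaginary,
\[
\Re Q(k) = \Big(\frac{2\pi}{L}\Big)^{2} k^{2} \geq 0, \qquad \text{hence}\qquad |e^{-Q(k)t}| = e^{-(2\pi/L)^2 k^2 t} \leq 1
\]
for every $k \in \Z$ and every $t \geq 0$. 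The viscous part thus produces (super-polynomial) decay in $k$ for $t>0$, while the dispersive part contributes only a unimodular oscillation.

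First I would show that each $\cV(t)$ is a bounded operator on $\Hsper$ of norm at most one. From the definition of the $\Hsper$-norm and the bound above,
\[
\|\cV(t)\phi\|_s^2 = L\sum_{k=-\infty}^\infty (1+|k|^2)^s |e^{-Q(k)t}|^2 |\widehat{\phi}(k)|^2 \leq L\sum_{k=-\infty}^\infty (1+|k|^2)^s |\widehat{\phi}(k)|^2 = \|\phi\|_s^2,
\]
uniformly in $t\geq 0$. The semigroup identities $\cV(0) = \Id$ and $\cV(t)\cV(\tau) = \cV(t+\tau)$ follow mode by mode from $e^{-Q(k)\cdot 0}=1$ and $e^{-Q(k)t}e^{-Q(k)\tau}=e^{-Q(k)(t+\tau)}$.

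Next I would establish strong continuity and identify the generator, both by dominated convergence on the Fourier series. For fixed $\phi \in \Hsper$,
\[
\|\cV(t)\phi - \phi\|_s^2 = L\sum_{k=-\infty}^\infty (1+|k|^2)^s |e^{-Q(k)t}-1|^2 |\widehat{\phi}(k)|^2,
\]
where each term tends to $0$ as $t\to 0^+$ and is dominated by the summable sequence $4L(1+|k|^2)^s|\widehat{\phi}(k)|^2$; hence $\cV(t)\phi \to \phi$, and together with the semigroup property and the uniform bound this yields strong continuity on $[0,\infty)$, so $\cV$ is a $C_0$-semigroup. For the generator I would differentiate the symbol: using $|e^{-z}-1|\leq |z|$ for $\Re z \geq 0$ one has $|t^{-1}(e^{-Q(k)t}-1)|\leq |Q(k)|$, so on those $\phi$ with $\sum_k (1+|k|^2)^s|Q(k)|^2|\widehat{\phi}(k)|^2 < \infty$ (a dense domain) dominated convergence gives $t^{-1}(\cV(t)\phi-\phi) \to \sum_k(-Q(k))\widehat{\phi}(k)(\cdot)$ in $\Hsper$. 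Since $-Q(k)$ is precisely the Fourier symbol of $-\partial_x^3+\partial_x^2$, the generator is $\cT$ with its natural domain.

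Finally, global well-posedness and continuous dependence come for free. The function $u(t):=\cV(t)\phi$ is a mild solution in $C([0,T];\Hsper)$ for every $T>0$, with no restriction on $T$ exactly because $\|\cV(t)\|\leq 1$ holds for all $t\geq 0$. Uniqueness is transparent in Fourier variables: the difference $w$ of two solutions with the same datum satisfies the decoupled scalar equations $\partial_t \widehat{w}(k) = -Q(k)\widehat{w}(k)$ with $\widehat{w}(k,0)=0$, forcing $w\equiv 0$. The continuous-dependence estimate is just the contraction bound applied to $\phi_1-\phi_2$, giving $\sup_{t\geq 0}\|\cV(t)\phi_1 - \cV(t)\phi_2\|_s \leq \|\phi_1-\phi_2\|_s$. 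I do not expect a genuine obstacle: everything reduces to the uniform symbol bound $|e^{-Q(k)t}|\leq 1$, and the only steps requiring care — strong continuity and generator identification — are handled uniformly by dominated convergence against the summable majorant $(1+|k|^2)^s|\widehat{\phi}(k)|^2$.
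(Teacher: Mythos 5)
Your proof is correct and follows essentially the same route as the paper: the paper simply cites Theorems 4.9 and 4.14 of Iorio and Iorio (with $\mu=1$, $q(k)=-k^3$), and the content of those results is exactly the Fourier-multiplier argument you write out, driven by the contraction bound $|e^{-Q(k)t}|=e^{-(2\pi/L)^2k^2t}\leq 1$ together with dominated convergence for strong continuity and for identifying the generator. The only (harmless) discrepancy is that you correctly identify the generator's domain as $\{\phi : \sum_k(1+|k|^2)^s|Q(k)|^2|\widehat{\phi}(k)|^2<\infty\}$, i.e.\ $H^{s+3}_{\mathrm{per}}$, whereas the theorem statement writes $D(\cT)=\Hsper$.
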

\begin{proof}
This result is a particular case with $\mu = 1$ and $q(k) = -k^3$, $k \in \Z$, of the theory developed by Iorio and Iorio \cite{IoIo01}, section 4.2, pp. 218 - 223 (see, for instance, Theorems 4.9 and 4.14 in that reference). The expression for the semigroup \eqref{vdsg} - \eqref{Qks} can be obtained by a direct calculation, recalling that the fundamental period is $L> 0$.
\end{proof}

\begin{corollary}
\label{cordervdsg}
For all $s \in \R$, $N \geq 0$ and for every $\phi \in \Hsper$ there holds
\begin{equation}
\label{limdervdsg}
\lim_{h \to 0} \left\| h^{-1} \big( \cV(t+h) - \cV(t) \big) \phi - \partial_x^2 \big(  \cV(t) \phi \big) + \partial_x^3 \big(  \cV(t) \phi \big) \right\|_{s-N} = 0,
\end{equation}
uniformly with respect to $t \geq 0$. Moreover, there exists a uniform constant $\overline{C} > 0$ such that
\begin{equation}
\label{unifsgbd}
\left\| h^{-1} \big( \cV(h) - \Id) \big) - \partial_x^2 + \partial_x^3 \right\| \leq \overline{C},
\end{equation}
in the operator topology for all $ 0 \leq |h| \ll 1$, sufficiently small.
\end{corollary}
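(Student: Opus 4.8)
The plan is to work entirely on the Fourier side, where the semigroup $\cV(t)$ acts diagonally. Recall from \eqref{vdsg}--\eqref{Qks} that for $\phi \in \Hsper$ with Fourier coefficients $\widehat\phi(k)$, one has $\widehat{\cV(t)\phi}(k) = e^{-Q(k)t}\widehat\phi(k)$, where $Q(k) = (2\pi/L)^2 k^2 - i(2\pi/L)^3 k^3$. The generator is $\cT = -\partial_x^3 + \partial_x^2$, whose symbol is exactly $-Q(k)$ since $\widehat{\partial_x^2 \phi}(k) = -(2\pi/L)^2 k^2 \widehat\phi(k)$ and $\widehat{-\partial_x^3\phi}(k) = i(2\pi/L)^3 k^3 \widehat\phi(k)$. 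Thus on the Fourier side the quantity inside the norm in \eqref{limdervdsg} has $k$-th coefficient equal to
\[
e^{-Q(k)t}\Big( \frac{e^{-Q(k)h}-1}{h} + Q(k) \Big)\widehat\phi(k).
\]
First I would reduce both statements to a scalar estimate on the multiplier $m_h(k) := h^{-1}(e^{-Q(k)h}-1) + Q(k)$, since the $\Hsper$-norm (or $\Hper^{s-N}$-norm) of the expression is $\sum_k (1+|k|^2)^{s-N}|e^{-Q(k)t}|^2 |m_h(k)|^2 |\widehat\phi(k)|^2$ up to the factor $L$.

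For \eqref{limdervdsg}, the key observation is that $\Re Q(k) = (2\pi/L)^2 k^2 \ge 0$, so $|e^{-Q(k)t}| = e^{-(2\pi/L)^2 k^2 t} \le 1$ uniformly in $t \ge 0$. To get the limit as $h \to 0$, I would split the sum at some large $|k| \le K$ and $|k| > K$. For the finitely many low modes, $m_h(k) \to 0$ pointwise by the Taylor expansion $e^{-Q(k)h} = 1 - Q(k)h + O(Q(k)^2 h^2)$, so that part vanishes. For the high modes, the gain of $N \ge 0$ derivatives is what supplies the tail control: here I expect the main technical point to be a bound of the form $|m_h(k)| \le C|Q(k)|^2 |h| \le C'|k|^6|h|$ from the second-order Taylor remainder (valid when $|Q(k)h|$ is bounded), together with the damping factor $e^{-(2\pi/L)^2 k^2 t}$. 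The difficulty is that the remainder estimate $|e^{-z}-1+z| \le \tfrac12|z|^2 e^{|\Re z|}$, applied with $z = Q(k)h$, produces a factor $e^{|\Re Q(k) h|} = e^{(2\pi/L)^2 k^2 |h|}$ that grows in $k$; this must be absorbed. Since $\Re Q(k) \ge 0$, choosing $h > 0$ lets the exponential $e^{-Q(k)t}$ partially cancel it, but for $h < 0$ or to get uniformity one instead uses $|e^{-Q(k)h}-1| \le |Q(k)||h|e^{|\Re Q(k)||h|}$ and bounds $|m_h(k)| \le |Q(k)|(1 + e^{|Q(k)||h|}\cdots)$; the cleanest route is to observe that for the purpose of the \emph{limit} one only needs the tail $\sum_{|k|>K}(1+|k|^2)^{s-N}|\widehat\phi(k)|^2$ to be small (the convergent tail of $\phi \in \Hsper$) while controlling the multiplier crudely. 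This is where the argument is most delicate and where I would spend the most care.

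For the uniform operator bound \eqref{unifsgbd}, the situation is simpler because no derivative loss is permitted (the operator norm is taken on $\Hsper \to \Hsper$), and I would prove $\sup_{k}|m_h(k)| \le \overline C$ for all $0 \le |h| \ll 1$. Using the elementary inequality $|e^{-z} - 1 + z| \le \tfrac12 |z|^2$ valid for $\Re z \ge 0$ — which applies here for $h > 0$ since $\Re Q(k)h \ge 0$ — we get $|m_h(k)| = h^{-1}|e^{-Q(k)h}-1+Q(k)h| \le \tfrac12 |h| |Q(k)|^2$, which is \emph{not} bounded in $k$. So the naive pointwise bound fails, confirming that \eqref{unifsgbd} cannot follow from a crude multiplier estimate; rather one must use the full structure. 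The correct interpretation is that $m_h(k)$, as a multiplier, must be understood together with the smoothing: I would instead directly estimate the operator $h^{-1}(\cV(h)-\Id) - \cT$ by writing it via the integral representation $h^{-1}(\cV(h)-\Id)\phi - \cT\phi = h^{-1}\int_0^h (\cV(\tau) - \Id)\cT\phi\, d\tau$, valid for $\phi \in D(\cT^2)$, and then using that $\cV(\tau)$ is a contraction semigroup (from Theorem \ref{localexistlinear}) so that $\|(\cV(\tau)-\Id)\cT\phi\|_s \le \|\cV(\tau)\cT\phi\|_s + \|\cT\phi\|_s \le 2\|\cT\phi\|_s$, yielding $\|h^{-1}(\cV(h)-\Id)\phi - \cT\phi\|_s \le 2\|\cT\phi\|_s$. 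Because $\cT$ is bounded from $\Hsper$ to $\Hper^{s-3}$ but the operator norm in \eqref{unifsgbd} is presumably taken in a graph-norm or loss-of-derivatives sense consistent with Corollary \ref{cordervdsg}, I would finally reconcile the meaning of the operator norm $\|\cdot\|$ in \eqref{unifsgbd} with the derivative budget, interpreting it as the norm from $\Hsper$ into $\Hper^{s-N}$ with $N$ large enough (e.g.\ $N \ge 3$) that $\cT$ itself is bounded; under that reading the bound \eqref{unifsgbd} follows from the contraction property and the representation above. I expect the main obstacle throughout to be tracking the derivative loss carefully so that the polynomial growth of $|Q(k)| \sim |k|^3$ is always compensated either by the spectral gain of $N$ derivatives or by the exponential damping $e^{-(2\pi/L)^2 k^2 t}$, and making precise in which topology each estimate is asserted.
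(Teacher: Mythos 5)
Your Fourier-side strategy is sensible, but be aware that the paper does not prove this corollary by computation at all: it simply invokes Theorem 4.15 of Iorio--Iorio for $\widetilde{Q}(\partial)=\partial_x^2-\partial_x^3$ and then declares that \eqref{unifsgbd} "follows immediately" from \eqref{limdervdsg}. So any self-contained argument is already a different route, and yours contains two genuine gaps. First, the tail estimate for \eqref{limdervdsg}, which you explicitly leave open, is the entire content of the statement, and it only closes when $N\ge 3$: for $h>0$ the only $h$-uniform bound on the multiplier is $|m_h(k)|\le 2|Q(k)|$ (from $|e^{-z}-1|\le |z|$ for $\Re z\ge 0$), so the summand is dominated by $C(1+|k|^2)^{s-N+3}|\widehat{\phi}(k)|^2$, which is controlled by the tail of $\|\phi\|_s^2$ precisely when $N\ge 3$; with that domination, your low/high-mode split (equivalently, dominated convergence) finishes the proof, and the uniformity in $t\ge 0$ is free because $|e^{-Q(k)t}|\le 1$. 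For $N<3$ the claim at $t=0$ is unattainable for generic $\phi\in\Hsper$, since $\cT\phi=-\phi_{xxx}+\phi_{xx}$ need not even belong to $\Hper^{s-N}$; you should state this restriction rather than hope that "crude control" of the multiplier suffices. Second, you never handle $h<0$, which is needed for a two-sided limit at $t>0$: there $|e^{-Q(k)h}|=e^{(2\pi/L)^2k^2|h|}$ grows in $k$ and must be absorbed by $e^{-Q(k)t}$ using $t\ge |h|$, and this is exactly where the uniformity in $t$ requires an actual argument rather than a remark.

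Your treatment of \eqref{unifsgbd} is the stronger half and is essentially correct, though by a route different from the paper's. You rightly observe that $\sup_k|m_h(k)|=\infty$, so the bound cannot hold in the operator norm on $\Hsper$; it must be read as the norm from $\Hsper$ to $\Hper^{s-N}$ with $N\ge 3$, which is also what the paper's one-line derivation tacitly assumes (pointwise convergence in \eqref{limdervdsg} plus the uniform boundedness principle). Your identity $h^{-1}(\cV(h)-\Id)\phi-\cT\phi=h^{-1}\int_0^h(\cV(\tau)-\Id)\cT\phi\,d\tau$ is valid for $\phi\in D(\cT)$ (not $D(\cT^2)$), and combined with the contraction property of $\cV(\tau)$ on $\Hper^{s-3}$ it yields $\overline{C}=2\|\cT\|_{\Hsper\to\Hper^{s-3}}$ directly; this is cleaner and more explicit than either the citation or the Banach--Steinhaus argument, provided you commit to the $\Hsper\to\Hper^{s-3}$ interpretation from the start instead of leaving it as a hedge in the final sentence.
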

\begin{proof}
Follows from Theorem 4.15 in Iorio and Iorio \cite{IoIo01} upon definition of $\widetilde{Q}(\partial)f = (\partial_x^2 - \partial_x^3) f = ( -k^2 + ik^3) \widehat{f})^\vee$, $k \in \Z$. The second assertion follows immediately from \eqref{limdervdsg}.
\end{proof}

\begin{corollary}[smoothing estimate]
\label{corsmoothest}
For all $\phi \in \Hrper$, $r \in \R$ and all $\delta \geq 0$ there exists a uniform constant $K_\delta > 0$, depending only on $\delta$, such that
\begin{equation}
\label{smoothest}
\left\| \cV(t) \phi \right\|_{r + \delta} \leq K_\delta \Big[ 1 + \Big( \frac{1}{2t}\Big)^\delta \Big]^{1/2} \| \phi \|_{r},
\end{equation}
for all $t > 0$.
\end{corollary}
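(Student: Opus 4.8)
The plan is to reduce the operator estimate to a scalar multiplier bound on each Fourier mode and then optimize. First I would record the action of the semigroup on the Fourier side: from \eqref{vdsg}--\eqref{Qks} one has $\widehat{\cV(t)\phi}(k) = e^{-Q(k)t}\,\widehat{\phi}(k)$ for every $k \in \Z$, and since the dispersive contribution to $Q(k)$ is purely imaginary, $|e^{-Q(k)t}| = e^{-\Re Q(k)\,t} = e^{-(2\pi/L)^2 k^2 t}$. Writing out the norm with the Fourier characterization of $\Hrper$ then gives
\[
\| \cV(t)\phi \|_{r+\delta}^2 = L \sum_{k=-\infty}^\infty (1+|k|^2)^{r+\delta}\, e^{-2(2\pi/L)^2 k^2 t}\, |\widehat{\phi}(k)|^2 .
\]
Comparing this term-by-term with $\|\phi\|_r^2 = L\sum_k (1+|k|^2)^r |\widehat{\phi}(k)|^2$, it suffices to prove the pointwise multiplier estimate
\[
(1+|k|^2)^{\delta}\, e^{-2(2\pi/L)^2 k^2 t} \le K_\delta^2 \Big[ 1 + \Big(\tfrac{1}{2t}\Big)^\delta \Big]
\]
uniformly in $k \in \Z$ and $t > 0$, with $K_\delta$ independent of $r$ and $\phi$; the claimed bound then follows upon taking square roots.

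Second, I would establish this scalar inequality by separating the low and high modes. For $|k| \le 1$ one has $(1+|k|^2)^\delta \le 2^\delta$ and $e^{-2(2\pi/L)^2 k^2 t} \le 1$, so the left-hand side is bounded by $2^\delta$. For $|k| \ge 1$ I would write $(1+|k|^2)^\delta \le 2^\delta |k|^{2\delta}$ and set $a = (2\pi/L)^2$, so that
\[
|k|^{2\delta} e^{-2a k^2 t} = \frac{1}{(2at)^\delta}\,(2ak^2 t)^\delta e^{-2ak^2 t} \le \frac{1}{(2at)^\delta}\,\sup_{x \ge 0} x^\delta e^{-x} = \Big(\frac{\delta}{e}\Big)^\delta \frac{1}{a^\delta}\Big(\frac{1}{2t}\Big)^\delta,
\]
using the elementary optimization $\sup_{x\ge0} x^\delta e^{-x} = (\delta/e)^\delta$ (with the convention that this equals $1$ when $\delta = 0$). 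Combining the two ranges yields the multiplier bound with $K_\delta^2 := 2^\delta \max\{1, (\delta/e)^\delta a^{-\delta}\}$, which depends only on $\delta$ (the period $L > 0$ being fixed throughout), completing the argument.

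I do not anticipate a genuine obstacle here, since the smoothing is entirely driven by the parabolic real part $\Re Q(k) = (2\pi/L)^2 k^2$ of the symbol, while the dispersive part contributes only a unimodular factor; the one point requiring care is to keep the constant $K_\delta$ independent of both the base regularity $r$ and the datum $\phi$, which is guaranteed precisely because the reduction above is a term-by-term comparison of the two weighted $\ell^2$ sums. Alternatively, the estimate is a direct instance of the abstract smoothing property for the class of semigroups treated in Iorio and Iorio \cite{IoIo01}, Section 4.2, applied with $\mu = 1$ and $q(k) = -k^3$.
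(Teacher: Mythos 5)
Your argument is correct. The paper's own ``proof'' of this corollary is a one-line citation to Theorem 4.17 of Iorio and Iorio with $\mu=1$ and $Q(\partial)=-\partial_x^3+\partial_x^2$, and what you have written is precisely the standard proof of that cited result specialized to the symbol $Q(k)=(2\pi/L)^2k^2-i(2\pi/L)^3k^3$: the dispersive part is unimodular, the term-by-term comparison of the weighted $\ell^2$ sums reduces everything to the scalar multiplier bound, and the low/high mode split together with $\sup_{x\ge 0}x^\delta e^{-x}=(\delta/e)^\delta$ closes it. So there is no genuine difference of route, only that you supply the details the paper outsources. The one small point worth flagging is that your constant $K_\delta^2=2^\delta\max\{1,(\delta/e)^\delta a^{-\delta}\}$ with $a=(2\pi/L)^2$ depends on $L$ as well as $\delta$; since the period $L$ is fixed throughout the paper this is harmless and consistent with how the statement is used, but strictly speaking ``depending only on $\delta$'' should be read as ``depending only on $\delta$ and $L$''.
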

\begin{proof}
Follows from Theorem 4.17 in \cite{IoIo01} with $\mu = 1$ and $Q(\partial) = -\partial_x^3 + \partial_x^2$.
\end{proof}

Finally, we invoke a continuity result for the semigroup under consideration.

\begin{corollary}
\label{corcontsg}
For all $r \in \R$, $\delta \geq 0$ and $\epsilon > 0$, the mapping $t \mapsto \cV(t) \phi$ with $\phi  \in \Hrper$, is continuous with respect to the norm in $H^{r+\delta}_\mathrm{\tiny{per}}$. More precisely, for every $\alpha \in [0,1]$ there exists a constant $\widetilde{K} = \widetilde{K}(\delta,\alpha,\epsilon) > 0$ such that
\begin{equation}
\label{holdercont}
\| \cV(t) \phi - \cV(\tau) \phi \|_{r + \delta} \leq \widetilde{K} | t - \tau |^{\alpha} \| \phi \|_r,
\end{equation}
for all $t, \tau \geq \ep > 0$. In particular, $\cV(\cdot) \phi \in C((0, \infty); \mathscr{P})$.
\end{corollary}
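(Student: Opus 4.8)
The plan is to prove the Hölder bound \eqref{holdercont} at the two endpoints $\alpha=0$ and $\alpha=1$ and then interpolate. The endpoint $\alpha=0$ is immediate from the smoothing estimate of Corollary \ref{corsmoothest}: since $s\mapsto\big[1+\big(\tfrac{1}{2s}\big)^\delta\big]^{1/2}$ is nonincreasing, the triangle inequality gives, for $t,\tau\ge\ep$,
\[
\|\cV(t)\phi-\cV(\tau)\phi\|_{r+\delta}\le \|\cV(t)\phi\|_{r+\delta}+\|\cV(\tau)\phi\|_{r+\delta}\le A(\delta,\ep)\,\|\phi\|_r,
\]
with $A(\delta,\ep)=2K_\delta\big[1+\big(\tfrac{1}{2\ep}\big)^\delta\big]^{1/2}$.

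For the Lipschitz endpoint $\alpha=1$ I would represent $\cV(\cdot)\phi$ as a differentiable curve into $H^{r+\delta}_\mathrm{\tiny{per}}$ on $(0,\infty)$ and apply the fundamental theorem of calculus. Fix $s_0>0$ and write $\cV(s)\phi=\cV(s-\tfrac{s_0}{2})\,w$ with $w:=\cV(\tfrac{s_0}{2})\phi$, which by Corollary \ref{corsmoothest} belongs to $H^{\sigma}_\mathrm{\tiny{per}}$ for every $\sigma$, in particular to $H^{r+\delta+3}_\mathrm{\tiny{per}}$. Applying Corollary \ref{cordervdsg} to the smooth datum $w$ (taking $N=3$ therein) yields $\tfrac{d}{ds}\cV(s)\phi=(\partial_x^2-\partial_x^3)\cV(s)\phi=\cT\cV(s)\phi$, now as a limit in the stronger norm of $H^{r+\delta}_\mathrm{\tiny{per}}$. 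Hence for $t>\tau\ge\ep$,
\[
\cV(t)\phi-\cV(\tau)\phi=\int_\tau^t \cT\cV(s)\phi\,ds .
\]
Because $\cT$ loses three derivatives, $\|\cT\cV(s)\phi\|_{r+\delta}\le C\,\|\cV(s)\phi\|_{r+\delta+3}$, and invoking Corollary \ref{corsmoothest} with the gain $\delta+3$ gives, for $s\ge\ep$, the uniform bound $\|\cT\cV(s)\phi\|_{r+\delta}\le B(\delta,\ep)\|\phi\|_r$ with $B(\delta,\ep)=CK_{\delta+3}\big[1+\big(\tfrac{1}{2\ep}\big)^{\delta+3}\big]^{1/2}$. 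Integrating yields $\|\cV(t)\phi-\cV(\tau)\phi\|_{r+\delta}\le B(\delta,\ep)\,|t-\tau|\,\|\phi\|_r$.

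With both endpoints in hand, for arbitrary $\alpha\in[0,1]$ I would combine the bounds $X\le A$ and $X\le B|t-\tau|$ (where $X:=\|\cV(t)\phi-\cV(\tau)\phi\|_{r+\delta}$) by writing $X=X^{1-\alpha}X^{\alpha}\le A^{1-\alpha}\big(B|t-\tau|\big)^{\alpha}$, which is exactly \eqref{holdercont} with $\widetilde{K}=A^{1-\alpha}B^{\alpha}$ depending only on $\delta,\alpha,\ep$. Finally, since this holds for every $\delta\ge0$ with $r$ fixed, $\cV(t)\phi\in H^{r+\delta}_\mathrm{\tiny{per}}$ for all $\delta$ and $t\mapsto\cV(t)\phi$ is continuous into each such space on $(0,\infty)$; as the topology of $\mathscr{P}=C^\infty_\mathrm{\tiny{per}}$ is generated by the norms $\|\cdot\|_{\sigma}$, $\sigma\in\N$ (equivalently the $C^k_\mathrm{\tiny{per}}$-seminorms, via $H^{\sigma}_\mathrm{\tiny{per}}\hookrightarrow C^k_\mathrm{\tiny{per}}$), this gives $\cV(\cdot)\phi\in C((0,\infty);\mathscr{P})$.

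The decisive feature --- and the reason the statement is restricted to $t,\tau\ge\ep>0$ --- is that the smoothing constants $\big[1+(2s)^{-(\delta+3)}\big]^{1/2}$ blow up as $s\to0^+$, so control of the gained regularity is available only away from the origin; the estimate genuinely degenerates as $\ep\downarrow0$ whenever $\delta>0$. The one technical subtlety is that Corollary \ref{cordervdsg} delivers the time derivative only in a norm no stronger than that of the datum, so to differentiate $\cV(s)\phi$ in $H^{r+\delta}_\mathrm{\tiny{per}}$ starting from $\phi\in\Hrper$ one must first smooth (the splitting $\cV(s)\phi=\cV(s-\tfrac{s_0}{2})\cV(\tfrac{s_0}{2})\phi$ above); this is the only place where the two corollaries must be used in tandem. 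As an alternative route, the key pointwise bound can be read off directly on the Fourier side: with $a=\Re Q(k)=(2\pi/L)^2k^2$ and $|e^{-Q(k)t}-e^{-Q(k)\tau}|\le e^{-a\min(t,\tau)}\min\{2,\,|Q(k)|\,|t-\tau|\}$, the polynomial weight $(1+|k|^2)^{\delta}|Q(k)|^{2\alpha}$ is absorbed by the Gaussian $e^{-2a\ep}$ uniformly in $k$, which once more requires $\ep>0$.
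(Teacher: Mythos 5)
Your proof is correct, but it is worth noting that the paper does not actually prove this corollary: it simply cites Theorem 4.18 of Iorio--Iorio, where the H\"older bound is obtained directly on the Fourier side from the explicit multiplier $e^{-Q(k)t}$ --- essentially the ``alternative route'' you sketch in your final paragraph, where the weight $(1+|k|^2)^{\delta}|Q(k)|^{2\alpha}$ is absorbed by $e^{-2\Re Q(k)\,\ep}$. Your main argument is genuinely different and more structural: you treat \eqref{holdercont} as a formal consequence of the two preceding corollaries, proving the $\alpha=0$ endpoint from the smoothing estimate \eqref{smoothest} and the $\alpha=1$ endpoint from the differentiability statement \eqref{limdervdsg} combined with smoothing with gain $\delta+3$, then interpolating via $X\le A^{1-\alpha}(B|t-\tau|)^{\alpha}$. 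The one place requiring care --- differentiating $\cV(s)\phi$ in the \emph{stronger} norm $H^{r+\delta}_{\mathrm{per}}$ when $\phi$ only lies in $\Hrper$ --- you handle correctly by the splitting $\cV(s)\phi=\cV(s-\tfrac{s_0}{2})\cV(\tfrac{s_0}{2})\phi$, which also makes $s\mapsto\cT\cV(s)\phi=\cV(s-\tfrac{s_0}{2})\cT w$ continuous in $H^{r+\delta}_{\mathrm{per}}$ so that the fundamental theorem of calculus (or the vector-valued mean value inequality) applies; one should just take $s_0=\ep$ so the representation is valid on all of $[\ep,\infty)$. The payoff of your route is modularity --- it works verbatim for any semigroup enjoying the analogues of Corollaries \ref{cordervdsg} and \ref{corsmoothest} --- while the Fourier-side computation of the cited reference is shorter and yields the constants explicitly; both correctly isolate $\ep>0$ as the reason the estimate does not degenerate.
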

\begin{proof}
See Theorem 4.18 in \cite{IoIo01}.
\end{proof}

\subsection{The Cauchy problem for generalized KdVB equations with a source}

Let us now consider the Cauchy problem in $\Hsper$, with $s > 5/2$, for generalized KdVB equations with a source of the form \eqref{genKdVBF}, with nonlinear functions $f \in C^2(\R)$ and $g \in C^1(\R)$. It can be written as
\begin{equation}
\label{CpKdVBF}
\begin{aligned}
u_t - u_{xx} + u_{xxx} &= F(u,u_x),\\
u(0) &= \phi,
\end{aligned}
\end{equation}
with initial condition $u(0) = \phi \in \Hsper$ and where we define $F \in C^2(\R^2)$ as
\[
F(u,p) := g(u) - f'(u)p, \qquad (u,p) \in \R^2.
\]
In order to solve the Cauchy problem we apply Duhamel's principle and define the mapping
\begin{equation}
\label{inteq}
\cA u := \cV(t) \phi + \int_0^t \cV(t-\tau) F(u, u_x)(\tau) \, d\tau.
\end{equation}

The local well-posedness of the Cauchy problem \eqref{CpKdVBF} will follow from a standard procedure (see, e.g., Taylor \cite{TayPDE3-2e}, chapter 15): first, the linear part of the equation generates a $C_0$ semigroup in a certain Banach space $X$ (this step has been already verified by Theorem \ref{localexistlinear}); second, the nonlinear term $F$ is locally Lipschitz from $X$ to another Banach space $Y$; and, finally, the operator $\cA$ is a contraction in a closed ball in $C([0,T];X)$ for $T$ sufficiently small yielding a solution to the integral equation \eqref{inteq}. We start by establishing the Lipschitz property for the function $F$.

\begin{lemma}
\label{lemFlip}
Let $s > 5/2$ and $\delta \in (\tfrac{3}{2},2)$. Assume that $f \in C^2$, $g \in C^1$. Then $F = F(u,u_x)$ is locally Lipschitz from $\Hsper$ to $\Hper^{s-\delta}$. More precisely, for any 
\[
u,v \in \overline{B_M} = \{ w \in \Hsper \, : \, \| w \|_s \leq M \} \subset \Hsper,
\]
with $M > 0$ fixed but arbitrary, there holds the estimate
\begin{equation}
\label{E1}
\| F(u,u_x) - F(v,v_x) \|_{s-\delta} \leq L_s(\| u \|_s, \| v \|_s) \| u - v \|_s,
\end{equation}
where $L_s : [0,\infty) \times [0,\infty) \to (0,\infty)$, $L_s = L_s(\zeta_1,\zeta_2)$, is a continuous, positive function and non-decreasing with respect to each argument. In particular, there holds the estimate
\begin{equation}
\label{E2}
\| F(u,u_x) \|_{s-\delta} \leq L_s(\| u \|_s, 0) \| u  \|_s,
\end{equation}
for all $u \in \overline{B_M}$.
\end{lemma}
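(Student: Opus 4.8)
The plan is to prove \eqref{E1} by a termwise estimate and then read off \eqref{E2} as the (essentially) $v=0$ case. Three structural facts drive everything. First, since $s>5/2$ and $\delta<2$ we have $s-\delta>1/2$, so $\Hper^{s-\delta}$ is a Banach algebra, $\|ab\|_{s-\delta}\lesssim\|a\|_{s-\delta}\|b\|_{s-\delta}$, and in fact the sharper tame estimate $\|ab\|_{s-\delta}\lesssim\|a\|_{L^\infty}\|b\|_{s-\delta}+\|a\|_{s-\delta}\|b\|_{L^\infty}$ holds. Second, since $\delta>1$ there is the continuous embedding $\Hper^{s-1}\hookrightarrow\Hper^{s-\delta}$, so the derivative loss is harmless: $\|u_x-v_x\|_{s-\delta}\le\|u_x-v_x\|_{s-1}\le\|u-v\|_s$. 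Third, $s>5/2$ yields $\Hsper\hookrightarrow C^2_{\mathrm{per}}$, so on $\overline{B_M}$ the functions $u,v$ and their first derivatives are uniformly bounded in terms of $\|u\|_s,\|v\|_s$; in particular the ranges of $u,v$ lie in a fixed compact interval, on which $f',f'',g'$ are bounded.

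I would then use the decomposition
\[
F(u,u_x)-F(v,v_x)=\big(g(u)-g(v)\big)-f'(u)(u_x-v_x)-\big(f'(u)-f'(v)\big)v_x ,
\]
and estimate each piece in $\Hper^{s-\delta}$. The middle term is immediate from the tame/algebra estimate and the embedding, giving $\|f'(u)(u_x-v_x)\|_{s-\delta}\lesssim\big(\|f'(u)\|_{L^\infty}+\|f'(u)\|_{s-\delta}\big)\|u-v\|_s$. For the first and third terms I would apply the fundamental theorem of calculus,
\[
g(u)-g(v)=(u-v)\!\int_0^1\! g'\big(v+t(u-v)\big)\,dt,\qquad f'(u)-f'(v)=(u-v)\!\int_0^1\! f''\big(v+t(u-v)\big)\,dt,
\]
which is exactly where the hypotheses $g\in C^1$ and $f\in C^2$ are consumed. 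Applying the product estimate in $\Hper^{s-\delta}$ then reduces matters to bounding the scalar factor $\|u-v\|_s$ against the composition factors $\|g'(w)\|_{s-\delta}$ and $\|f''(w)\|_{s-\delta}$, with $w$ ranging over the segment joining $u$ and $v$ inside $\overline{B_M}$. Collecting the resulting multiplicative factors, each controlled by a continuous, non-decreasing function of $\|u\|_s$ and $\|v\|_s$ through the $L^\infty$ bounds on $f',f'',g'$ and the $C^2_{\mathrm{per}}$ norms, assembles into the advertised $L_s(\|u\|_s,\|v\|_s)$.

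The main obstacle is precisely the composition (Moser--Nemytskii) estimate in the fractional space $\Hper^{s-\delta}$: controlling $\|g(u)\|_{s-\delta}$, $\|f'(u)\|_{s-\delta}$ and the integral factors $\|g'(w)\|_{s-\delta}$, $\|f''(w)\|_{s-\delta}$. I would handle these through the Gagliardo characterization of the $\Hper^{s-\delta}$ seminorm, estimating second differences of the compositions by the first differences of the arguments, e.g. $|h(w(x))-h(w(y))|\le\|h'\|_{L^\infty}\|w'\|_{L^\infty}|x-y|$ together with the corresponding difference-of-difference identity for the Lipschitz estimate. The key point is that the kernel $|x-y|^{-1-2(s-\delta)}$ is integrated against a factor $|x-y|^{2}$, which converges on the bounded period interval exactly when $s-\delta<1$; this is where the lower bound $\delta>3/2$ is used (keeping $s-\delta$ below $1$ in the regime under consideration), while the upper bound $\delta<2$ secures the algebra threshold $s-\delta>1/2$. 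Once \eqref{E1} is in hand, \eqref{E2} follows from the same composition and product bounds applied directly to $F(u,u_x)=g(u)-f'(u)u_x$, being the case $v=0$ of \eqref{E1} up to the harmless constant $F(0,0)=g(0)$.
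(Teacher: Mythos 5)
Your overall strategy coincides with the paper's: the same three-term decomposition of $F(u,u_x)-F(v,v_x)$, the Banach algebra property of $\Hper^{s-\delta}$ for $s-\delta>\tfrac12$, the observation that $\| u_x-v_x\|_{s-\delta}\le \|u-v\|_{s+1-\delta}\le\|u-v\|_s$ (this is where the hypothesis $\delta>\tfrac32$ is actually consumed in the paper), and the $L^\infty$ control of $u,v$ on $\overline{B_M}$ via $\Hsper\hookrightarrow L^\infty$, which confines the arguments of $f'$, $f''$, $g'$ to a fixed compact interval. Where you diverge is in how the composition differences are bounded: the paper simply invokes the Lipschitz property of $f'$ and $g$ on that compact interval and asserts the Sobolev estimates \eqref{Lipest} (e.g. $\|g(u)-g(v)\|_{s-\delta}\le L_g\|u-v\|_{s-\delta}$) before assembling $L_s$; you instead attempt to prove such estimates via the fundamental theorem of calculus plus a Gagliardo--Slobodeckij seminorm computation.

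That attempted justification contains a genuine error. You claim the seminorm computation closes because ``$s-\delta<1$ in the regime under consideration,'' attributing this to $\delta>\tfrac32$. This is false: the lemma is stated for \emph{every} $s>\tfrac52$ with $\delta\in(\tfrac32,2)$, so $s-\delta$ is only bounded below by $\tfrac12$ and can be arbitrarily large (e.g. $s=4$, $\delta=\tfrac85$ gives $s-\delta=\tfrac{12}{5}$). Your kernel computation --- integrating $|x-y|^{2}$ against $|x-y|^{-1-2(s-\delta)}$ --- converges precisely when $s-\delta<1$, i.e. only when $s<1+\delta<3$; moreover a first-difference bound of the form $|h(w(x))-h(w(y))|\le\|h'\|_\infty\|w'\|_\infty|x-y|$ can never control a seminorm of order exceeding $1$. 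For $s\ge 1+\delta$ one needs a genuine higher-order Moser/chain-rule composition estimate, which in turn demands more derivatives of $f$ and $g$ than you use. So as written your argument establishes the lemma only in the restricted range $\tfrac52<s<1+\delta$, not for all $s>\tfrac52$. (Your closing remark on \eqref{E2} is a fair catch: setting $v=0$ in \eqref{E1} leaves the additive term $|g(0)|\,\|1\|_{s-\delta}$, which is not of the form $L_s(\|u\|_s,0)\|u\|_s$; the paper's insertion of $|g'(0)|$ into \eqref{defiLipconst} does not repair this, so both arguments implicitly need $g(0)=0$ or a minor restatement of \eqref{E2}.)
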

\begin{proof}
Let $u,v \in \overline{B_M}$, $s > 5/2$ and $\delta \in (\tfrac{3}{2},2)$. Since for each $s-\delta > 1/2$, $\Hper^{s-\delta}$ is a Banach algebra (see Theorem 3.200 in \cite{IoIo01}), then there exists a constant $C_{s,\delta} > 0$, depending only on $s$ and $\delta$, such that
\[
\begin{aligned}
\| F(u,u_x) - F(v,v_x) \|_{s-\delta} &\leq \| g(u) - g(v) \|_{s-\delta} + \| f'(u) (v_x - u_x) \|_{s-\delta} \\
&\leq \| g(u) - g(v) \|_{s-\delta}+ C_{s,\delta} \| f'(u) \|_{s-\delta}\| u_x - v_x \|_{s-\delta}+ \\ &\quad + C_{s,\delta} \|f'(v) - f'(u) \|_{s-\delta}\| v_x \|_{s-\delta}.\\
\end{aligned}
\]
In view that $\| u \|_{s-\delta} \leq \| u \|_{s+1-\delta}$ and $\|u_x \|_{s-\delta} \leq \|u \|_{s+1-\delta}$ we obtain
\begin{equation}
\label{auxi1}
\begin{aligned}
\| F(u,u_x) - F(v,v_x) \|_{s-\delta} &\leq \| g(u) - g(v) \|_{s-\delta}  + C_{s,\delta} \| f'(u) \|_{s-\delta}\| u - v \|_{s+1-\delta}+ \\
&  \quad + C_{s,\delta} \|f'(v) - f'(u) \|_{s-\delta}\| v \|_{s+1-\delta}.\\
\end{aligned}
\end{equation}
On the other hand, it is well known that for any $s > \tfrac{1}{2}$ the $H^s$ norm controls the $L^\infty$ norm,
\[
\| u \|_\infty \leq \sum_{k \in \Z} |\widehat{u}(k)| \leq C_{s,L} \left( \sum_{k \in \Z} \Big( 1+\frac{4 \pi^2 k^2}{L^2}\Big)^s | \widehat{u}(k) |^2 \right)^{1/2} = C_{s,L} \| u \|_s,
\]
where
\[
C_{s,L} = \left( \sum_{k \in \Z} \Big( 1+\frac{4 \pi^2 k^2}{L^2}\Big)^{-s} \right)^{1/2} < \infty,
\]
is a uniform constant depending only on $s$ and $L$. Therefore, for all $u,v \in \overline{B_M}$ there holds $\|u\|_\infty, \|v\|_\infty \leq 2C_{s,L}M < \infty$ . Since $f, g$ and $f'$ are continuous in the compact set $[-2C_{s,L}M, 2C_{s,L}M]$ then they are locally Lipschitz. Therefore, we deduce the existence of uniform constants $L_f, L_g > 0$, depending only on $s, M$ and $\delta$, such that
\begin{equation}
\label{Lipest}
\begin{aligned}
\| f'(u) - f'(v) \|_{s-\delta} &\leq L_f \|u - v \|_{s-\delta},\\
\| g(u) - g(v) \|_{s-\delta} &\leq L_g \|u - v \|_{s-\delta},\\
\| g'(u) - g'(v) \|_{s-\delta} &\leq L_g \|u - v \|_{s-\delta},\\
\| f'(u) \|_{s-\delta} &\leq L_f \| u \|_{s-\delta} + |f'(0)|,\\
\| g'(u) \|_{s-\delta} &\leq L_g \| u \|_{s-\delta} + |g'(0)|,
\end{aligned}
\end{equation}
for all $u,v \in \overline{B_M}$. Upon substitution in estimate \eqref{auxi1} we arrive at
\[
\begin{aligned}
\| F(u,u_x) - F(v,v_x) \|_{s-\delta} &\leq L_g \| u -v \|_s + L_f C_{s,\delta} \| u \|_{s-\delta} \| u - v \|_{s+1-\delta} \\
&\quad  + C_{s,\delta} |f'(0)| \|u-v \|_{s+1-\delta}\\
&\quad + L_f C_{s,\delta} \|v\|_{s+1-\delta} \|u-v \|_{s-\delta}\\
&\leq \Big[ L_g + L_f C_{s,\delta} \| u \|_{s} + C_{s,\delta} |f'(0)| + L_f C_{s,\delta} \|v\|_{s}\Big] \| u- v \|_s,
\end{aligned}
\]
in view that $\delta > \tfrac{3}{2}$ implies that $\| u \|_{s+1-\delta} \leq \|u\|_s$ for all $u \in \Hsper$. Hence, we have proved estimate \eqref{E1} provided that we define,
\begin{equation}
\label{defiLipconst}
L_s(\zeta_1, \zeta_2) := L_f C_{s,\delta} \zeta_1 + L_f C_{s,\delta} \zeta_2 + L_g + C_{s,\delta} |f'(0)|  + C_{s,\delta} |g'(0)|,
\end{equation}
which clearly satisfies the desired properties. This proves the lemma.
\end{proof}

\begin{remark}
Notice that we have added the term $|g'(0)|$ to the expression for $L_s(\cdot, \cdot)$, a term which did not appear in the previous estimate. This will be useful later on.
\end{remark}

Now let $\phi \in \Hsper$, $s > 5/2$. For any $\beta > 0$ fixed but arbitrary, and for some $T > 0$ to be chosen later, we denote
\[
Z_{\beta,T} := \Big\{ u \in C([0,T]; \Hsper) \, : \, \sup_{t \in [0,T]} \| u(t) - \phi \|_s \leq \beta \Big\}.
\]
It is not hard to verify that $Z_{\beta,T}$ is closed under the norm
\[
\| u \|_{s,T} := \sup_{t \in [0,T]} \| u(t) \|_s.
\]
(details are left to the reader). We now prove that the operator defined in \eqref{inteq} is contractive on $Z_{\beta,T}$ for $T > 0$ chosen sufficiently small.

\begin{lemma}
\label{lemmildsol}
Let $s > 5/2$ and $\phi \in \Hsper$. Then we can find $T > 0$ sufficiently small such that the Cauchy problem \eqref{CpKdVBF} has a unique mild solution $u \in C([0,T];\Hsper)$ (that is,  a solution to the integral equation \eqref{inteq}). Moreover, the data-solution map, $\phi \mapsto u$, is continuous.
\end{lemma}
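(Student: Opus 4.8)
The plan is to realize the mild solution as the unique fixed point of the integral operator $\cA$ from \eqref{inteq}, via Banach's contraction principle on the closed set $Z_{\beta,T}$. The crux is to control the Duhamel term $\int_0^t \cV(t-\tau)F(u,u_x)(\tau)\,d\tau$ in the $\Hsper$-norm. Since Lemma \ref{lemFlip} only places $F(u,u_x)(\tau)$ in the weaker space $\Hper^{s-\delta}$ (a loss of $\delta$ derivatives), I would recover those derivatives with the smoothing estimate of Corollary \ref{corsmoothest}, applied with $r=s-\delta$, which gives
\[
\| \cV(t-\tau) F(u,u_x)(\tau) \|_s \leq K_\delta \Big[ 1 + \Big( \tfrac{1}{2(t-\tau)}\Big)^\delta \Big]^{1/2} \| F(u,u_x)(\tau) \|_{s-\delta}.
\]
Integrating in $\tau$ and using $[1+a]^{1/2}\leq 1+a^{1/2}$, the prefactor yields the kernel $\int_0^t\bigl[1+(2(t-\tau))^{-\delta/2}\bigr]\,d\tau$, which I would package into a single continuous, nondecreasing function $\theta(T)$ with $\theta(T)\to 0$ as $T\to 0^+$.

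I would then verify the two hypotheses of the contraction principle. For the self-mapping property, every $u\in Z_{\beta,T}$ obeys $\|u(t)\|_s\leq M:=\|\phi\|_s+\beta$, so estimate \eqref{E2} and the monotonicity of $L_s$ give $\|F(u,u_x)(\tau)\|_{s-\delta}\leq L_s(M,0)\,M$, whence $\|\cA u(t)-\phi\|_s \leq \|\cV(t)\phi-\phi\|_s + \theta(T)\,L_s(M,0)\,M$. The first term tends to zero uniformly on $[0,T]$ as $T\to 0^+$ by the strong continuity of the semigroup (Theorem \ref{localexistlinear}), while the second is governed by $\theta(T)$; choosing $T$ small makes each at most $\beta/2$, so $\cA(Z_{\beta,T})\subseteq Z_{\beta,T}$. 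For the contraction property I would combine the same smoothing bound with the Lipschitz estimate \eqref{E1} to obtain $\|\cA u - \cA v\|_{s,T} \leq \theta(T)\,L_s(M,M)\,\|u-v\|_{s,T}$, a contraction once $\theta(T)\,L_s(M,M)<1$. That $\cA u\in C([0,T];\Hsper)$ follows from the strong continuity of $\cV(\cdot)$ and Corollary \ref{corcontsg}. Banach's theorem then produces a unique fixed point $u\in Z_{\beta,T}$, i.e.\ a mild solution of \eqref{CpKdVBF}; uniqueness in the full space $C([0,T];\Hsper)$ is then upgraded by a short Gronwall-type argument on the difference of two solutions.

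The main obstacle is the integrability of the singular kernel $\bigl[1+(2(t-\tau))^{-\delta/2}\bigr]$ as $\tau\to t$: this demands $\delta/2<1$, i.e.\ $\delta<2$, which is exactly why Lemma \ref{lemFlip} is stated for $\delta\in(\tfrac32,2)$. The lower bound $\delta>\tfrac32$ is needed so that $\Hper^{s-\delta}$ is a Banach algebra and the Lipschitz estimate holds, whereas the upper bound $\delta<2$ guarantees both $\theta(T)<\infty$ and $\theta(T)\to 0$. It is precisely this two-sided constraint, balancing the derivative loss incurred by $F$ against the smoothing gain of $\cV(t)$, that allows the iteration to close.

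Finally, for continuity of the data-solution map I would take nearby data $\phi_1,\phi_2$ with solutions $u_1,u_2$. Since $T$ depends only on $\|\phi\|_s$, for $\phi_2$ in a fixed neighborhood of $\phi_1$ one can fix a common existence time $T$ on which both solutions remain in a common ball of radius $M$. Subtracting the integral equations and applying the contractive bound on $\cV(t)$ from Theorem \ref{localexistlinear} to the linear term gives
\[
\| u_1 - u_2 \|_{s,T} \leq \| \phi_1 - \phi_2 \|_s + \theta(T)\, L_s(M,M)\, \| u_1 - u_2 \|_{s,T}.
\]
Shrinking $T$ so that $\theta(T)\,L_s(M,M)\leq \tfrac12$ yields $\|u_1-u_2\|_{s,T}\leq 2\|\phi_1-\phi_2\|_s$, which establishes the (in fact Lipschitz) continuity of $\phi\mapsto u$ and completes the proof.
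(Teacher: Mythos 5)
Your proposal is correct and follows essentially the same route as the paper: a Banach fixed-point argument on $Z_{\beta,T}$, using the smoothing estimate of Corollary \ref{corsmoothest} with $r=s-\delta$ to compensate the derivative loss in Lemma \ref{lemFlip}, with the integrable singular kernel giving a factor of order $T+T^{1-\delta/2}$ that is made small. The only cosmetic difference is that the paper proves continuity of the data-solution map via Gronwall's inequality rather than your sup-norm absorption argument; both work under the same smallness condition on $T$.
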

\begin{proof}
Let us show that if $u \in Z_{\beta, T}$ then $\cA u \in C([0,T];\Hsper)$. To that end, take $0 < t_1, < t_2 < T$. From the expression for $\cA$ we obtain,
\begin{equation}
\label{difA}
\begin{aligned}
\| \cA u(t_1) - \cA u(t_2) \|_s &\leq \| (\cV(t_1) - \cV(t_2) ) \phi \|_s \\
&+ \int_0^{t_1} \| (\cV(t_1-\tau) - \cV(t_2-\tau)) F(u,u_x)(\tau) \|_s \, d\tau\\
&+ \int_{t_1}^{t_2} \| \cV(t_2-\tau) F(u,u_x)(\tau) \|_s \, d\tau\\
&=: I_1 + I_2 + I_3.
\end{aligned}
\end{equation}

From Theorem \ref{localexistlinear}, $\{ \cV(t) \}_{t\geq 0}$ is a $C_0$-semigroup in $\Hsper$ and from standard semigroup properties we have that
\[
I_1 = \| (\cV(t_1) - \cV(t_2) ) \phi \|_s \to 0,
\]
as $t_2 \to t_1$ for any $\phi \in \Hsper$.

In order to compute the limit of $I_2$ we apply Theorem \ref{localexistlinear}, estimate \eqref{smoothest} with $\delta \in (\tfrac{3}{2}, 2)$ and Lemma \ref{lemFlip} (more precisely, estimate \eqref{E2} because $Z_{\beta,T} \subset \overline{B_M}$ with $M = \beta + \| \phi \|_s$ and with $r = s - \delta$). The result is
\[
\begin{aligned}
\| (\cV(t_1-\tau) - \cV(t_2-\tau)) &F(u,u_x)(\tau) \|_s \leq K_\delta \Big[ 1 + \Big( \frac{1}{2(t_1-\tau)} \Big)^{\delta} \Big]^{1/2} \| F(u,u_x)(\tau) \|_{s-\delta}\\
&\leq K_\delta \Big[ 1 + \Big( \frac{1}{2(t_1-\tau)} \Big)^{\delta} \Big]^{1/2} \!\!\sup_{\tau \in [0,T]} \big\{ L_s(\| u(\tau) \|_s,0) \| u(\tau) \|_s \big\},
\end{aligned}
\]
for all $0 < \tau < t_1$. Since $\delta < 2$, the right hand side of last inequality is integrable in $\tau \in (0,t_1)$. Hence, upon application of the Dominated Convergence Theorem, we obtain
\[
\lim_{t_2 \to t_1} I_2 =  \int_0^{t_1}  \lim_{t_2 \to t_1} \| (\cV(t_1-\tau) - \cV(t_2-\tau)) F(u,u_x)(\tau) \|_s \, d\tau = 0.
\]

In the same fashion, to compute the limit of $I_3$ we use estimates \eqref{smoothest} and \eqref{E2} to arrive at
\[
\begin{aligned}
\|\cV(t_2-\tau) F(u,u_x)(\tau) \|_s &\leq K_\delta \Big[ 1 + \Big( \frac{1}{2(t_2 - \tau)} \Big)^{\delta} \Big]^{1/2} \| F(u,u_x)(\tau) \|_{s-\delta}\\
&\leq K_\delta \Big[ 1 + \Big( \frac{1}{2(t_2 - \tau)} \Big)^{\delta} \Big]^{1/2}  L_s(\| u(\tau)\|_s, 0) \| u(\tau)\|_s,
\end{aligned}
\]
for every $\tau \in (t_1,t_2)$.  Now, if $u \in Z_{\beta,T}$ then we clearly have $\| u(\tau)\|_s \leq \beta + \| \phi \|_s$, yielding
\[
\begin{aligned}
I_3 &= \int_{t_1}^{t_2} \|\cV(t_2-\tau) F(u,u_x)(\tau) \|_s \, d\tau\\
&\leq K_\delta L_s(\beta + \| \phi \|_s,0) (\beta + \| \phi \|_s) \int_{t_1}^{t_2} \Big[ 1 + \Big( \frac{1}{2(t_2 - \tau)} \Big)^{\delta} \Big]^{1/2} \, d\tau\\
&\leq C K_\delta L_s(\beta + \| \phi \|_s,0) (\beta + \| \phi \|_s)  \big( t_2 - t_1 + (t_2 - t_1)^{1-\delta/2}\big) \to 0,
\end{aligned}
\]
as $t_2 \to t_1$, in view that $\delta \in (\tfrac{3}{2},2)$. Hence, we conclude that
\[
\| \cA u(t_1) - \cA u(t_2) \|_s \to 0, \qquad \text{as } \, t_2 \to t_1.
\]
This shows that $\cA u(t) \in \Hsper$ for all $t \in [0,T]$ and that $\cA u \in C([0,T]; \Hsper)$.

Now we shall choose $T > 0$ sufficiently small to guarantee that $\cA(Z_{\beta,T}) \subset Z_{\beta,T}$, as well as the contraction property for $\cA$. From elementary properties of $C_0$-semigroups we can always choose $T_1 > 0$ such that
\[
\| \cV(t) \phi - \phi \|_s < \tfrac{1}{2}\beta, \qquad \forall t \in [0,T_1].
\]
Take $u \in Z_{\beta,T}$. Hence, from estimate \eqref{E2} and for $\delta \in (\tfrac{3}{2},2)$ we have
\[
\begin{aligned}
\Big\| \int_{0}^{t} \cV(t -\tau) F(u,u_x)(\tau) \, d\tau \Big\|_s &\leq \int_{0}^{t} \big\| \cV(t-\tau) F(u,u_x)(\tau) \big\|_s \, d\tau  \\
&\leq K_\delta L_s(\beta + \| \phi \|_s,0) (\beta + \| \phi \|_s) \int_{0}^{t} \Big[ 1 + \Big( \frac{1}{2(t - \tau)} \Big)^{\delta} \Big]^{1/2} \, d\tau \\
&\leq C K_\delta L_s(\beta + \| \phi \|_s,0) (\beta + \| \phi \|_s) \int_{0}^{t} 1 + (t-\tau)^{-\delta/2} \, d\tau\\
&\leq C K_\delta L_s(\beta + \| \phi \|_s,0) (\beta + \| \phi \|_s) (T + T^{1-\delta/2})\\
&< \tfrac{1}{2}\beta,
\end{aligned}
\]
provided that we choose $T < T_1 \ll 1$ small enough. Hence, $\cA(Z_{\beta,T}) \subset Z_{\beta,T}$. 

Finally, let $u,v \in Z_{\beta,T}$. From similar arguments we obtain
\[
\begin{aligned}
\| \cA u(t) - \cA v(t) \|_s &\leq \int_0^{t} \| \cV(t-\tau) (F(u,u_x)(\tau) - F(v,v_x)(\tau)) \|_s \, d\tau\\
&\leq K_\delta \int_0^t \Big[ 1 + \Big( \frac{1}{2(t - \tau)} \Big)^{\delta} \Big]^{1/2} \| \cV(t-\tau) (F(u,u_x)(\tau) - F(v,v_x)(\tau)) \|_s \, d\tau\\
&\leq C K_\delta L_s(\beta + \| \phi \|_s, \beta + \| \phi \|_s) (T + T^{1-\delta/2}) \sup_{t \in [0,T]} \| u(t) - v(t) \|_s\\
&< \tfrac{1}{2} \| u - v \|_{s,T},
\end{aligned}
\]
where $0 < T \ll 1$ is sufficiently small such that
\begin{equation}
\label{boundC}
C_\phi (T + T^{1-\delta/2}) < \tfrac{1}{2},
\end{equation}
with $C_\phi = C K_\delta L_s(\beta + \| \phi \|_s, \beta + \| \phi \|_s) > 0$. Clearly, $T$ depends on $\| \phi \|_s$. Therefore, we conclude that there exists a sufficiently small time $T = T(\| \phi \|_s) > 0$ such that $\cA(Z_{\beta,T}) \subset Z_{\beta,T}$ and $\cA$ is a contractive mapping on $Z_{\beta,T}$. Upon application of Banach's fixed point theorem we deduce the existence of a unique fixed point $u \in Z_{\beta,T}$ of $\cA$, which is a solution to the integral equation \eqref{inteq} and, consequently, a unique mild solution to the Cauchy problem \eqref{CpKdVBF}.

Finally, in order to verify the continuity of the data-solution map, for given $\phi$ and $\psi$ in $\Hsper$, let us denote as $u$ and $v$ the solutions to the Cauchy problem with initial data $u(0) = \phi$ and $v(0) = \psi$, respectively. Then from Theorem \ref{localexistlinear}, Corollary \ref{corsmoothest} and Lemma \ref{lemFlip}, we obtain
\[
\begin{aligned}
\| u(t) - v(t) \|_s &\leq \| \cV(t) \phi - \cV(t) \psi \|_s + \int_0^t \| \cV(t-\tau)(F(u,u_x)(\tau) - F(v,v_x)(\tau)) \|_{s} \, d\tau \\
&\leq \| \phi - \psi \|_s + K_\delta \int_0^t \Big[ 1 + \Big( \frac{1}{2(t - \tau)} \Big)^{\delta} \Big]^{1/2} \| F(u,u_x)(\tau) - F(v,v_x)(\tau) \|_{s-\delta} \, d\tau\\
&\leq \| \phi - \psi \|_s + K_\delta L_s(M_s,M_s) (T + T^{1-\delta/2}) \int_0^t \| u(\tau) - v(\tau) \|_s \, d \tau,
\end{aligned}
\]
for any $\delta \in (\tfrac{3}{2},2)$  and where
\[
M_s := \max \Big\{ \sup_{t \in [0,T]} \| u(t) \|_s, \sup_{t \in [0,T]} \| v(t) \|_s \Big\}.
\]
Gronwall's inequality then yields
\[
\| u(t) - v(t) \|_s \leq C_{s,T} \| \phi - \psi \|_s,
\]
for all $t \in [0,T]$ and for some constant $C_{s,T} > 0$ depending only on $s$ and $T$. This shows the continuity of the mapping $\phi \mapsto u$ and the lemma is proved.
\end{proof}

Next, we prove that the unique mild solution from Lemma \ref{lemmildsol} is, in fact, a strong solution to the Cauchy problem.

\begin{lemma}
\label{lemstrongsol}
Under the hypotheses of Lemma \ref{lemmildsol}, the unique mild solution $u \in C([0,T]; \Hsper)$ to the Cauchy problem \eqref{CpKdVBF} satisfies $u \in C^1([0,T]; \Hsmdper)$.
\end{lemma}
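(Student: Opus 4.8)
The plan is to upgrade the mild solution $u\in C([0,T];\Hsper)$ of Lemma \ref{lemmildsol} to a strong one by differentiating the integral equation \eqref{inteq} in time and showing that the resulting derivative $u_t=\cT u+F(u,u_x)$ is continuous into $\Hsmdper$ on the whole closed interval $[0,T]$, the endpoint $t=0$ included. I would split $u(t)=\cV(t)\phi+w(t)$, with $w(t):=\int_0^t\cV(t-\tau)F(u,u_x)(\tau)\,d\tau$, treat the two summands separately, and collect at the end the identity $u_t(t)=\cT\cV(t)\phi+F(u,u_x)(t)+\cT w(t)=\cT u(t)+F(u,u_x)(t)$.

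For the semigroup summand I would invoke Corollary \ref{cordervdsg} with $N=2$: estimate \eqref{limdervdsg} shows that the difference quotients $h^{-1}\bigl(\cV(t+h)-\cV(t)\bigr)\phi$ converge to $\partial_x^2\cV(t)\phi-\partial_x^3\cV(t)\phi=\cT\cV(t)\phi$ in the $\Hsmdper$-norm, \emph{uniformly with respect to} $t\geq0$. Since each quotient is continuous in $t$ by the strong continuity of $\{\cV(t)\}_{t\geq0}$, the uniform limit $t\mapsto\cT\cV(t)\phi$ is itself continuous in $\Hsmdper$ on all of $[0,T]$; hence $\cV(\cdot)\phi\in C^1([0,T];\Hsmdper)$ with derivative $\cT\cV(\cdot)\phi$. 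It is exactly this uniformity down to $t=0$ that supplies differentiability at the initial time, where the purely dispersive contribution $-\partial_x^3\phi$ would otherwise be delicate.

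For the Duhamel summand I would differentiate $w$ directly, writing $h^{-1}(w(t+h)-w(t))$ as a boundary term $h^{-1}\int_t^{t+h}\cV(t+h-\tau)F(u,u_x)(\tau)\,d\tau$ plus an interior term. The boundary term converges to $F(u,u_x)(t)$ by strong continuity of $\cV(\cdot)$ together with the continuity of $\tau\mapsto F(u,u_x)(\tau)$ into $\Hper^{s-\delta}$, $\delta\in(\tfrac32,2)$, furnished by Lemma \ref{lemFlip}; note $\Hper^{s-\delta}\hookrightarrow\Hsmdper$. For $t>0$ the smoothing estimate \eqref{smoothest} of Corollary \ref{corsmoothest} renders $w(t)=u(t)-\cV(t)\phi$ regular enough that $\cT w(t)\in\Hsmdper$, so the interior term converges to $\cT w(t)$, giving $w'(t)=F(u,u_x)(t)+\cT w(t)$; at the endpoint the boundary-term computation yields $w'(0)=F(\phi,\phi_x)\in\Hper^{s-\delta}\hookrightarrow\Hsmdper$. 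Continuity of $t\mapsto F(u,u_x)(t)$ in $\Hsmdper$ on $[0,T]$ follows from Lemma \ref{lemFlip} and $u\in C([0,T];\Hsper)$, while continuity of $t\mapsto\cT w(t)$ on $(0,T]$ follows from the smoothing and continuity estimates \eqref{smoothest}, \eqref{holdercont}.

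Collecting the two parts gives $u\in C^1([0,T];\Hsmdper)$ with $u_t=-u_{xxx}+u_{xx}+F(u,u_x)$, so that $u$ is a strong solution of \eqref{CpKdVBF}. I expect the main obstacle to lie precisely at the initial time $t=0$: there the third-order dispersive term is borderline for the target space $\Hsmdper$, and the difference quotients of the linear flow must be controlled uniformly down to $t=0$ rather than merely on compact subintervals of $(0,T]$. This is the one step that cannot be read off from the interior smoothing alone, and it is supplied by the uniform-in-$t\geq0$ statement of Corollary \ref{cordervdsg}; once it is in place, continuity of $u_t$ across the endpoint, and hence on all of $[0,T]$, follows by combining it with the smoothing estimate \eqref{smoothest} and the continuity of the nonlinear term from Lemma \ref{lemFlip}.
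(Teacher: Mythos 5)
Your overall strategy is the same as the paper's: split $u=\cV(\cdot)\phi+w$ via Duhamel, use Corollary \ref{cordervdsg} for the linear part, and split the difference quotient of $w$ into a boundary term (which converges to $F(u,u_x)(t)$ by continuity of the semigroup and a mean-value argument) and an interior term. The linear part and the boundary term are handled exactly as in the paper.

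The gap is in the interior term. You assert that the smoothing estimate \eqref{smoothest} makes $w(t)$ ``regular enough that $\cT w(t)\in\Hsmdper$'' for $t>0$. This does not follow from a single application of \eqref{smoothest}: to have $\partial_x^3 w(t)\in \Hsmdper$ you need $w(t)\in H^{s+1}_{\mathrm{per}}$, i.e.\ a gain of $\delta'=1+\delta>5/2$ derivatives over the forcing $F(u,u_x)(\tau)\in H^{s-\delta}_{\mathrm{per}}$, and the corresponding singularity $(t-\tau)^{-\delta'/2}$ in \eqref{smoothest} is then non-integrable over $\tau\in(0,t)$ (integrability requires $\delta'<2$). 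What \eqref{smoothest} actually yields is $w(t)\in H^{s+\sigma}_{\mathrm{per}}$ only for $\sigma<2-\delta<1/2$. Consequently neither the membership $\cT w(t)\in\Hsmdper$ nor the convergence of the interior difference quotient to $\cT w(t)$ is justified as written; the latter would in any case require an integrable majorant uniform in $h$ to pass to the limit under the integral, which you do not produce. (One could try to repair the regularity claim by bootstrapping the Duhamel formula from positive times, but you do not do this, and it would still leave the interchange of limit and integral, and the behavior as $t\to 0^+$, unaddressed.)

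The paper avoids the issue by never separating $\partial_x^3 w(t)$ from the difference quotient: it writes the interior contribution as $\int_0^t \cV(t-\tau)\bigl(h^{-1}(\cV(h)-\Id)+\partial_x^3-\partial_x^2\bigr)F(u,u_x)(\tau)\,d\tau$, bounds the bracketed operator uniformly in $h$ by \eqref{unifsgbd}, majorizes the integrand by $K_\delta\bigl[1+\bigl(2(t-\tau)\bigr)^{-\delta}\bigr]^{1/2}$ times a constant (integrable since $\delta<2$), and concludes by dominated convergence combined with the uniform limit \eqref{limdervdsg}. This cancellation between the difference quotient of the semigroup and its generator, kept together inside the integral, is the one idea your proposal is missing.
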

\begin{proof}
We need to prove that
\begin{equation}
\label{limC1}
\lim_{h \to 0} \big\| h^{-1} \big(u(t+h) - u(t)\big) + u_{xxx} - u_{xx} - F(u,u_x) \big\|_{s-2} = 0.
\end{equation}
Substitute the expression for the mild solution of \eqref{inteq}. This yields,
\begin{equation}
\label{lae25}
\begin{aligned}
h^{-1} \big(u(t+h) - u(t)\big) + u_{xxx} - u_{xx} &- F(u,u_x) = h^{-1}  \big(\cV(t+h) - \cV(t) \big) \phi + \\
& + \partial_x^3 ( \cV(t) \phi )  - \partial_x^2 ( \cV(t) \phi ) +\\
&  + h^{-1} \int_0^t \big( \cV(t+h-\tau) - \cV(t-\tau) \big) F(u,u_x)(\tau) \, d\tau +\\
& + \partial_x^3 \int_0^t \cV(t-\tau) F(u,u_x)(\tau) \, d\tau +\\
& -  \partial_x^2 \int_0^t \cV(t-\tau) F(u,u_x)(\tau) \, d\tau +\\
& + h^{-1} \int_t^{t+h} \cV(t+h-\tau) F(u,u_x)(\tau) \, d\tau +\\
& - F(u,u_x)(t).
\end{aligned}
\end{equation}
It is to be noticed that Corollary \ref{cordervdsg} already implies that
\[
\big\| h^{-1} \big(\cV(t+h) - \cV(t) \big) \phi + \partial_x^3 ( \cV(t) \phi )  - \partial_x^2 ( \cV(t) \phi )  \big\|_{s-2} \to 0,
\]
as $h \to 0$. The last two terms of \eqref{lae25} are bounded by
\[
h^{-1} \int_t^{t+h} \cV(t+h-\tau) F(u,u_x)(\tau) \, d\tau - F(u,u_x)(t) \leq 
h^{-1}\int_t^{t+h} R(\tau) \, d \tau,
\]
where $R(\tau) := \| \cV(t+h-\tau) F(u,u_x)(\tau) - F(u, u_x)(t) \|_{s-2}$. Clearly, $R$ is continuous in $\tau \in (t, t + h)$ and, therefore, there exists some value $\eta \in (t, t + h)$ such that
\[
R(\eta) = h^{-1}\int_t^{t+h} R(\tau) \, d \tau.
\]
By continuity of the semigroup,
\[
\lim_{h \to 0} R(\eta) = \lim_{h \to 0} \| \cV(t+h-\eta) F(u,u_x)(\eta) - F(u, u_x)(t) \|_{s-2} = 0,
\]
because $\eta \to t$ as $h \to0$. Hence,
\[
0 \leq \lim_{h \to 0} \Big\| h^{-1} \int_t^{t+h} \cV(t+h-\tau) F(u,u_x)(\tau) \, d \tau - F(u, u_x)(t) \Big\|_{s-2} \leq \lim_{h \to 0} R(\eta) = 0.
\]

Now, from the smoothing estimate \eqref{smoothest} with $\delta \in (\tfrac{3}{2}, 2)$, from \eqref{unifsgbd} and \eqref{E2}, and from elementary properties of semigroups and their generators (the infinitesimal generator $\cT = \partial_x^3 - \partial^2_x$ commutes with the semigroup $\cV(\cdot)$), we obtain the following estimate for $0 < |h| \ll 1$ sufficiently small and for every $0 < \tau < t$:
\[
\begin{aligned}
\big\| h^{-1} (\cV(t+h &-\tau) - \cV(t-\tau)) F(u, u_x)(\tau) + \partial^3_x \big( \cV(t-\tau) F(u, u_x)(\tau)\big)- \partial^2_x \big( \cV(t-\tau) F(u, u_x)(\tau)\big) \big\|_{s-2} =\\
&\leq \| \cV(t-\tau)( h^{-1} (\cV(h) - \Id) + \partial_x^3 - \partial^2_x )F(u, u_x)(\tau) \|_{s-2}\\
&\leq K_\delta \Big[ 1 + \Big( \frac{1}{2(t - \tau)} \Big)^{\delta} \Big]^{1/2} \big\| (h^{-1} (\cV(h) - \Id) +\partial_x^3 - \partial^2_x ) F(u, u_x)(\tau) ) \big\|_{s-2-\delta}\\
&\leq \overline{C} K_\delta \Big[ 1 + \Big( \frac{1}{2(t - \tau)} \Big)^{\delta} \Big]^{1/2} \| F(u, u_x)(\tau) ) \|_{s-2-\delta}\\
&\leq \overline{C} K_\delta \Big[ 1 + \Big( \frac{1}{2(t - \tau)} \Big)^{\delta} \Big]^{1/2} \| F(u, u_x)(\tau) ) \|_{s-\delta}\\
&\leq \overline{C} K_\delta \Big[ 1 + \Big( \frac{1}{2(t - \tau)} \Big)^{\delta} \Big]^{1/2} L_s(\| u(\tau) \|_s,0) \| u(\tau)\|_s\\
&\leq \overline{C} K_\delta \Big[ 1 + \Big( \frac{1}{2(t - \tau)} \Big)^{\delta} \Big]^{1/2} L_s( \sup_{\tau \in(0,t)} \{ \| u(\tau) \|_s\},0) \sup_{\tau \in(0,t)} \{\| u(\tau)\|_s\}.
\end{aligned}
\]
Arguing as before, the right hand side of last inequality is integrable in $\tau \in (0,t)$ because $\delta \in (\tfrac{3}{2}, 2)$. Moreover, from Corollary \ref{cordervdsg} (see equation \eqref{limdervdsg} with $t = 0$) we have
\[
\lim_{h \to 0} \big\| (h^{-1}(\cV(h) - \Id) + \partial_x^3 - \partial^2_x )F(u, u_x)(\tau) \big\|_{s-2} = 0,
\]
uniformly with respect to $\tau \in (0,t)$. Therefore, from the Dominated Convergence Theorem we conclude that
\[
\begin{aligned}
0 \leq \lim_{h \to 0} &\left\| h^{-1} \int_0^t   (\cV(t+h -\tau) - \cV(t-\tau)) F(u, u_x)(\tau) \, d \tau  + \partial_x^3 \int_0^t  \cV(t-\tau) F(u, u_x)(\tau) \, d \tau + \right. \\
& \left. \qquad - \partial_x^2 \int_0^t  \cV(t-\tau) F(u, u_x)(\tau) \, d \tau  \right\|_{s-2}\\
&\leq \int_0^t \lim_{h \to 0} \left\| h^{-1} (\cV(t+h -\tau) - \cV(t-\tau)) F(u, u_x)(\tau) + \partial^3_x \big( \cV(t-\tau) F(u, u_x)(\tau)\big) + \right.\\
&\left. \qquad \qquad - \partial^2_x \big( \cV(t-\tau) F(u, u_x)(\tau)\big) \right\|_{s-2} \, d \tau \\
&\leq K_\delta \int_0^t \Big[ 1 + \Big( \frac{1}{2(t - \tau)} \Big)^{\delta} \Big]^{1/2}  \lim_{h \to 0}  \big\| (h^{-1} (\cV(h) - \Id) +\partial_x^3 - \partial^2_x ) F(u, u_x)(\tau) ) \big\|_{s-2} \, d \tau\\
&= 0.
\end{aligned}
\]
Combining all the limits above when $h \to 0$ we obtain \eqref{limC1}. The lemma is proved.
\end{proof}

\subsection{Regularity of the data-solution map}

In the proof of existence of the strong solution to the Cauchy problem \eqref{CpKdVBF} (see Lemmata \ref{lemFlip} through \ref{lemstrongsol}) we have assumed that $f \in C^2(\R)$ and $g \in C^1(\R)$. This implies that the data-solution map is continuous. In order to prove that the former is at least of class $C^2$ we need to impose further regularity on the nonlinear functions $f$ and $g$. Let us denote 
\[
B = B_\vep(\phi) := \{ u \in \Hsper \, : \, \| u - \phi \|_s < \vep \},
\]
for some $\vep > 0$ and define the mapping
\begin{equation}
\label{defGamma}
\left\{
\begin{aligned}
\Gamma &: B \times C([0,T];\Hsper) \to C([0,T];\Hsper),\\
\Gamma (\psi, w)(t) &:= w(t) - \cV(t) \psi - \int_0^t \cV(t-\tau) F(w,w_x)(\tau) \, d\tau.
\end{aligned}
\right.
\end{equation}

Now, for any $\phi \in \Hsper$, $s > 5/2$, let us denote by $u_\phi \in C([0,T]; \Hsper)$ the solution to
\begin{equation}
\label{uphi}
u_\phi(t) = \cV(t) \phi + \int_0^t \cV(t-\tau) F(u_\phi, \partial_x u_\phi) (\tau) \, d \tau,
\end{equation}
which is the unique solution to the Cauchy problem \eqref{CpKdVBF} with initial condition $u_\phi(0) = \phi$. This clearly implies that
\[
\Gamma(\phi, u_\phi) (t) = 0, \quad t \in [0,T],
\]
for all $\phi \in \Hsper$. 

\begin{lemma}
\label{lemFrechetdiff}
Let $f \in C^4(\R)$, $g \in C^3(\R)$ and $s > 5/2$. Then the map $\Gamma : \Hsper \times C([0,T];\Hsper) \to C([0,T];\Hsper)$ defined in \eqref{defGamma} is twice Fr\'echet differentiable in an open neighborhood $B_\vep(\phi) \times B_\eta(u_\phi)$ of $(\phi, u_\phi)$ for some $\vep > 0$ and $\eta >0$.
\end{lemma}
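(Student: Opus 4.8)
The plan is to split $\Gamma$ into its (trivially smooth) affine part and a single nonlinear Duhamel term, and to reduce the latter to the differentiability of a superposition operator composed with a bounded linear map. Writing $\Gamma(\psi,w)(t) = w(t) - \cV(t)\psi - \cN(w)(t)$ with
\[
\cN(w)(t) := \int_0^t \cV(t-\tau)\,F(w,w_x)(\tau)\,d\tau,
\]
the map $(\psi,w)\mapsto w$ is a bounded linear projection and $(\psi,w)\mapsto -\cV(\cdot)\psi$ is bounded linear (by Theorem \ref{localexistlinear}); both are of class $C^\infty$ with vanishing second derivative, so that $D^2\Gamma$ comes entirely from $\cN$ and $D_\psi\Gamma(\psi,w)=-\cV(\cdot)$ is constant. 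Fixing $\delta\in(\tfrac32,2)$ as in Lemma \ref{lemFlip}, I would factor $\cN = \cD\circ\mathcal{F}$, where $\mathcal{F}(w)(t):=F(w(t),w_x(t))$ maps $C([0,T];\Hsper)$ into $C([0,T];\Hper^{s-\delta})$ and the Duhamel operator $\cD(G)(t):=\int_0^t\cV(t-\tau)G(\tau)\,d\tau$ maps $C([0,T];\Hper^{s-\delta})$ into $C([0,T];\Hsper)$. The operator $\cD$ is linear, and its boundedness is exactly the estimate already performed in Lemma \ref{lemmildsol} using the smoothing bound \eqref{smoothest} and the integrability of $(t-\tau)^{-\delta/2}$ (here $\delta<2$); being bounded linear, $\cD$ is $C^\infty$. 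By the chain rule it then suffices to prove that $\mathcal{F}$ is twice continuously Fréchet differentiable on $B_\vep(\phi)\times B_\eta(u_\phi)$, whereupon $D\Gamma(\psi,w)[\phi_1,w_1] = w_1 - \cV(\cdot)\phi_1 - \cD\big(D\mathcal{F}(w)[w_1]\big)$ and $D^2\Gamma(\psi,w)[(\phi_1,w_1),(\phi_2,w_2)] = -\cD\big(D^2\mathcal{F}(w)[w_1,w_2]\big)$.

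The decisive structural observation is that the hypotheses $f\in C^4(\R)$, $g\in C^3(\R)$ are equivalent to $F\in C^3(\R^2)$, since $F(u,p)=g(u)-f'(u)p$; this is precisely the amount of smoothness needed to differentiate the superposition operator twice, losing one order as is usual in Sobolev composition. Differentiating $F$ in $(u,p)$ and substituting $(u,p)=(w,w_x)$ with increments $(v,v_x)$, the candidate derivatives of $\mathcal{F}$, pointwise in $t$, are
\[
D\mathcal{F}(w)[v] = \big(g'(w)-f''(w)\,w_x\big)v - f'(w)\,v_x,
\]
\[
D^2\mathcal{F}(w)[v_1,v_2] = \big(g''(w)-f'''(w)\,w_x\big)v_1v_2 - f''(w)\big(v_1\,\partial_x v_2 + v_2\,\partial_x v_1\big).
\]
On the ball $B_\eta(u_\phi)$ the values $w(t)$ range in a fixed compact interval, so that, exactly as in Lemma \ref{lemFlip}, the Banach algebra property of $\Hper^{s-\delta}$ (valid since $s-\delta>\tfrac12$), the embedding $\Hsper\hookrightarrow L^\infty_{\mathrm{per}}$, and the bound $\|v_x\|_{s-\delta}\le\|v\|_{s+1-\delta}\le\|v\|_s$ (using $\delta>1$) show that these expressions define bounded linear, respectively bilinear, maps into $\Hper^{s-\delta}$, uniformly in $t\in[0,T]$.

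It remains to verify that these candidates are the genuine Fréchet derivatives, i.e. that the Taylor remainders vanish to the required order in the $C([0,T];\Hper^{s-\delta})$ norm. For the first derivative I would expand, via Taylor's theorem with integral remainder applied to $g$ and $f'$ along the segment from $(w,w_x)$ to $(w+v,(w+v)_x)$,
\[
F(w+v,(w+v)_x) - F(w,w_x) - D\mathcal{F}(w)[v] = \int_0^1 (1-\theta)\big(F_{uu}\,v^2 + 2F_{up}\,v\,v_x + F_{pp}\,v_x^2\big)\big(w+\theta v,(w+\theta v)_x\big)\,d\theta,
\]
and bound the right-hand side by $C\|v\|_{s,T}^2 = o(\|v\|_{s,T})$ using the algebra estimates, after noting $F_{pp}=0$ and that $F_{uu}=g''-f'''p$, $F_{up}=-f''$ are bounded along the segment because $g'',f'',f'''$ are continuous on the compact range. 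An analogous third-order expansion, whose remainder involves exactly $F_{uuu}=g'''-f^{(4)}p$ and $F_{uup}=-f'''$, yields the $o(\cdot)$ bound for the second-order remainder and is where the top-order hypotheses $g\in C^3$, $f\in C^4$ are consumed. Continuity of $w\mapsto D\mathcal{F}(w)$ and $w\mapsto D^2\mathcal{F}(w)$ then follows from the continuity of the superposition operators $w\mapsto g^{(j)}(w),\,f^{(j)}(w)$ into $\Hper^{s-\delta}$, established by dominated convergence and the algebra estimates as in Lemma \ref{lemFlip}; since $\mathcal{F}$ acts pointwise in $t$ with bounds uniform in $t$, the spatial differentiability transfers to the space--time setting in the supremum norm.

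I expect the main obstacle to be neither the formal computation of the derivatives nor the chain rule, but the bookkeeping of the derivative loss while keeping every estimate closed. The delicate feature is that each increment enters through $v_x$, one $x$-derivative rougher than $v$, while the coefficients themselves carry a factor $w_x$; one must therefore route every product through the single intermediate space $\Hper^{s-\delta}$ with $\delta\in(\tfrac32,2)$ chosen so that simultaneously $\Hper^{s-\delta}$ is a Banach algebra, the extra derivative is absorbed by $\|\cdot\|_{s+1-\delta}\le\|\cdot\|_s$, and $\cD$ retains the integrable singularity $(t-\tau)^{-\delta/2}$. Checking that this one choice of $\delta$ reconciles all three constraints, and that the third-order Taylor remainder indeed closes under $g\in C^3$, $f\in C^4$, is the substantive point; once granted, twice continuous Fréchet differentiability of $\Gamma$ follows from the chain and product rules together with the smoothness of the linear pieces $\cD$, $\cV(\cdot)$ and the identity.
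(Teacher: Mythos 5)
Your proposal is correct in overall structure and is in fact far more explicit than the paper's own proof, which disposes of the lemma in two sentences: it notes that $F(u,p)=g(u)-f'(u)p\in C^3(\R^2)$, asserts that continuous G\^ateaux derivatives of $\Gamma$ up to second order ``are easy to verify'' on a neighborhood of $(\phi,u_\phi)$, and then invokes Zeidler \cite{ZeidI86} (\S 4.2, Proposition 4.8) to upgrade continuous G\^ateaux differentiability to Fr\'echet differentiability. Your factorization $\cN=\cD\circ\mathcal{F}$, the observation that the affine pieces contribute nothing to $D^2\Gamma$, the explicit formulas for $D\mathcal{F}$ and $D^2\mathcal{F}$, and the routing of every product through the single space $\Hper^{s-\delta}$ with $\delta\in(\tfrac32,2)$ are exactly the details the paper leaves to the reader, and they are consistent with the estimates already set up in Lemmas \ref{lemFlip} and \ref{lemmildsol}.

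The one step that does not close as literally written is the third-order Taylor expansion used for the second-order remainder. To bound $\int_0^1\tfrac{(1-\theta)^2}{2}\bigl(F_{uuu}\,v^3+3F_{uup}\,v^2v_x\bigr)\bigl(w+\theta v,(w+\theta v)_x\bigr)\,d\theta$ in $\Hper^{s-\delta}$ via the Banach algebra property you need $\|g'''(w+\theta v)\|_{s-\delta}$ and $\|f^{(4)}(w+\theta v)\|_{s-\delta}$ to be finite and uniformly bounded; an $L^\infty$ bound on these coefficients (which is all that continuity of $g'''$ and $f^{(4)}$ on the compact range gives) is not sufficient, since $\Hper^{s-\delta}$ is not an $L^\infty$-module. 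The composition estimates of the type \eqref{Lipest} deliver the needed Sobolev bounds only when the outer function is locally Lipschitz, so your argument as stated implicitly consumes $g\in C^{3,1}$ and $f\in C^{4,1}$ rather than $g\in C^3$, $f\in C^4$. The repair stays entirely inside your framework: do not expand to third order, but instead write $D\mathcal{F}(w+v)-D\mathcal{F}(w)-D^2\mathcal{F}(w)[v,\cdot]=\int_0^1\bigl(D^2\mathcal{F}(w+\theta v)-D^2\mathcal{F}(w)\bigr)[v,\cdot]\,d\theta$ and use the local Lipschitz continuity of $w\mapsto D^2\mathcal{F}(w)$, which only requires $g''$ and $f'''$ to be locally Lipschitz --- exactly what $g\in C^3$ and $f\in C^4$ provide. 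This is, in substance, the G\^ateaux-to-Fr\'echet route the paper cites from Zeidler, and with that substitution your proof is complete.
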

\begin{proof}
Because of the regularity of $F(u,p) = g(u) - f'(u) p \in C^3(\R^2)$ and the definition of the mapping $\Gamma$, it is easy to verify that there exist continuous G\^ateaux derivatives of up to second order in a sufficiently small open neighborhood of $(\phi, u_\phi)$. This implies, in turn, twice Fr\'echet differentiability of $\Gamma$ in a (perhaps smaller) neighborhood of $(\phi, u_\phi)$ (see, e.g.,  Zeidler \cite{ZeidI86}, \S 4.2, Proposition 4.8). This shows the result.
%
\end{proof}

\begin{lemma}
\label{leminvert}
Assume that $f \in C^4(\R)$, $g \in C^3(\R)$, $s > 5/2$ and $\phi\in \Hsper$. Consider the unique strong solution to the Cauchy problem \eqref{CpKdVBF}, $u_\phi \in C([0,T]; \Hsper) \cap C^1([0,T]; \Hsmdper)$, $T>0$, with initial condition $u_\phi(0) = \phi$, given by Lemma \ref{lemstrongsol}. Then, the operator
\begin{equation}
\label{derivwGamma}
\left\{
\begin{aligned}
\partial_w \Gamma (\phi, u_\phi) &: C([0,T];\Hsper) \to C([0,T];\Hsper),\\
\partial_w \Gamma (\phi, u_\phi) w(t) &= w(t) - \int_0^t \cV(t-\tau) \big( g'(u_\phi) w - (f'(u_\phi) w)_x  \big) \, d\tau,
\end{aligned}
\right.
\end{equation}
is one to one and onto. Moreover, the data-solution map,
\begin{equation}
\label{datasolmap}
\left\{
\begin{aligned}
\Upsilon &: \Hsper \to C([0,T];\Hsper),\\
\phi &\mapsto \Upsilon(\phi) = u_\phi,
\end{aligned}
\right.
\end{equation}
is of class $C^2$.
\end{lemma}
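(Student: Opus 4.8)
The plan is to prove Lemma \ref{leminvert} in two stages: first show that the bounded linear operator $\partial_w \Gamma(\phi, u_\phi)$ is a bijection on $C([0,T];\Hsper)$, and then invoke the Implicit Function Theorem in Banach spaces to conclude that $\Upsilon$ is $C^2$. For the invertibility, I would write $\partial_w \Gamma(\phi, u_\phi) = \Id - \cK$, where $\cK$ is the integral operator
\[
(\cK w)(t) := \int_0^t \cV(t-\tau) \big( g'(u_\phi(\tau)) w(\tau) - (f'(u_\phi(\tau)) w(\tau))_x \big) \, d\tau.
\]
The key observation is that solving $\partial_w \Gamma(\phi, u_\phi) w = h$ for a given $h \in C([0,T];\Hsper)$ is equivalent to solving the linear Volterra integral equation $w = h + \cK w$, which is the Duhamel (mild) formulation of the linear inhomogeneous Cauchy problem $w_t = \cT w + g'(u_\phi) w - (f'(u_\phi)w)_x + h_t$-type evolution driven by $h$. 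The point is that the linear nonautonomous term $w \mapsto g'(u_\phi) w - (f'(u_\phi) w)_x$ satisfies exactly the same kind of smoothing-compatible Lipschitz bound as $F$ does in Lemma \ref{lemFlip}, since $f' \in C^3$ and $g' \in C^2$ and $u_\phi(\tau)$ ranges over a bounded set in $\Hsper$.

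The concrete mechanism I would use for bijectivity is the contraction/iteration argument already developed for $\cA$. Just as in Lemma \ref{lemmildsol}, I would estimate $\cK$ using the smoothing estimate \eqref{smoothest} with $\delta \in (\tfrac{3}{2},2)$ together with the Banach-algebra and Lipschitz bounds \eqref{Lipest}, obtaining a bound of the form
\[
\| \cK w \|_{s,T} \leq C K_\delta \widetilde{L}_s(M_s) (T + T^{1-\delta/2}) \| w \|_{s,T},
\]
where $\widetilde{L}_s$ collects the Lipschitz constants of $g'$ and $f'$ on the compact range of $u_\phi$ and $M_s = \sup_{t} \| u_\phi(t) \|_s$. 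On the (possibly shortened) time interval where $T + T^{1-\delta/2}$ is small enough, this makes $\cK$ a strict contraction, so $\Id - \cK$ is invertible with a Neumann series and is in particular one-to-one and onto. Since the time $T$ from Lemma \ref{lemmildsol} already satisfies a smallness condition of precisely this form (see \eqref{boundC}), essentially the same $T$ works here, and for an arbitrary finite horizon one iterates over finitely many short subintervals. The main subtlety to watch is that the differentiated term $(f'(u_\phi)w)_x$ costs one derivative; this is absorbed exactly as in \eqref{auxi1}, because the smoothing estimate supplies the $(t-\tau)^{-\delta/2}$ factor that recovers the lost derivative while keeping the kernel integrable for $\delta < 2$.

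With invertibility of $\partial_w \Gamma(\phi, u_\phi)$ established, the second stage is routine. By Lemma \ref{lemFrechetdiff} the map $\Gamma$ is twice Fréchet differentiable near $(\phi, u_\phi)$, and we have the identity $\Gamma(\phi, u_\phi) = 0$. Since $\partial_w \Gamma(\phi, u_\phi)$ is a bounded linear bijection of the Banach space $C([0,T];\Hsper)$ onto itself, it is a toplinear isomorphism by the Open Mapping Theorem. The Implicit Function Theorem (cf. Zeidler \cite{ZeidI86}) then yields a $C^2$ map $\phi \mapsto u_\phi$ locally solving $\Gamma(\phi, u_\phi) = 0$, and by uniqueness of the mild solution from Lemma \ref{lemmildsol} this locally defined map coincides with the data-solution map $\Upsilon$. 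Hence $\Upsilon$ is of class $C^2$, as claimed. I expect the only genuine obstacle to be the derivative-counting bookkeeping in the contraction estimate for $\cK$, i.e., verifying that the choice $\delta \in (\tfrac{3}{2},2)$ simultaneously controls the $(f'(u_\phi)w)_x$ term in $\Hper^{s-\delta}$ and keeps the time integral finite; everything else reduces to the estimates already proved for the nonlinear problem.
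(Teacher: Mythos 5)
Your proposal is correct and follows essentially the same route as the paper: the authors likewise bound the integral part of $\partial_w \Gamma(\phi,u_\phi)$ using the smoothing estimate \eqref{smoothest} with $\delta\in(\tfrac{3}{2},2)$, the Banach-algebra property and the Lipschitz bounds \eqref{Lipest}, obtain $\|\partial_w\Gamma(\phi,u_\phi)-\Id\|<\tfrac12$ on the same $T$ singled out by \eqref{boundC}, and then conclude invertibility by Neumann series and $C^2$ regularity of $\Upsilon$ via the Implicit Function Theorem together with Lemma \ref{lemFrechetdiff}. The only cosmetic differences are that the paper also derives the formula \eqref{derivwGamma} as a G\^ateaux limit before estimating, and does not need your subinterval iteration since it works directly on the local existence time.
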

\begin{proof}
We need to compute
\[
\lim_{h \to 0} h^{-1} \Gamma(\phi, u_\phi + h w),
\]
for any $w \in C([0,T]; \Hsper)$. The resulting expression for the G\^ateaux derivative coincides with the Fr\'echet derivative, $\partial_w \Gamma(\phi, u_\phi)$, in an open neighborhood of $(\phi, u_\phi)$. From the expression for $F(u,u_x)$ we obtain
\[
F(u_\phi + hw, \partial_x u_\phi + hw_x) = F(u_\phi, \partial_x u_\phi) + h[ \big( g'(u_\phi) - f''(u_\phi) \partial_x u_\phi\big) w - f'(u_\phi) w_x \big)] + O(h^2).
\]
Therefore,
\[
h^{-1} \Gamma(\phi, u_\phi + hw)(t) = w(t) - \int_0^t \cV(t-\tau) \big( g'(u_\phi)w  - (f'(u_\phi)w)_x  \big) \, d\tau + O(h),
\]
in view of \eqref{uphi}. This yields the expression for the Fr\'echet derivative in \eqref{derivwGamma} when $h \to 0$.

Next, from the smoothing estimate \eqref{smoothest}, again with $\delta \in (\tfrac{3}{2},2)$, we arrive at
\[
\begin{aligned}
\| \partial_w \Gamma(\phi, u_\phi) w(t) - w(t) \|_s &\leq \int_0^t \| \cV(t-\tau) ( g'(u_\phi)w - (f'(u_\phi)w)_x ) \|_s \, d\tau\\
&\leq K_\delta \int_0^t \Big[ 1 + \Big( \frac{1}{2(t - \tau)} \Big)^{\delta} \Big]^{1/2} \| g'(u_\phi)w - (f'(u_\phi)w)_x  \|_{s-\delta} \, d\tau.
\end{aligned}
\]
We now use estimates \eqref{Lipest} and the fact that $\Hsper$ is a Banach algebra for every $s > \tfrac{1}{2}$ to estimate the integrand. Indeed, in a (perhaps smaller) neighborhood of $u_\phi$ such that $\| u_\phi \|_s \leq \beta + \| \phi \|_s$ (recall that $\cA$ satisfies $\cA(Z_{\beta,T}) \subset Z_{\beta,T}$), we obtain
\[
\begin{aligned}
\| g'(u_\phi)w - (f'(u_\phi)w)_x \|_{s-\delta} & \leq \| g'(u_\phi)w \|_{s-\delta} + \| (f'(u_\phi)w)_x \|_{s-\delta}\\
&\leq C_{s,\delta} \| g'(u_\phi) \|_{s-\delta} \| w \|_{s-\delta} + \| f'(u_\phi)w \|_{s+1-\delta}\\
&\leq C_{s,\delta} \big( L_g \| u_\phi \|_{s-\delta} + |g'(0)| \big) \| w \|_{s-\delta} + \| f'(u_\phi)w \|_{s}\\
&\leq C_{s,\delta} \big( L_g \| u_\phi \|_{s} + |g'(0)| + \| f'( u_\phi) \|_{s} \big) \| w \|_{s}\\
&\leq C_{s,\delta} \big( L_g \| u_\phi \|_{s} + |g'(0)| + L_f \| u_\phi \|_{s} + |f'(0)| \big) \| w \|_{s}\\
&\leq C L_s(\beta + \| \phi \|_s, 0) \sup_{\tau \in [0,T]} \| w(\tau) \|_s\\
&\leq C L_s(\beta + \| \phi \|_s, \beta + \| \phi \|_s) \sup_{\tau \in [0,T]} \| w(\tau) \|_s,
\end{aligned}
\]
for all $0 < \tau < t < T$. Upon integration we obtain,
\[
\begin{aligned}
\| \partial_w \Gamma(\phi, u_\phi) w(t) - w(t) \|_s &\leq C K_\delta L_s(\beta + \| \phi \|_s, \beta + \| \phi \|_s) (T + T^{1-\delta}) \sup_{\tau \in [0,T]} \| w(\tau) \|_s\\
&= C_\phi (T + T^{1-\delta}) \| w \|_{s,T},
\end{aligned}
\]
with the same constant $C_\phi = C K_\delta L_s(\beta + \| \phi \|_s, \beta + \| \phi \|_s)$ as in \eqref{boundC}. Hence, we conclude that
\[
\| (\partial_w \Gamma(\phi, u_\phi) - \Id) w \|_{s,T} < \tfrac{1}{2} \| w \|_{s,T},
\]
for all $w \in C([0,T]; \Hsper)$. This yields,
\[
\| \partial_w \Gamma(\phi, u_\phi) - \Id \| < \tfrac{1}{2} < 1,
\]
in the operator norm and, therefore, $\partial_w \Gamma(\phi, u_\phi)$ is invertible in $C[0,T]; \Hsper)$.

In view of Lemma \ref{lemFrechetdiff} we now apply the Implicit Function Theorem in Banach spaces (see Zeidler \cite{ZeidI86}, \S 4.7) to deduce the existence of an open neighborhood $\widetilde{B} \subset B$ of $\phi$ and a $C^2$-mapping
\[
\Upsilon : \widetilde{B} \to C([0,T];\Hsper),
\]
such that $\Gamma (w, \Upsilon(w)) = 0$ for all $w \in \widetilde{B}$. This mapping $\Upsilon$ is clearly the data-solution map because $\phi \mapsto \Upsilon(\phi) = u_\phi$. The lemma is proved.
\end{proof}

\subsection{Proof of Theorem \ref{teolocale}}
If we suppose that $f \in C^2(\R)$ and $g \in C^1(\R)$ then from lemmata \ref{lemmildsol} and \ref{lemstrongsol} we obtain the existence and uniqueness of a strong solution $u \in C([0,T]; \Hsper) \cap C^1([0,T]; \Hsmdper)$ to the Cauchy problem \eqref{CpKdVBF} with initial condition $u(0) = \phi \in \Hsper$, and the data-solution map, $\phi \mapsto u$, is continuous. If we further assume that $f \in C^4(\R)$ and $g \in C^3(\R)$ then Lemmata \ref{lemFrechetdiff} and \ref{leminvert} imply that the data-solution map is of class $C^2$. The theorem is proved. \qed

\section{Sufficient conditions for orbital instability}
\label{seccriterion}

This Section is devoted to prove Theorem \ref{mainthem}, which is a criterion for the orbital instability of periodic waves for a class of generalized KdV-Burgers equations with forcing of the form \eqref{genKdVBF}. We start by recalling an abstract result.

\subsection{The fundamental result by Henry, Perez and Wreszinski}

In 1982, Henry, Perez and Wreszinski \cite{HPW82} proved a general and abstract result that has provided the link to obtain nonlinear (orbital) instability from spectral instability of periodic waves in many situations; see, for example, the cases of viscous balance laws (\'Alvarez \emph{et al.} \cite{AAP24}), the KdV equation (Lopes \cite{LopO02}), the critical KdV and NLS models (Angulo and Natali \cite{AngNat09}) and even KdV systems (Angulo \emph{et al.} \cite{AnLN08}), just to mention a few. This theorem essentially determines the instability of a manifold of equilibria under iterations of a nonlinear map with unstable linearized spectrum. 

\begin{theorem}[Henry \emph{et al.} \cite{HPW82}]
\label{teohenry}
Let $Y$ be a Banach space and $\Omega \subset Y$ an open subset such that $0 \in \Omega$. Assume that there exists a map $\cM : \Omega \to Y$ such that $\cM(0) = 0$ and, for some $p > 1$ and some continuous linear operator $\cL$ with spectral radius $r(\cL) > 1$, there holds
\[
\| \cM(y) - \cL y \|_Y = O(\| y \|_Y^p) 
\]
as $y \to 0$. Then $0$ is unstable as a fixed point of $\cM$. More precisely, there exists $\vep_0 > 0$ such that for all $B_\eta(0) \subset Y$ and arbitrarily large $N_0 \in \N$ there is $n \geq N_0$ and $y \in B_\eta(0)$ such that $\| \cM^n(y) \|_Y \geq \vep_0$.
\end{theorem}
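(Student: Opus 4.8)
The plan is to prove that $0$ is unstable as a fixed point of $\cM$ by exploiting the fact that the linearization $\cL$ has a mode that grows geometrically under iteration, and then showing that the nonlinear correction of order $\|y\|_Y^p$ with $p>1$ cannot suppress this growth while the orbit remains small. Since $r(\cL) > 1$, I would first fix a real number $\rho$ with $1 < \rho < r(\cL)$. By the spectral radius formula $r(\cL) = \lim_{n\to\infty} \|\cL^n\|^{1/n}$, there exist vectors whose iterates grow at least like $\rho^n$; more precisely, I would extract from the spectral information a unit vector (or a one-dimensional direction) along which $\|\cL^n y_0\|_Y \gtrsim \rho^n \|y_0\|_Y$ for infinitely many $n$, or pass to the complexification and use a spectral projection onto the part of the spectrum lying outside the disk of radius $\rho$ to obtain a genuinely expanding invariant subspace.

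The heart of the argument is a quantitative comparison between the nonlinear iterates $\cM^n(y)$ and the linear iterates $\cL^n y$. I would set up the telescoping identity
\[
\cM^n(y) - \cL^n y = \sum_{j=0}^{n-1} \cL^{n-1-j}\big( \cM(\cM^j(y)) - \cL\, \cM^j(y) \big),
\]
which holds by inserting intermediate terms and using linearity of $\cL$. The hypothesis $\|\cM(z) - \cL z\|_Y = O(\|z\|_Y^p)$ then lets me bound each summand by a constant times $\|\cL^{n-1-j}\| \cdot \|\cM^j(y)\|_Y^p$. The strategy is to run an inductive bootstrap: as long as the orbit stays in a ball of radius comparable to the threshold $\vep_0$, the points $\cM^j(y)$ track $\cL^j y$ closely enough that the accumulated error $\sum_j \|\cL^{n-1-j}\|\,\|\cM^j(y)\|_Y^p$ remains a lower-order correction to the dominant linear growth $\rho^n \|y\|_Y$. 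The gain of the exponent $p > 1$ in the error term is precisely what makes this summable against the linear growth, so the nonlinear orbit inherits the expansion $\|\cM^n(y)\|_Y \gtrsim \rho^n \|y\|_Y$ until it exits the small ball.

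To conclude, I would argue by contradiction in the spirit of the standard instability-from-unstable-spectrum lemma: suppose $0$ were stable, so that orbits starting in an arbitrarily small ball remain within a fixed small ball for all time. Starting from an initial datum $y \in B_\eta(0)$ aligned with the expanding direction (with $\|y\|_Y$ small but nonzero), the bootstrap estimate forces $\|\cM^n(y)\|_Y$ to grow like $\rho^n \|y\|_Y$ as long as the orbit stays small; since $\rho > 1$, this quantity eventually exceeds any fixed threshold $\vep_0$, contradicting confinement. Choosing $\vep_0$ to be a fixed fraction of the radius of the ball on which the $O(\|y\|_Y^p)$ bound and the contraction-of-error estimate hold, I obtain that for every $\eta > 0$ and every $N_0 \in \N$ there is some $n \ge N_0$ and some $y \in B_\eta(0)$ with $\|\cM^n(y)\|_Y \ge \vep_0$, which is exactly the claimed instability.

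The main obstacle I anticipate is making the inductive bootstrap rigorous while controlling the operator norms $\|\cL^{n-1-j}\|$: the spectral radius only governs the asymptotic growth of $\|\cL^n\|^{1/n}$, so individual norms $\|\cL^m\|$ may exhibit transient behavior, and the sum $\sum_j \|\cL^{n-1-j}\|\,\rho^{jp}\|y\|_Y^p$ must be shown to stay below $\tfrac12 \rho^n \|y\|_Y$ throughout the window in which the orbit is small. Isolating a clean expanding direction (via a spectral projection onto the portion of $\sigma(\cL)$ outside the unit disk and renorming $Y$ so that $\cL$ genuinely expands on that subspace) is the cleanest way to tame these transients, and getting that renorming or projection argument to interact correctly with the nonlinear error estimate is the delicate point of the proof.
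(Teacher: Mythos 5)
First, a remark on the comparison itself: the paper does not actually prove Theorem \ref{teohenry}; its ``proof'' is a citation of Theorem 2 in Henry, Perez and Wreszinski \cite{HPW82} (and of Theorem 5.1.5 in \cite{He81}). Your sketch therefore has to be measured against that original argument. Its overall architecture --- the telescoping identity $\cM^n(y)-\cL^n y=\sum_{j=0}^{n-1}\cL^{n-1-j}\big(\cM(\cM^j(y))-\cL\,\cM^j(y)\big)$, the bootstrap keeping $\cM^j(y)$ close to $\cL^j y$ while the orbit remains small, and the contradiction with stability --- is exactly the skeleton of that proof.

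There are, however, two genuine gaps located precisely at the step you yourself flag as delicate. First, the route via a Riesz projection onto the part of $\sigma(\cL)$ outside a disk of radius $\rho\in(1,r(\cL))$ is not available in general: it requires a circle $|\lambda|=\rho$ disjoint from $\sigma(\cL)$, and nothing in the hypotheses prevents $\sigma(\cL)$ from being, say, a full annulus meeting every such circle. The original proof uses no invariant expanding subspace at all; it exploits $r(\cL)=\inf_n\|\cL^n\|^{1/n}$, hence $\|\cL^n\|\ge r(\cL)^n$ for \emph{every} $n$, and chooses the initial direction depending on $n$, namely a unit vector $x_n$ with $\|\cL^n x_n\|\ge\tfrac12 r(\cL)^n$. (A single fixed direction with growth only along a subsequence, as your uniform-boundedness variant would give, does not obviously mesh with the bootstrap, which needs the lower bound at the specific exit time.) Second, the transient behavior of $\|\cL^m\|$ is tamed by the equivalent norm $\|x\|_b:=\sup_{m\ge0}\|\cL^m x\|/b^m$, for which $\|\cL\|_b\le b$, and the constant $b$ must be taken in the window $r(\cL)<b<r(\cL)^p$. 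That last inequality, which exists only because $p>1$, is exactly what makes the error sum $\sum_{j<n} b^{\,n-1-j}\big(r(\cL)^j\delta\big)^p\lesssim \big(r(\cL)^n\delta\big)^p$ a lower-order correction to the linear growth $r(\cL)^n\delta$. Your sketch appeals to ``the gain of the exponent $p$'' but never isolates this condition; without it the sum can instead be dominated by $b^{\,n}\delta^p$ and the bootstrap does not close.
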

\begin{proof}
See Theorem 2 in Henry \emph{et al.} \cite{HPW82} (see also Theorem 5.1.5 in Henry \cite{He81}).
\qed \end{proof}

\begin{remark}
Theorem \ref{teohenry} guarantees the instability of the origin as a fixed point of $\cM$, inasmuch as it proves the existence of points moving away from the origin under successive applications of $\cM$. This result can be easily adapted to show the instability of a whole set of fixed points of $\cM$. Indeed, if $\Gamma_0$ is a $C^1$-curve of fixed points of  $\cM$ containing the origin then the whole curve $\Gamma_0$ is unstable, in other words, the points $\{\cM^n(y), n \geq 0\}$ not only move away from the origin but also from $\Gamma_0$ (see the Remark after Theorem 2 in Henry \emph{et al.}\cite{HPW82} for further details).
\end{remark}
%

Theorem \ref{teohenry} can be recast in a more suitable form for applications to nonlinear wave instability.

\begin{corollary}
\label{corhenry}
Let $\cS : \Omega \subset Y \to Y$ be a $C^2$ map defined on an open neighborhood of a fixed point $\varphi$ of $\cS$. If there is an element $\mu \in \sigma(\cS'(\varphi))$ with $|\mu| > 1$ then $\varphi$ is unstable as a fixed point of $\cS$. Moreover, if $ \Gamma$ is a $C^1$-curve of fixed points of  $\cS$ with $\varphi \in \Gamma$ then $\Gamma$ is unstable.
\end{corollary}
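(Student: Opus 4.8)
The plan is to deduce Corollary~\ref{corhenry} from Theorem~\ref{teohenry} by a translation and a Taylor expansion. First I would reduce to the case of a fixed point at the origin: since $\varphi$ is a fixed point of $\cS$, I set $Y$ as the ambient Banach space, let $\Omega_0 = \Omega - \varphi$ be the corresponding open neighborhood of $0$, and define the translated map $\cM : \Omega_0 \to Y$ by $\cM(y) := \cS(\varphi + y) - \varphi$. This map satisfies $\cM(0) = \cS(\varphi) - \varphi = 0$, so $0$ is a fixed point of $\cM$, and the (in)stability of $0$ under $\cM$ is exactly the (in)stability of $\varphi$ under $\cS$.

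Next I would identify the linearization. Setting $\cL := \cS'(\varphi) = \cM'(0)$, which is a bounded linear operator because $\cS$ is of class $C^2$, the hypothesis that there exists $\mu \in \sigma(\cS'(\varphi))$ with $|\mu| > 1$ translates immediately into the spectral radius condition $r(\cL) = \sup\{ |\lambda| : \lambda \in \sigma(\cL)\} \geq |\mu| > 1$ required by Theorem~\ref{teohenry}. The remaining structural hypothesis of that theorem is the estimate $\| \cM(y) - \cL y \|_Y = O(\| y \|_Y^p)$ as $y \to 0$ for some $p > 1$. Here I would invoke Taylor's theorem in Banach spaces: because $\cS$ (hence $\cM$) is $C^2$ on a neighborhood of $\varphi$ (resp.\ $0$), the second-order remainder form of Taylor's expansion gives
\[
\cM(y) = \cM(0) + \cM'(0) y + \tfrac{1}{2} \cM''(\xi)[y,y] = \cL y + \tfrac{1}{2} \cM''(\xi)[y,y],
\]
for some $\xi$ on the segment joining $0$ to $y$; since $\cM''$ is continuous and therefore bounded on a small enough closed ball around $0$, the remainder satisfies $\| \cM(y) - \cL y \|_Y \leq C \| y \|_Y^2$, which is precisely the desired estimate with $p = 2 > 1$.

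With both hypotheses verified, Theorem~\ref{teohenry} applies and yields that $0$ is unstable as a fixed point of $\cM$, which, undoing the translation, means that $\varphi$ is unstable as a fixed point of $\cS$. For the final assertion, if $\Gamma$ is a $C^1$-curve of fixed points of $\cS$ containing $\varphi$, then $\Gamma - \varphi$ is a $C^1$-curve of fixed points of $\cM$ containing $0$, and the Remark following Theorem~\ref{teohenry} (i.e.\ the adaptation of Henry \emph{et al.} to a whole curve of equilibria) guarantees that iterates $\cM^n(y)$ move away not only from $0$ but from the entire curve; translating back, $\Gamma$ is unstable.

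I do not expect a genuine obstacle here, as the statement is essentially a repackaging of the abstract theorem. The only point requiring mild care is the passage from the pointwise $C^2$ hypothesis to the uniform quadratic remainder bound: I must ensure the ball on which $\cM''$ is bounded is chosen inside $\Omega_0$ so that the Taylor remainder estimate holds uniformly, and I should note that Theorem~\ref{teohenry} only needs $\cL$ to be a bounded linear operator with $r(\cL) > 1$, which the Fr\'echet derivative of a $C^2$ map automatically provides. Beyond this bookkeeping, the proof is a direct verification of hypotheses.
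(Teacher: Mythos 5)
Your proposal is correct and is essentially the argument the paper delegates to the cited reference (the proof of Corollary 3 in \'Alvarez \emph{et al.}): translate $\varphi$ to the origin, take $\cL = \cM'(0) = \cS'(\varphi)$ so that $r(\cL) \geq |\mu| > 1$, obtain the quadratic remainder bound from the $C^2$ regularity, and apply Theorem \ref{teohenry} together with its Remark for the curve of fixed points. The only point to polish is that the Lagrange form $\tfrac{1}{2}\cM''(\xi)[y,y]$ of the remainder is not literally valid for Banach-space-valued maps; one should instead use the integral form $\int_0^1 (1-t)\,\cM''(ty)[y,y]\,dt$, which yields the same estimate $\| \cM(y) - \cL y \|_Y \leq C \| y \|_Y^2$ on a small ball where $\cM''$ is bounded.
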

\begin{proof}
See the proof of Corollary 3 in \'Alvarez \emph{et al.} \cite{AAP24}.
\end{proof}

\subsection{Global existence of solutions to the linear problem}

Before the application of the abstract orbital stability result stated in Theorem \ref{teohenry}, first we need to verify that the Cauchy problem for the linearized equation around the periodic wave is globally well-posed in a Sobolev periodic space with same fundamental period as the wave.

\begin{theorem}[global well-posedness of the linearized problem]
\label{lemglobalwp}
Suppose that $f \in C^4(\R)$, $g \in C^3(\R)$ and let $\varphi = \varphi (x-ct)$ be a periodic traveling wave solution with speed $c \in \R$ to the generalized KdV-Burgers equation \eqref{genKdVBF}, where the profile function $\varphi = \varphi(\cdot)$ is of class $C^3$ and has fundamental period $L > 0$. Then for every $\phi \in \Htper([0,L])$ and all $T > 0$ there exists a unique solution $v_\phi \in C([0,T];\Htper([0,L])) \cap C^1([0,T]; \Ldper([0,L]))$ to the linear Cauchy problem 
\begin{equation}
\label{Cplin}
\left\{
\begin{aligned}
v_t &= \cL^c v, \\
v(0) &= \phi,
\end{aligned}
\right.
\end{equation}
where $\cL^c$ is the linearized operator around the periodic traveling wave $\varphi$,
\[
\begin{aligned}
\cL^c &: \Ldper([0,L]) \to \Ldper([0,L]) , \\
D(\cL^c) &= \Htper([0,L]) ,\\
\cL^c &= - \partial_x^3 + \partial_x^2 + a_1(x) \partial_x + a_0(x) \Id,
\end{aligned}
\]
and with real, $L$-periodic and at least of class $C^1$ uniformly bounded coefficients, given by
\[
\begin{aligned}
a_1(x) &= c - f'(\varphi(x)),\\
a_0(x) &= g'(\varphi(x)) - f''(\varphi(x)) \varphi'(x), & \qquad x \in [0,L].
\end{aligned}
\]
\end{theorem}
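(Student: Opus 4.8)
The plan is to treat $\cL^c = \cT + \cB$, where $\cT = -\partial_x^3 + \partial_x^2$ is the viscous-dispersive generator of Theorem \ref{localexistlinear} and $\cB := a_1(x)\partial_x + a_0(x)\Id$ is a lower-order perturbation, and to solve the Cauchy problem \eqref{Cplin} by Duhamel's principle exactly as in the proof of Lemma \ref{lemmildsol}, exploiting the smoothing estimate of Corollary \ref{corsmoothest}. First I would record the regularity of the coefficients: since $f \in C^4$, $g \in C^3$ and $\varphi \in C^3$, composition yields $a_1 \in C^3 \subset \Htper$ and $a_0 \in C^2 \subset \Hdper$; because $\Hdper$ is a Banach algebra (Theorem 3.200 in \cite{IoIo01}), multiplication then shows that $\cB : \Htper \to \Hdper$ is a bounded operator with norm controlled by $\|a_1\|_3$ and $\|a_0\|_2$, so that $\|\cB v\|_2 \le C(\|a_1\|_2 + \|a_0\|_2)\|v\|_3$. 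Thus $\cB$ loses exactly one derivative.

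Next I would set up the integral (mild) formulation
\[
v(t) = \cV(t)\phi + \int_0^t \cV(t-\tau)\,\cB v(\tau)\, d\tau =: \Phi(v)(t),
\]
and seek a fixed point in $C([0,T];\Htper)$. The decisive balance is the choice $\delta \in (1,2)$ in \eqref{smoothest}: for such $\delta$ one has $3-\delta \le 2$, whence $\|\cB v(\tau)\|_{3-\delta} \le \|\cB v(\tau)\|_2 \le C\|v(\tau)\|_3$, while the time singularity $(t-\tau)^{-\delta/2}$ arising from $[1+(1/2(t-\tau))^\delta]^{1/2}$ stays integrable. This produces
\[
\Big\| \int_0^t \cV(t-\tau)\,\cB v(\tau)\,d\tau \Big\|_3 \le C K_\delta \|\cB\|\,(T + T^{1-\delta/2}) \sup_{\tau\in[0,T]} \|v(\tau)\|_3,
\]
together with the analogous bound for the difference $\Phi(v) - \Phi(\tilde v)$. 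Choosing $T > 0$ so small that $C K_\delta \|\cB\|(T+T^{1-\delta/2}) < \tfrac12$ makes $\Phi$ a contraction and yields a unique local mild solution. Crucially, since the equation is \emph{linear}, this existence time depends only on the fixed coefficient bounds and not on $\|\phi\|_3$.

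I would then upgrade local to global existence by iteration: because the contraction time is data-independent and the coefficients are time-independent, one solves successively on $[0,T]$, $[T,2T]$, $\dots$, restarting each time from $v(kT)\in\Htper$, thereby reaching any prescribed horizon, while uniqueness on each subinterval patches to global uniqueness. Alternatively, a weakly singular Gronwall (Gronwall–Henry) inequality applied to $\|v(t)\|_3$ furnishes the a priori bound $\|v(t)\|_3 \le C(T)\|\phi\|_3$ on $[0,T]$, precluding blow-up. Finally, for the strong-solution regularity $v \in C^1([0,T];\Ldper)$, I would differentiate the Duhamel formula and repeat, almost verbatim, the argument of Lemma \ref{lemstrongsol}: Corollary \ref{cordervdsg} handles the difference quotient of the semigroup term, and the Dominated Convergence Theorem handles the Duhamel term, with the integrable majorant supplied by the same $\delta \in (1,2)$ smoothing bound and the uniform estimate \eqref{unifsgbd}.

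I expect the main obstacle to be the compatibility between the single derivative lost by $\cB$ and the derivative gained by $\cV$: the whole scheme closes only because the smoothing estimate grants a gain of $\delta$ derivatives for every $\delta < 2$ while retaining an integrable $(t-\tau)^{-\delta/2}$ singularity, which is precisely what accommodates a one-derivative loss in the admissible window $\delta \in (1,2)$. (Note that a direct appeal to relatively bounded perturbation theory is not available here, since the symbol $Q(k)$ of $\cT$ has imaginary part of order $k^3$ dominating the real part of order $k^2$, so the semigroup $\cV(t)$ is \emph{not} analytic.) The secondary technical point is the $C^1([0,T];\Ldper)$ regularity up to $t=0$, but this is a direct transcription of Lemma \ref{lemstrongsol}.
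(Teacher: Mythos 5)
Your proposal is correct, but it takes a genuinely different route from the paper. The paper does not decompose $\cL^c$ at all: it performs an $L^2$ energy estimate on the full variable-coefficient operator, integrating by parts to show $\Re\langle u,\cL^c u\rangle_0 \le C_0\|u\|_0^2$ with $C_0 = \sup_x|a_0 - \tfrac12 a_1'|$, deduces the resolvent bound $\|(\lambda-\cL^c)^{-1}\|\le (\Re\lambda - C_0)^{-1}$ for $\Re\lambda>C_0$, and invokes the Hille--Yosida theorem in the quasicontractive case to conclude that $\cL^c$ generates a $C_0$-semigroup on $\Ldper$; the regularity $C([0,T];\Htper)\cap C^1([0,T];\Ldper)$ for $\phi\in D(\cL^c)=\Htper$ then follows from the standard semigroup theory (Pazy). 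That argument is shorter and yields the propagator $e^{t\cL^c}$ in one stroke, though it quietly relies on surjectivity of $\lambda-\cL^c$ (via compactness of the embedding $\Htper\hookrightarrow\Ldper$) to pass from the a priori bound to membership in the resolvent set. Your perturbative scheme --- splitting off $\cT=-\partial_x^3+\partial_x^2$, exploiting the smoothing estimate \eqref{smoothest} with $\delta\in(1,2)$ to absorb the one-derivative loss of $\cB=a_1\partial_x+a_0$, contracting in $C([0,T];\Htper)$ with a data-independent $T$, and iterating --- is sound and sidesteps the generator verification entirely; it also has the virtue of producing exactly the mild-solution integral equation that the paper later uses (equation \eqref{exprV} in the proof of Lemma \ref{propS}) to identify $\cS'(\varphi)\psi$ with the linearized flow. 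Your observation that $\cV(t)$ is not analytic, so sectorial perturbation theory is unavailable, is accurate and justifies working at the level of the smoothing estimates rather than abstract relatively-bounded perturbation theorems. Two small points you should make explicit if you write this up: uniqueness in the class $C([0,T];\Htper)\cap C^1([0,T];\Ldper)$ requires checking that every strong solution satisfies the Duhamel formula (standard, by differentiating $\tau\mapsto\cV(t-\tau)v(\tau)$, legitimate since $v(\tau)\in\Htper\subset D(\cT)$), and the restart of the iteration at $v(kT)$ is licensed precisely because the fixed point lies in $C([0,T];\Htper)$.
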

\begin{proof}
The operator $\cL^c$ is a closed, densely defined operator in $\Ldper([0,L])$. Since its domain is compactly embedded in $\Ldper([0,L])$ then its spectrum consists entirely of isolated eigenvalues, $\sigma(\cL^c)_{|\Ldper} = \ptsp(\cL^c)_{|\Ldper}$ (see, e.g., Kapitula and Promislow \cite{KaPro13}). Let us suppose that for $\lambda \in \C$, $u \in D(\cL^c) = \Htper([0,L])$ and $h \in \Ldper([0,L])$, there holds
\[
(\lambda - \cL^c) u = h.
\]
Since $s = 3 \in \N$, then $\Htper([0,L])$ can be identified as the space of complex $L$-periodic functions in $L^2_\mathrm{\tiny{loc}}(\R)$ with $\partial_x^j u(0) = \partial_x^j u(L)$, $j = 0,1,2$ with $\| u \|_3 < \infty$. Hence, take the complex $L^2$-product of $u$ with last equation to obtain,
\begin{equation}
\label{laeq1}
\lambda \| u \|_0^2 = \langle u, \cL^c u \rangle_0 + \langle u, h \rangle_0.
\end{equation}
Integrating by parts and because of the boundary conditions we have
\[
-\langle u, u_{xxx}\rangle_0 = \langle u_x, u_{xx} \rangle_0 = - \langle u_{xx}, u_x \rangle_0,
\]
yielding
\[
- \Re \langle u, u_{xxx}\rangle_0 = \tfrac{1}{2} \big( \langle u_x, u_{xx} \rangle_0 + \langle u_{xx}, u_{x} \rangle_0\big) = 0.
\]
Likewise,
\[
\langle u,u_{xx}\rangle_0 = - \langle u_x, u_x \rangle = - \| u_x \|_0^2.
\]
Moreover,
\[
\begin{aligned}
\Re \langle u, a_1(x) u_x\rangle_0 = \int_0^L a_1(x) \Re (u \, \overline{u_x} )\, dx &= \tfrac{1}{2} \int_0^L a_1(x) \partial_x |u|^2 \, dx\\
&= -\tfrac{1}{2} \int_0^L a_1'(x) |u|^2 \, dx + \left.\Big( \tfrac{1}{2} a_1(x) |u|^2 \Big)\right|_0^L\\
&= - \tfrac{1}{2} \langle u, a_1'(x) u \rangle_0,
\end{aligned}
\]
and
\[
\Re \langle u, a_0(x) u \rangle_0 = \int_0^L a_0(x) |u|^2 = \langle u, a_0(x) u \rangle_0,
\]
because the coefficients are real, periodic, differentiable and uniformly bounded. Taking the real part of \eqref{laeq1} and substituting we obtain
\[
(\Re \lambda) \| u \|_0^2 = \Re \langle u, \cL^c u \rangle_0 + \Re \langle u, h \rangle_0 = - \| u_x \|_0^2 + \Re \langle u, (a_0(x) - \tfrac{1}{2}a_1'(x)) u \rangle_0 + \Re \langle u, h \rangle_0.
\]
From the hypotheses on the coefficients there exists a uniform constant $C_0 > 0$, depending only on $c$ and $\varphi$, such that
\[
\sup_{x \in [0,L]} | a_0(x) - \tfrac{1}{2}a_1'(x) | \leq C_0.
\]
Therefore,
\[
(\Re \lambda) \| u \|_0^2 \leq \Re \langle u, (a_0(x) - \tfrac{1}{2}a_1'(x)) u \rangle_0 + \Re \langle u, h \rangle_0 \leq C_0 \| u \|_0^2 + \| u \|_0 \| h \|_0.
\]
If we take $\lambda \in \C$ such that $\Re \lambda > C_0$ then we obtain 
\[
\| u \|_0 \leq \frac{\| h \|_0}{\Re \lambda - C_0}.
\]
This shows that there exists $C_0 > 0$ such that, if $\Re \lambda > C_0$ then $\lambda \in \rho(\cL^c)$ and the following resolvent estimate holds,
\begin{equation}
\label{resolvest}
\| (\lambda - \cL^c)^{-1} \| \leq \frac{1}{\Re \lambda - C_0}.
\end{equation}

We now apply the classical Hille-Yosida theorem in the quasicontractive case (see, for instance, Engel and Nagel \cite{EN06}, Corollary 3.6, p. 68) to conclude that $\cL^c$ is the infinitesimal generator of a $C_0$-semigroup of quasicontractions, $\{ e^{t \cL^c} \}_{t \geq 0}$. Upon application of the standard theory of semigroups we conclude the existence of a unique global solution 
\[
v_\phi := e^{t \cL^c} \phi \in C([0,T];\Htper([0,L])) \cap C^1([0,T]; \Ldper([0,L])),
\] 
for all $T > 0$ to the linear Cauchy problem \eqref{Cplin} (see, e.g., Theorem 1.3 in Pazy \cite{Pazy83}, p. 102).
\end{proof}

\subsection{The mapping $\cS$}

Now we specify the particular mapping $\cS$ (in the context of Corollary \ref{corhenry}) suitable for our needs. First, it is to be observed that since $\varphi = \varphi(\cdot)$ is a $L$-periodic, $C^3$ profile function that defines the wave, then clearly $\varphi \in 
\Htper([0,L])$ as a function of the Galilean variable of translation, $x - ct$. Therefore, we can consider $\varphi = \varphi(x)$, $x \in [0,L]$, as the initial condition of the Cauchy problem. Thus, if $u_\varphi = \Upsilon(\varphi) \in C([0,T];\Htper([0,L]))$ denotes the unique solution to the Cauchy problem \eqref{CpKdVBF} with initial datum $\varphi \in \Htper([0,L])$, then for each $x \in [0,L]$ a.e. there holds $u_\varphi(t)(x) = \varphi(x-ct)$, or, equivalently, 
\begin{equation}
\label{solprofil}
u_\varphi(t) = \varphi(\cdot - ct) = \zeta_{-ct}(\varphi) \in \Htper([0,L]), 
\end{equation}
where $\zeta_\eta$ is the translation operator in $\Htper([0,L])$ for any $\eta \in \R$. This follows by direct differentiation and by the profile equation \eqref{profileq}. Therefore, for each $\phi \in \Htper([0,L]) $ let us define
\begin{equation}
\label{defS}
\begin{aligned}
\cS &: \Htper([0,L]) \to \Htper([0,L]),\\
\cS(\phi) &:= \zeta_{cT} (u_\phi(T))
\end{aligned}
\end{equation}
where $u_\phi = \Upsilon(\phi)$ denotes the unique solution to the Cauchy problem \eqref{CpKdVBF} with $u_\phi(0)=\phi$, $u_\phi \in C([0,{T}];\Htper([0,L]))$. Recall that $u_\phi$ is given by the variation of constants formula \eqref{uphi}.

\begin{lemma}
\label{propS} 
Let $\varphi$ be a periodic profile for equation \eqref{genKdVBF}. The mapping $\cS$ defined in \eqref{defS} satisfies:
\begin{itemize}
\item[(a)] $\cS(\varphi) = \varphi \in \Htper([0,L])$.
\item[(b)] $\cS$ is twice Fr\'echet differentiable in an open neighborhood of $\varphi$.
\item[(c)] For every $\psi \in \Htper([0,L])$ there holds
\begin{equation}
\label{devS}
\cS'(\varphi) \psi = v_\psi(T),
\end{equation}
where $v_\psi (t) \in \Htper([0,L])$ denotes the unique solution to the linear Cauchy problem \eqref{Cplin} with initial datum $v_\psi (0) = \psi$.
\end{itemize}
\end{lemma}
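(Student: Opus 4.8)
The plan is to establish the three claims in order, exploiting the identification of the wave profile as a stationary point of the translated flow. For part (a), I would start from the fact, recorded in \eqref{solprofil}, that the solution with initial datum $\varphi$ is the pure translation $u_\varphi(t) = \zeta_{-ct}(\varphi)$. Evaluating at $t = T$ gives $u_\varphi(T) = \zeta_{-cT}(\varphi)$, and applying the definition \eqref{defS} of $\cS$ yields $\cS(\varphi) = \zeta_{cT}(u_\varphi(T)) = \zeta_{cT}(\zeta_{-cT}(\varphi)) = \varphi$, since the translation operators satisfy the group law $\zeta_{cT} \circ \zeta_{-cT} = \Id$. The fact that $\varphi \in \Htper([0,L])$ follows from the hypothesis that $\varphi$ is a $C^3$, $L$-periodic profile, so this is immediate.

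For part (b), I would write $\cS = \zeta_{cT} \circ \mathrm{ev}_T \circ \Upsilon$, where $\Upsilon$ is the data-solution map \eqref{datasolmap} and $\mathrm{ev}_T : C([0,T];\Htper) \to \Htper$ is evaluation at time $t = T$. By Lemma \ref{leminvert}, applied with $s = 3 > 5/2$ and under the standing regularity assumptions $f \in C^4$, $g \in C^3$, the map $\Upsilon$ is of class $C^2$ on an open neighborhood of $\varphi$. The evaluation map $\mathrm{ev}_T$ is bounded and linear, hence smooth, and the translation operator $\zeta_{cT}$ is a bounded linear isometry on $\Htper([0,L])$, hence also smooth. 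Therefore $\cS$, being a composition of a $C^2$ map with two bounded linear maps, is twice Fr\'echet differentiable in a neighborhood of $\varphi$, and its derivative is computed by the chain rule as $\cS'(\varphi) = \zeta_{cT} \circ \mathrm{ev}_T \circ \Upsilon'(\varphi)$.

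For part (c), I would identify $\Upsilon'(\varphi)\psi$ as the solution of the linearized integral equation. Differentiating the fixed-point relation $\Gamma(\phi, \Upsilon(\phi)) = 0$ via the implicit function theorem gives $\Upsilon'(\varphi) = -\big(\partial_w \Gamma(\varphi, u_\varphi)\big)^{-1} \partial_\psi \Gamma(\varphi, u_\varphi)$; using the explicit form \eqref{derivwGamma} of $\partial_w\Gamma$ together with $\partial_\psi\Gamma(\varphi,u_\varphi)\psi = -\cV(t)\psi$, the function $W := \Upsilon'(\varphi)\psi$ satisfies the Duhamel equation
\[
W(t) = \cV(t)\psi + \int_0^t \cV(t-\tau)\big( g'(u_\varphi) W - (f'(u_\varphi) W)_x \big)(\tau)\, d\tau.
\]
The main subtlety is to recognize that this is precisely the mild form of a linear evolution equation whose generator, after accounting for the translation $u_\varphi(\tau) = \varphi(\cdot - c\tau)$, is the linearized operator $\cL^c$ around the stationary profile. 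This is where the composition with $\zeta_{cT}$ in the definition of $\cS$ does its work: it converts the variable-coefficient problem for $W$, whose coefficients depend on $\tau$ through the translate $\varphi(\cdot - c\tau)$, into the autonomous linear Cauchy problem \eqref{Cplin} governed by $\cL^c$ with the fixed coefficients $a_1, a_0$ from \eqref{defas}. Concretely, I expect that setting $v_\psi(t) := \zeta_{ct}(W(t))$ and differentiating in $t$ yields $\partial_t v_\psi = \cL^c v_\psi$ with $v_\psi(0) = \psi$, and then by the uniqueness guaranteed in Theorem \ref{lemglobalwp} one concludes $v_\psi(t)$ is the solution of \eqref{Cplin}. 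Finally, evaluating $\cS'(\varphi)\psi = \zeta_{cT}(W(T)) = v_\psi(T)$ gives \eqref{devS}. The principal obstacle is this change-of-variables step relating the nonautonomous Duhamel kernel to the autonomous semigroup $e^{t\cL^c}$; I would handle it by verifying directly that $\zeta_{ct}(\cV(t-\tau)(\cdot))$ intertwines the two formulations, using that $\zeta$ commutes with $\partial_x$ and transforms the frozen coefficients $g'(u_\varphi(\tau)), f'(u_\varphi(\tau))$ evaluated at $\varphi(\cdot - c\tau)$ into the stationary coefficients evaluated at $\varphi(\cdot)$.
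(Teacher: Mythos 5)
Your proposal is correct and follows essentially the same route as the paper: part (a) via the translation identity \eqref{solprofil}, part (b) by composing the $C^2$ data-solution map from Lemma \ref{leminvert} with the smooth translation operator, and part (c) by deriving the linearized Duhamel equation for $W=\Upsilon'(\varphi)\psi$ and then checking that $v_\psi(t)=\zeta_{ct}(W(t))$ solves the autonomous problem \eqref{Cplin}, concluding by uniqueness. The only cosmetic difference is that you obtain the equation for $W$ from the implicit-function-theorem formula $\Upsilon'(\varphi)=-(\partial_w\Gamma)^{-1}\partial_\psi\Gamma$, whereas the paper expands $u_{\varphi+\vep\psi}$ directly in $\vep$ inside the variation-of-constants formula; both yield the same integral equation.
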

\begin{proof}
(a) Observing that $\cS(\varphi) = \zeta_{cT}(u_{\varphi}(T)) = \zeta_{cT} (\zeta_{-cT}(\varphi)) = \varphi$ in view of \eqref{solprofil}, we immediately reckon that $\varphi \in \Htper([0,L])$ is a fixed point of $\cS$.

(b) From Lemma \ref{leminvert} we know that the data-solution map $\phi \mapsto \Upsilon(\phi) = u_\phi$ is of class $C^2$. Also, the translation operator is of class $C^2$ in $\Htper([0,L])$ (actually, it is of class $C^\infty$). Thus, the composition is of class $C^2$ and we conclude that $\cS$ is twice Fr\'echet differentiable in an open neighborhood, $\Omega = \{ \phi \in \Htper([0,L]) \, : \, \| \phi - \varphi \|_2 < \eta \}$, of $\varphi$. 

(c) In order to prove (c), we ought to compute the Fr\'echet derivative of $\cS$ at $\varphi$. To that end, we calculate the (G\^ateaux derivative) operator,
\[
\cS'(\varphi) \psi = \frac{d}{d\vep} \big( \cS(\varphi + \vep \psi) \big)_{|\vep = 0},
\]
for any arbitrary $\psi \in \Htper([0,L])$. Notice that, by definition, $\cS(\varphi + \vep \psi) = \zeta_{cT} \big( u_{\varphi + \vep \psi} (T) \big) = \zeta_{cT} \big( \Upsilon(\varphi + \vep \psi) (T) \big)$. In view that $\Upsilon$ is of class $C^2$ around $\varphi$ we can make the expansion
\begin{equation}
\label{exp1}
u_{\varphi + \vep \psi} = \Upsilon (\varphi + \vep \psi) = \Upsilon(\varphi) + \vep \Upsilon'(\varphi) \psi + O(\vep^2).
\end{equation}
Now, from expression \eqref{uphi} we have that
\[
\begin{aligned}
u_{\varphi + \vep \psi}(t) &= \cV(t) (\varphi + \vep \psi) + \int_0^t \cV(t-\tau) F(u_{\varphi + \vep \psi}, \partial_x u_{\varphi + \vep \psi})(\tau) \, d\tau \\ 
&= \cV(t) (\varphi + \vep \psi) + \int_0^t \cV(t-\tau) \big[ g(u_{\varphi + \vep \psi}) - f'(u_{\varphi + \vep \psi}) \partial_x u_{\varphi + \vep \psi} \big] \, d\tau.
\end{aligned}
\]
Substituting \eqref{exp1} and recalling $\Upsilon(\varphi) = u_\varphi$ we arrive at 
\[
\begin{aligned}
g(u_{\varphi + \vep \psi}) &= g(u_\varphi) + \vep g'(u_\varphi) \Upsilon'(\varphi) \psi + O(\vep^2),\\
f'(u_{\varphi + \vep \psi}) \partial_x \Upsilon (\varphi + \vep \psi) &= f'(u_\varphi) \partial_x u_\varphi + \vep \partial_x \big( f'(u_\varphi) \Upsilon'(\varphi) \psi \big) + O(\vep^2).
\end{aligned}
\]
Upon substitution into the previous  integral formula one obtains
\[
\begin{aligned}
u_{\varphi + \vep \psi}(t) = \cV(t) \varphi + \int_0^t \cV(t-\tau) \big[ g(u_\varphi) - f'(u_\varphi) \partial_x u_\varphi \big] \, d\tau + \vep V_{\varphi,\psi}(t) +  O(\vep^2),
\end{aligned}
\]
with
\[
V_{\varphi,\psi}(t) :=
 \cV(t) \psi + \int_0^t \cV(t-\tau) \big[ g'(u_\varphi) \Upsilon'(\varphi) \psi - \partial_x \big( f'(u_\varphi) \Upsilon'(\varphi) \psi \big) \big] \, d\tau.
\]
Differentiation with respect to $\vep$ then yields
\[
\frac{d}{d\vep} \big( u_{\varphi + \vep \psi}(t) \big)_{|\vep = 0} = \frac{d}{d\vep} \big( \Upsilon(\varphi)(t) + \vep (\Upsilon'(\varphi) \psi)(t) + O(\vep^2) \big)_{|\vep = 0} = (\Upsilon'(\varphi) \psi)(t),
\]
and therefore
\[
V_{\varphi,\psi}(t) = (\Upsilon'(\varphi) \psi)(t) \in \Htper([0,L]),
\]
for all $t \in [0, {T}]$. Consequently, we have shown that $V_{\varphi,\psi}$  is a solution to the integral equation
\begin{equation}
\label{exprV}
V_{\varphi,\psi}(t) = \cV(t) \psi + \int_0^t \cV(t-\tau) \big[ g'(u_\varphi) V_{\varphi,\psi}(\tau) - \partial_x \big( f'(u_\varphi) V_{\varphi,\psi}(\tau) \big) \big] \, d\tau,
\end{equation}
for all $t \in [0, {T}]$. From formula \eqref{exprV} we recognize that $V_{\varphi,\psi}(0) = \psi$ and that it is the solution to the linearized Cauchy problem \eqref{Cplin} with $c=0$. We claim that
\begin{equation}
\label{claimV}
 v_\psi(t):=\zeta_{ct} \big(V_{\varphi,\psi}(t) \big),\quad  t \in [0, {T}],
\end{equation} 
is  the unique solution to the linearized Cauchy problem \eqref{Cplin} with initial datum $\psi$. Indeed, first notice that $\zeta_0 \big(V_{\varphi,\psi}(0) \big) = V_{\varphi,\psi}(0) = \psi$. Now, for $x \in [0,L]$ let us denote
\[
V(t,x) := \zeta_{ct} \big(V_{\varphi,\psi}(t) \big)(x) = V_{\varphi,\psi}(t)(x+ct) = V_{\varphi,\psi}(t, x + ct).
\]
Therefore, from expression \eqref{exprV} and in view that the viscous-dispersive operator $\cT = - \partial_x^3 + \partial_x^2$ is the infinitesimal generator of the semigroup $\cV(t)$, we obtain
\[
\begin{aligned}
\partial_t V &= \partial_t V_{\varphi,\psi}(t, x +ct) + c \partial_x V_{\varphi,\psi}(t,x+ct) \\
&= - \partial_x^3 V_{\varphi,\psi}(t,x+ct) + \partial_x^2 V_{\varphi,\psi}(t,x+ct) + g'(u_\varphi(x+ct)) V_{\varphi,\psi}(t,x+ct) +\\
&\quad - \partial_x \big( f'(u_{\varphi}(x+ct)) V_{\varphi,\psi}(t,x+ct) ) + c \partial_x V_{\varphi,\psi}(t,x+ct) \\
&= - \partial_x^3 V + \partial_x^2 V + g'(\varphi(x)) V - \partial_x \big( f'(\varphi(x)) V \big) + c \partial_x V,
\end{aligned}
\]
because $u_\varphi(\cdot + ct) = \varphi(\cdot -ct + ct) = \varphi(\cdot)$. This shows that $\partial_t V = \cL_0^c V$, $V(0) = \psi$ and therefore it is a solution to \eqref{Cplin}. By uniqueness of the solution, we obtain \eqref{claimV} for all $t \in [0, {T}]$.

Finally, evaluating at $t = T$ we have
\[
\begin{aligned}
\cS'(\varphi) \psi &= \frac{d}{d\vep} \Big( \zeta_{cT} (\Upsilon(\varphi) (T)) + \vep \zeta_{cT} ( \Upsilon'(\varphi) \psi (T) ) + O(\vep^2) \Big)_{|\vep = 0} \\
&= \zeta_{cT} \big( \Upsilon'(\varphi) \psi (T) \big)= \zeta_{cT} \big(V_{\varphi,\psi}(T) \big)= v_\psi(T),
\end{aligned}
\]
for any $\psi \in \Htper([0,L])$. This shows (c) and the lemma is proved.
\end{proof}

We are now able to prove the instability criterion.

\subsection{Proof of Theorem \ref{mainthem}}

From the hypotheses of the theorem, we know that the linearized operator around the $L$-periodic wave, $\cL^c \, : \, \Ldper([0,L]) \to \Ldper([0,L])$, has an unstable eigenvalue $\lambda \in \C$ with $\Re \lambda > 0$ and associated eigenfunction $\Psi \in \Htper([0,L])$. Hence, we now regard $\Psi$ as the initial condition for the linear Cauchy problem \eqref{Cplin}. From Theorem \ref{lemglobalwp} we know that there exists a unique global solution $v_\Psi \in C([0,T];\Htper([0,L])) \cap C^1([0,T];\Ldper([0,L])$ with $v_\Psi(0)=\Psi$, for any $T > 0$, fixed but arbitrary.

If we define, however, $U(t) = e^{\lambda t} \Psi \in \Htper([0,L])$ for all $t \geq 0$ then, clearly, 
\[
U \in C([0,T];\Htper([0,L])) \cap C^1([0,T]; \Ldper([0,L])),
\]
and $U(0) = \Psi$. Moreover, upon differentiation we obtain
\[
\partial_t U = \lambda e^{\lambda t} \Psi = e^{\lambda t} \cL_0^c \Psi = \cL_0^c \big( e^{\lambda t} \Psi \big) = \cL_0^c U.
\]
Hence, $U$ is also a solution to the Cauchy problem \eqref{Cplin} with $U(0) = \Psi$. By uniqueness of the solution we obtain $U(t) = v_\Psi (t)$ in $\Htper([0,L])$ for all $t \geq 0$. Now, define $\mu := e^{\lambda T}$. This implies that
\[
\cS'(\varphi) \Psi = v_\Psi  (T) = U(T) = e^{\lambda T} \Psi = \mu \Psi.
\]
This shows that $\mu \in \sigma(\cS'(\varphi))$ with $|\mu| > 1$ because $\Re \lambda > 0$. Thus, the mapping defined in \eqref{defS} on an open neighborhood of $\varphi$ satisfies the hypotheses of Corollary \ref{corhenry}. 
Therefore, for $\Gamma=\mathcal O_\varphi$ being a $C^1$-curve of fixed points of $\cS$, we conclude that the periodic traveling wave $\varphi$ is orbitally unstable in the space $\Htper([0,L])$. This finishes the proof.
%
%
%
%
\qed

\section{Application: the Korteweg-de Vries-Burgers-Fisher equation}
\label{secapplication}

Let us consider the following KdVB model with a source of logistic type,
\begin{equation}
\label{KdVBFn}
u_t + \alpha uu_x + u_{xxx} = u_{xx} + ru(1-u),
\end{equation}
for some fixed constants $\alpha, r > 0$. Equation \eqref{KdVBFn} was recently introduced by Ko\c{c}ak \cite{Koc20}, who proved the existence of traveling waves (such as solitons, kink and anti-kink waves, among others). The reaction term,
\[
f(u) = ru(1-u),
\]
is also known as a logistic reaction function or a source of Fisher-KPP type \cite{Fis37,KPP37}. It models the dynamics of populations with limited resources which saturate into a stable equilibrium point associated to an intrinsic carrying capacity. Consequently, \eqref{KdVBFn} is also called the \emph{Korteweg-de Vries-Burgers-Fisher (KdVBF) equation}.

In a recent contribution, Folino \emph{et al.} \cite{FNP24} proved both the existence and the spectral instability of a family small-amplitude periodic waves for the KdVBF equation \eqref{KdVBFn}. The existence theorem can be stated as follows.

\begin{theorem}[existence of small amplitude periodic waves \cite{FNP24}]
\label{theexistKdVBF} 
For any fixed parameter values $r,\alpha>0$, there exist $\epsilon_{0}>0$
sufficiently small and a critical speed value $c_{0}=-r$ such that the
KdV-Burgers-Fisher equation \eqref{KdVBFn} has a family of smooth periodic
traveling wave solutions of the form $u(x,t)=\varphi^{\epsilon}(x-c(\epsilon
)t)$, indexed by $0<\epsilon<\epsilon_{0}$, with fundamental period
\begin{equation}
L_{\epsilon}=\frac{2\pi}{\sqrt{r}}+O(\epsilon),\text{ as }\epsilon
\rightarrow0^{+}, \label{47}%
\end{equation}
and with amplitude growing like
\begin{equation}
\varphi^{\epsilon},(\varphi^{\epsilon})^{\prime}=O\left(  \sqrt{\epsilon
}\right)  . \label{48}%
\end{equation}
The speed of propagation is given by a function $c(\epsilon)=-r+O(\epsilon),$
$\epsilon\in\left(  0,\epsilon_{0}\right)  ,$ such that $c(\epsilon
)\rightarrow c_{0}$ as $\epsilon\rightarrow0^{+}$. For each fixed $\epsilon
\in(0,\epsilon_{0})$ the periodic wavetrain $\varphi^{\epsilon}$ is unique up
to translations.
\end{theorem}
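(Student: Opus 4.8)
The plan is to reduce the existence question to a classical Andronov--Hopf bifurcation analysis for the traveling-wave ODE. First I would substitute the Burgers flux $f(u)=\tfrac{\alpha}{2}u^2$ and the logistic source $g(u)=ru(1-u)$ into the profile equation \eqref{profileq}, obtaining the third-order autonomous ODE
\[
\varphi''' = \varphi'' - \alpha \varphi \varphi' + c\varphi' + r\varphi(1-\varphi),
\]
which I recast as a first-order system in $(y_1,y_2,y_3)=(\varphi,\varphi',\varphi'')$. Its equilibria are $(0,0,0)$ and $(1,0,0)$, corresponding to the zeros of the logistic source. Small-amplitude periodic profiles must oscillate about one of these rest points, so the next step is to linearize and search for a purely imaginary spectral pair produced by varying $c$.

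Second, I would compute the characteristic polynomial of the linearization at the origin, namely $\mu^3 - \mu^2 - c\mu - r = 0$, and observe that precisely at the critical speed $c_0 = -r$ it factors as $(\mu-1)(\mu^2 + r)$. This yields a simple real eigenvalue $\mu = 1$ together with a conjugate pair $\mu = \pm i\sqrt{r}$ on the imaginary axis, with frequency $\omega_0 = \sqrt r$; the leading factor $2\pi/\sqrt r$ in \eqref{47} is exactly the associated period $2\pi/\omega_0$. Because the third eigenvalue is real, there is no resonance of the form $ik\omega_0$ and the imaginary pair is simple. I would also check that the equilibrium $(1,0,0)$ cannot undergo a Hopf bifurcation, since evaluating its characteristic polynomial at $\mu=i\omega$ forces the real-part condition $\omega^2+r=0$, which is impossible; hence the origin is the relevant rest point.

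Third, I would verify the transversality (nondegeneracy) hypothesis by implicit differentiation of the characteristic relation: writing $\mu=\mu(c)$ for the branch with $\mu(c_0)=i\sqrt r$, one gets $\mu'(c)=\mu/(3\mu^2-2\mu-c)$, and evaluation at $c=c_0$ gives $\Re\,\mu'(c_0)=-1/\bigl(2(r+1)\bigr)\neq 0$. With a simple purely imaginary pair crossing the axis at nonzero speed and no resonance, the Andronov--Hopf theorem furnishes a one-parameter family of small periodic orbits bifurcating from the origin, with period tending to $2\pi/\sqrt r$, establishing \eqref{47}. Parametrizing the family by an amplitude parameter $\epsilon$, the standard Hopf scaling in which the amplitude grows like $\sqrt{|c-c_0|}$ yields the order $\varphi^\epsilon,(\varphi^\epsilon)'=O(\sqrt\epsilon)$ of \eqref{48} together with the expansion $c(\epsilon)=-r+O(\epsilon)$. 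Uniqueness up to translation is inherited from the local uniqueness of the bifurcating Hopf branch combined with the translation invariance of the autonomous profile ODE (a phase shift in $\xi$).

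The main obstacle is not the eigenvalue bookkeeping, which is elementary algebra, but the determination of the \emph{direction} of bifurcation and the precise asymptotic orders. Establishing that the bifurcation is subcritical requires computing the first Lyapunov coefficient through a center-manifold reduction and normal-form (or Lyapunov--Schmidt) calculation at $c_0$; its sign fixes the subcritical character and, together with the transversality constant $-1/\bigl(2(r+1)\bigr)$, pins down the $O(\epsilon)$ corrections to the speed and period. Since the system carries only quadratic nonlinearities, generated by $\alpha\varphi\varphi'$ and $r\varphi^2$, this coefficient receives its contributions through the second-order center-manifold expansion, and carrying out that computation carefully on the three-dimensional reduced system is the technically delicate part of the argument.
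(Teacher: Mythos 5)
Your proposal follows essentially the same route as the paper's source for this result: the paper itself only cites Folino \emph{et al.} \cite{FNP24}, whose argument (as described in the remark immediately after the theorem) is precisely the local Hopf bifurcation analysis of the traveling-wave ODE at $c_{0}=-r$ that you outline, and your key computations check out, namely the factorization $\mu^{3}-\mu^{2}+r\mu-r=(\mu-1)(\mu^{2}+r)$ at $c=c_0$ and the transversality constant $\Re \mu'(c_{0})=-1/\bigl(2(r+1)\bigr)\neq 0$. The first Lyapunov coefficient you defer is really only needed for the \emph{subcritical} direction of bifurcation asserted in the paper's remark; the theorem's statement itself (existence of the branch, the period and speed expansions, the $O(\sqrt{\epsilon})$ amplitude under a parametrization by squared amplitude, and uniqueness up to translation) already follows from the simple imaginary pair, the non-resonance with the real eigenvalue $\mu=1$, and the transversality condition that you verify.
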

\begin{proof}
See the proof of Theorem 2.1, \S 2.1, in Folino \emph{et al.} \cite{FNP24} for details.
\end{proof}

\begin{remark}
The proof of this existence result is based on a local bifurcation analysis. It shown that for any fixed positive values of the physical parameters $\alpha > 0$ and $r > 0$, a family of small-amplitude and finite period waves emerges from a subcritical Hopf bifurcation around a critical value of the wave speed given by $c_0 = -r$.
\end{remark}

Formulae (\ref{47}) and (\ref{48}) imply that, for a
fixed small $\epsilon$, the fundamental period of the wave is of order $O(1)$
and the amplitude of the waves is of order $O(\sqrt{\epsilon})$, respectively.
Thus, one expects that when $\epsilon\rightarrow0^{+}$ the small amplitude
waves tend to the origin and the linearized operator formally becomes a
constant coefficient linearized operator around the zero solution. This
observation is the basis of the analysis by Folino \emph{et al.} \cite{FNP24}, who proved that
unstable point eigenvalues of this constant coefficient operator split into
neighboring curves of Floquet spectra of the underlying small amplitude waves. Indeed, the Bloch family of linearized operators around the wave (cf. Kapitula and Promislow \cite{KaPro13} and Folino \emph{et al.} \cite{FNP24}),
\begin{equation}
\label{Blochoper}
\left\{
\begin{aligned}
\mathcal{L}_{\theta}^{c(\epsilon)}  &  :=-(\partial_{x}+i\theta/L_{\epsilon
})^{3}+(\partial_{x}+i\theta/L_{\epsilon})^{2}-a_{1}^{\epsilon}(x)(\partial
_{x}+i\theta/L_{\epsilon})-a_{0}^{\epsilon}(x) \Id,\\
\mathcal{L}_{\theta}^{c(\epsilon)}  &  :\Ldper([0,L_{\epsilon
}])\rightarrow \Ldper([0,L_{\epsilon}]), 
\end{aligned}
\right.
\end{equation}
where
\[
\begin{aligned}
a_{1}^{\epsilon}(x)&:=-c(\epsilon)-\alpha\varphi^{\epsilon},\\
a_{0}^{\epsilon}(x)&:=r-2r\varphi^{\epsilon}-\alpha(\varphi^{\epsilon})_{x},
\end{aligned}
\]
for each $\theta\in(-\pi,\pi]$, with domain $D(\mathcal{L}_{\theta
}^{c(\epsilon)})=\Htper([0,L_{\epsilon}])$, can be transformed into a
family of operators, $\widetilde{\mathcal{L}}_{\theta}^{\epsilon}$ , defined
on the periodic space $\Ldper([0,\pi])$ for which the period no longer
depends on $\epsilon$.

For that purpose, the authors in Folino \emph{et al.} \cite{FNP24} make the change of variables,
$y:=\pi x/L_{\epsilon}$ and $w(y):=u(L_{\epsilon}y/\pi)$, and apply (\ref{47})
and $c(\epsilon)=c_{0}+O(\epsilon)$, in order to recast the spectral problem
for the operators in \eqref{Blochoper} as $\widetilde{\mathcal{L}}_{\theta}%
^{\epsilon}w=\lambda w$, where
\begin{align*}
\widetilde{\mathcal{L}}_{\theta}^{\epsilon}  &  :=\widetilde{\mathcal{L}%
}_{\theta}^{0}+\sqrt{\epsilon}\widetilde{\mathcal{L}}_{\theta}^{1}:\Ldper([0,\pi])\rightarrow \Ldper([0,\pi]),\\
\widetilde{\mathcal{L}}_{\theta}^{0}  &  :=-(i\theta+\pi\partial_{y}%
)^{3}+L_{0}(i\theta+\pi\partial_{y})^{2}-L_{0}^{2}c_{0}\left(  i\theta
+\pi\partial_{y}\right)  +rL_{0}^{3}\Id,\\
\widetilde{\mathcal{L}}_{\theta}^{1}  &  :=b_{2}\left(  i\theta+\pi
\partial_{y}\right)  ^{2}-b_{1}(y)(i\theta+\pi\partial_{y})-b_{0}(y)\Id,
\end{align*}
for each $\theta\in(0,\pi]$ and where the coefficients behave like
\begin{align*}
b_{0}(y)  &  :=\frac{1}{\sqrt{\epsilon}}\left(  a_{0}^{\epsilon}\left(
y\right)  +rL_{0}^{3}\right)  =O\left(  1\right)  ,\\
b_{1}(y)  &  :=\frac{1}{\sqrt{\epsilon}}\left(  a_{1}^{\epsilon}\left(
y\right)  -L_{0}^{2}c_{0}\right)  =O\left(  1\right)  ,\\
b_{2}  &  :=\frac{L_{\epsilon}-L_{0}}{\sqrt{\epsilon}}=O(1)
\end{align*}
as $\epsilon\rightarrow 0^+$. It can be shown that,
for every $\theta$, $\widetilde{\mathcal{L}}_{\theta}^{1}$ is $\widetilde
{\mathcal{L}}_{\theta}^{0}$ -bounded (see Lemma 4.3 in Folino \emph{et al.} \cite{FNP24}). Therefore,
upon application of standard perturbation theory for linear operators (cf.
Kato \cite{Kat80}), it is shown that both spectra, $\sigma (\widetilde
{\mathcal{L}}_{\theta}^{\epsilon})  $ and $\sigma(  \widetilde
{\mathcal{L}}_{\theta}^{0})  ,$ are located nearby in the complex plane
for $\epsilon>0$ small enough.

Transforming back into the original coordinates, the same conclusion holds for
any fixed, sufficiently small $\epsilon>0$ and the associated family of Bloch
operators \eqref{Blochoper} defined on $\Ldper\left(  [0,L_{\epsilon}]\right)
$. In particular, for $\theta=0$, the unperturbed operator
\begin{equation}
\begin{aligned}
\mathcal{L}_{0}^{c(0)}  &  :=-\partial_{x}^{3}+\partial_{x}^{2} - r\partial
_{x}+r\Id,\\
\mathcal{L}_{0}^{c(0)}  &  :\Ldper([0,L_{0}])\rightarrow \Ldper([0,L_{0}]),
\end{aligned}
\end{equation}
with domain $D(\mathcal{L}_{0}^{c(0)})=\Htper([0,L_{0}])$, has
a positive eigenvalue $\widetilde{\lambda}_{0}=r$ associated to the constant
eigenfunction $\Psi_{0}(y)=1\in \Htper([0,L_{0}])$. Hence, the operator
$\mathcal{L}_{0}^{c(\epsilon)}$ has discrete eigenvalues $\widetilde
{\lambda_{j}}\left(  \epsilon\right)  $ in a $\sqrt{\epsilon}$-neighborhood of
$\widetilde{\lambda}_{0}=r$ with multiplicities adding up to the multiplicity
of $\widetilde{\lambda}_{0}$ provided that $\epsilon$ is sufficiently small.
Moreover, since $\widetilde{\lambda}_{0}>0$ there holds $\operatorname{Re}%
\widetilde{\lambda_{j}}\left(  \epsilon\right)  >0$. Henceforth, we have the
following result.

\begin{lemma}
\label{Lemma 8} For each $0<\epsilon\ll1$ sufficiently small there holds
\begin{equation}
\ptsp (  \mathcal{L}_{0}^{c(\epsilon)})_{| \Ldper}  \cap\{\lambda\in\mathbb{C}:|\lambda-rL_{0}^{3}%
|<\zeta(\epsilon)\}\neq\varnothing, \label{50}%
\end{equation}
for some $\zeta(\epsilon)=O(\epsilon)>0.$
\end{lemma}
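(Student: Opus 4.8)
The plan is to prove \eqref{50} by regular perturbation theory applied to the rescaled operator $\widetilde{\mathcal{L}}_0^{\epsilon}=\widetilde{\mathcal{L}}_0^{0}+\sqrt{\epsilon}\,\widetilde{\mathcal{L}}_0^{1}$ on the $\epsilon$-independent space $\Ldper([0,\pi])$, obtained from $\mathcal{L}_0^{c(\epsilon)}$ by the change of variables $y=\pi x/L_\epsilon$ that recasts the spectral problem as $\widetilde{\mathcal{L}}_0^{\epsilon}w=\lambda w$, as described above. Since $\widetilde{\mathcal{L}}_0^{0}$ has compactly embedded domain its spectrum is discrete, and a direct Fourier computation on $\Ldper([0,\pi])$ shows that $rL_0^3$ is its rightmost eigenvalue, isolated and algebraically simple, with constant eigenfunction $\Psi_0\equiv 1$ and $\Re(rL_0^3)=rL_0^3>0$. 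First I would fix a small circle $\Gamma\subset\C$ centered at $rL_0^3$ that encloses no other point of $\sigma(\widetilde{\mathcal{L}}_0^{0})$.

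Because $\widetilde{\mathcal{L}}_0^{1}$ is $\widetilde{\mathcal{L}}_0^{0}$-bounded (Lemma 4.3 in Folino \emph{et al.} \cite{FNP24}), for $\epsilon$ small the resolvent $(\widetilde{\mathcal{L}}_0^{\epsilon}-z)^{-1}$ exists and is analytic in $\sqrt{\epsilon}$ uniformly for $z\in\Gamma$ (cf. Kato \cite{Kat80}). I would then introduce the Riesz spectral projection $P(\epsilon)=-\tfrac{1}{2\pi i}\oint_{\Gamma}(\widetilde{\mathcal{L}}_0^{\epsilon}-z)^{-1}\,dz$ and show $\|P(\epsilon)-P(0)\|\to 0$, so that $\mathrm{rank}\,P(\epsilon)=\mathrm{rank}\,P(0)=1$ for $\epsilon$ small. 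This produces a single eigenvalue $\widetilde{\lambda}(\epsilon)$ of $\widetilde{\mathcal{L}}_0^{\epsilon}$ inside $\Gamma$, with $\widetilde{\lambda}(\epsilon)\to rL_0^3$ and hence $\Re\widetilde{\lambda}(\epsilon)>0$ as $\epsilon\to 0^+$.

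The crux is to sharpen the generic $O(\sqrt{\epsilon})$ localization to the $O(\epsilon)$ bound in \eqref{50}. For a simple eigenvalue the Rayleigh--Schr\"odinger expansion gives
\[
\widetilde{\lambda}(\epsilon)=rL_0^3+\sqrt{\epsilon}\,\frac{\langle\Psi_0^{\ast},\widetilde{\mathcal{L}}_0^{1}\Psi_0\rangle}{\langle\Psi_0^{\ast},\Psi_0\rangle}+O(\epsilon),
\]
where $\Psi_0^{\ast}\equiv 1$ is the constant eigenfunction of $(\widetilde{\mathcal{L}}_0^{0})^{\ast}$ for the same eigenvalue. Since $\Psi_0$ is constant, every derivative in $\widetilde{\mathcal{L}}_0^{1}$ annihilates it and only the zeroth order term survives, $\widetilde{\mathcal{L}}_0^{1}\Psi_0=-b_0(\cdot)$, so the numerator equals $-\int_0^{\pi}b_0(y)\,dy$. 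I would then exploit the bifurcation structure of $\varphi^\epsilon$ from Theorem \ref{theexistKdVBF}: its leading harmonic is mean-free, so the limiting coefficient $b_0$ has zero average and $\int_0^{\pi}b_0(y)\,dy=O(\sqrt{\epsilon})$. Hence the first order contribution is already $O(\epsilon)$, the expansion collapses to $\widetilde{\lambda}(\epsilon)=rL_0^3+O(\epsilon)$, and returning to the original coordinates yields \eqref{50} with $\zeta(\epsilon)=O(\epsilon)$.

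I expect the last refinement to be the main obstacle. The crude amplitude estimate $\varphi^\epsilon=O(\sqrt{\epsilon})$ alone only delivers an $O(\sqrt{\epsilon})$ neighborhood; upgrading to $O(\epsilon)$ hinges on the mean-zero property of the leading Hopf eigenmode---equivalently, on the average of $\varphi^\epsilon$ over one period being $O(\epsilon)$---which is a second order feature of the bifurcation expansion and must be extracted with care from the construction in \cite{FNP24}.
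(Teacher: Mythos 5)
Your proposal follows essentially the same route as the paper: the paper's ``proof'' of this lemma is simply a citation to Lemma 4.8 and Theorem 4.1 of Folino \emph{et al.} \cite{FNP24}, and the argument sketched in the surrounding text is exactly your first two paragraphs --- rescale to the $\epsilon$-independent space $\Ldper([0,\pi])$, use the relative boundedness of $\widetilde{\mathcal{L}}_0^{1}$ and Kato's perturbation theory (Riesz projection over a contour isolating the simple, rightmost unperturbed eigenvalue $rL_0^3$ with constant eigenfunction) to produce a nearby eigenvalue of $\widetilde{\mathcal{L}}_0^{\epsilon}$ with positive real part. The only place you go beyond the present paper is the sharpening of the generic $O(\sqrt{\epsilon})$ localization to the stated radius $\zeta(\epsilon)=O(\epsilon)$; note that the paper's own prose just before the lemma only asserts a ``$\sqrt{\epsilon}$-neighborhood,'' and the $O(\epsilon)$ statement is imported wholesale from \cite{FNP24}. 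Your mechanism for the sharpening is the right one: since the unperturbed eigenfunction is constant, the first-order coefficient reduces to $-\tfrac{1}{\pi}\int_0^\pi b_0(y)\,dy$, the derivative term $\alpha(\varphi^\epsilon)_x$ inside $a_0^\epsilon$ integrates to zero exactly, and the remaining contribution is controlled by the mean of $\varphi^\epsilon$, which is $O(\epsilon)$ because the leading Hopf harmonic is mean-free. As you acknowledge, that last input ($\langle\varphi^\epsilon\rangle=O(\epsilon)$ rather than merely $O(\sqrt{\epsilon})$) is not proved here and must be read off from the bifurcation construction in \cite{FNP24}, so your argument as written only delivers an $O(\sqrt{\epsilon})$ neighborhood unconditionally. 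This gap is harmless for the application in Section \ref{secapplication}, where only $\Re\lambda(\epsilon)>0$ is used, but it should be flagged if one wants the lemma exactly as stated.
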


\begin{proof}
See Lemma 4.8 and the proof of Theorem 4.1 in Folino \emph{et al.} \cite{FNP24}.
\end{proof} 

Therefore, for each $0 < \epsilon \ll 1$ sufficiently small we conclude the existence of an unstable eigenvalue $\lambda
(\epsilon)\in\mathbb{C}$ with $\operatorname{Re}\lambda(\epsilon)>0$ and an
eigenfunction $\Psi^{\epsilon}\in \Htper([0,L_{\epsilon}])$, such
that $\mathcal{L}_{0}^{c(\epsilon)}\Psi^{\epsilon}=\lambda(\epsilon
)\Psi^{\epsilon}$, that is, the spectral instability property holds. Hence,
upon application of Theorem \ref{mainthem}, we have the following result.

\begin{theorem}[orbital instability of small-amplitude periodic waves]
\label{Theorem 6} 
There exists $\overline{\epsilon}_{0}\in\left(  0,\epsilon_{0}\right)  $
sufficiently small such that each periodic wave from Theorem \ref{theexistKdVBF},
$u(x,t)$ $=\varphi^{\epsilon}(x-c(\epsilon)t)$, with $\epsilon\in\left(
0,\overline{\epsilon}_{0}\right)  $, is orbitally unstable in the periodic
space $\Htper([0,L_{\epsilon}])$ under the flow of the KdVBF equation \eqref{KdVBFn}.
\end{theorem}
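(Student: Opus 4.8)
The plan is to recognize the KdVBF equation \eqref{KdVBFn} as a particular instance of the general family \eqref{genKdVBF} and then to verify, one by one, the hypotheses of the orbital instability criterion, Theorem \ref{mainthem}. First I would rewrite \eqref{KdVBFn} in the normalized form \eqref{genKdVBF} by reading off the flux and source functions: since $\alpha u u_x = (\tfrac{\alpha}{2} u^2)_x$, one takes $f(u) = \tfrac{\alpha}{2} u^2$ and $g(u) = ru(1-u) = ru - ru^2$. Both are polynomials, hence of class $C^\infty(\R)$; in particular $f \in C^4(\R)$ and $g \in C^3(\R)$, so the smoothness assumptions of Theorem \ref{mainthem} are satisfied without further work.

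Next I would fix a small $\epsilon \in (0,\epsilon_0)$ and take the corresponding wave $\varphi^\epsilon$ furnished by Theorem \ref{theexistKdVBF}, which is smooth (in particular of class $C^3$) and has finite fundamental period $L_\epsilon > 0$. This supplies the periodic traveling wave required by Theorem \ref{mainthem}, with the ambient Sobolev space taken to be $\Htper([0,L_\epsilon])$. It then only remains to check the spectral instability property, namely that the linearized operator $\cL^{c(\epsilon)}$ on $\Htper([0,L_\epsilon])$ possesses an eigenvalue with positive real part. By Remark \ref{remBlochzero}, this operator coincides with the Bloch operator $\mathcal{L}_0^{c(\epsilon)}$ at $\theta = 0$, for which the discussion culminating in Lemma \ref{Lemma 8} already produces, for all sufficiently small $\epsilon$, an eigenvalue $\lambda(\epsilon) \in \C$ with $\Re\lambda(\epsilon) > 0$ and an eigenfunction $\Psi^\epsilon \in \Htper([0,L_\epsilon])$.

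With all three hypotheses in hand, I would simply invoke Theorem \ref{mainthem} to conclude that $\varphi^\epsilon$ is orbitally unstable in $\Htper([0,L_\epsilon])$ under the flow of \eqref{KdVBFn}. The threshold $\overline{\epsilon}_0 \in (0,\epsilon_0)$ in the statement is precisely the upper bound on $\epsilon$ for which the perturbation argument guarantees that the positive eigenvalue $\widetilde{\lambda}_0 = r$ of the constant-coefficient operator $\widetilde{\mathcal{L}}_\theta^0$ persists, via Lemma \ref{Lemma 8}, as an unstable eigenvalue of the full operator. I do not anticipate a substantive obstacle, since the result is a direct corollary of the apparatus already assembled; the only point demanding care is the identification, through Remark \ref{remBlochzero}, of the $\theta = 0$ Bloch operator $\mathcal{L}_0^{c(\epsilon)}$ whose unstable eigenvalue is furnished above with the linearized operator $\cL^{c(\epsilon)}$ on $\Htper([0,L_\epsilon])$ appearing in the hypotheses of Theorem \ref{mainthem}. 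Concretely, this amounts to the bookkeeping needed to reconcile the coefficient conventions $a_0^\epsilon, a_1^\epsilon$ of \eqref{Blochoper}, inherited from Folino \emph{et al.}, with the coefficients of the linearized operator \eqref{lindoperc}, so as to be certain that the eigenfunction $\Psi^\epsilon$ lies in the domain $D(\cL^{c(\epsilon)}) = \Htper([0,L_\epsilon])$ and that its eigenvalue indeed satisfies $\Re\lambda(\epsilon) > 0$.
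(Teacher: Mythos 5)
Your proposal is correct and follows essentially the same route as the paper: identify $f(u)=\tfrac{\alpha}{2}u^2$, $g(u)=ru(1-u)$ as smooth instances of the general framework, extract the unstable eigenvalue of $\mathcal{L}_0^{c(\epsilon)}$ from the perturbation argument culminating in Lemma \ref{Lemma 8}, identify that operator with the linearized operator of Theorem \ref{mainthem} via the $\theta=0$ Bloch identification, and invoke the criterion. The paper's proof is exactly this one-step application of Theorem \ref{mainthem}, so no further comment is needed.
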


\begin{remark}
It is to be observed that the instability of small amplitude waves is a consequence of the instability of the origin as an equilibrium point of the reaction term: heuristically, the linearized operator around the wave tends to a spectrally unstable constant coefficient operator when the amplitude of the wave tends to zero.
\end{remark}

\section*{Acknowledgements}

A. Naumkina was supported by CONAHCyT (Mexico), through a scholarship for graduate studies (CVU 1081529). The work of R. G. Plaza was partially supported by CONAHCyT, Program ``Ciencia de Frontera'', project CF-2023-G-122.

\def\cprime{$'\!\!$} \def\cprimel{$'\!$}





\end{document}